\documentclass{birkjour}

\usepackage[noadjust]{cite}
\usepackage{xcolor}
\RequirePackage[all]{xy}

\usepackage{amssymb}
\usepackage{amsmath}
\usepackage{dsfont}

\newtheorem{theorem}{Theorem}[section]

\newtheorem{definition}[theorem]{Definition}
\newtheorem{example}[theorem]{Example}

\newtheorem{lemma}[theorem]{Lemma}

\newtheorem{proposition}[theorem]{Proposition}
\newtheorem{remark}[theorem]{Remark}

\numberwithin{equation}{section}
\newcommand{\duer}{\mathbin{\raisebox{3pt}{\varhexstar}\kern-3.70pt{\rule{0.15pt}{4pt}}}\,}

\usepackage{enumitem}  
\usepackage{calc}
\setlist{labelindent=1pt,itemsep=.5em}
\setlist[itemize]{leftmargin=1.2cm}
\setlist[enumerate]{itemindent=0em,leftmargin=1.2cm}
\setlist[enumerate,1]{label={\upshape(\roman*)}}

\usepackage{ifpdf}
\ifpdf
  \usepackage[colorlinks=true,linkcolor=blue,citecolor=red, final,backref=page,hyperindex]{hyperref}
\else
  \usepackage[colorlinks,final,backref=page,hyperindex,hypertex]{hyperref}
\fi

\begin{document}

%
%
%
%
%
%
%
%
%

\title[On Completeness of  $n$-ary Hom-Nambu and Hom-Lie Superalgebras]
 {On Completeness of $n$-ary Hom-Nambu and Hom-Lie Superalgebras}

\author[M. R. Farhangdoost]{Mohammad Reza Farhangdoost}
%
\address{%
Department of Mathematics, College of Sciences, Shiraz University, P.O. Box 71457-44776, Shiraz, Iran}
\email{farhang@shirazu.ac.ir}
%


\author[M. R. Hafezi]{Mohammad Reza Hafezi}
\address{%
Department of Mathematics, College of Sciences, Shiraz University, P.O. Box 71457-44776, Shiraz, Iran}
\email{m.hafez@hafez.shirazu.ac.ir}

\author[S. Silvestrov]{Sergei Silvestrov}
\address{%
Department of Business and Mathematics,
M\"{a}lardalen University, Box 883, 72123 V\"{a}ster{\aa}s, Sweden}
\email{sergei.silvestrov@mdu.se}
\thanks{Corresponding author: Sergei Silvestrov (sergei.silvestrov@mdu.se)}

\subjclass{Primary 17B61, 17D30; Secondary 17B65, 17B68, 17B70}
\keywords{\(n\)-ary Hom-Nambu superalgebras;
\(n\)-Hom-Lie superalgebras;
complete Hom-Nambu superalgebras;
complete Hom-Lie superalgebras;
derivations;
low-dimensional classification}
\date{\today}
\begin{abstract}
We introduce and study completeness for multiplicative $n$-ary Hom-Nambu superalgebras. Because an $n$-ary Hom-Nambu bracket is not necessarily totally super-skew-symmetric, we define its center as the intersection of its positional centers. A multiplicative $n$-ary Hom-Nambu superalgebra is complete when its center is trivial, and every $\alpha^{k+1}$-derivation is inner for all $k \geq 0$. We show that this notion reduces to the usual completeness for multiplicative $n$-Hom-Lie superalgebras. Furthermore, we establish a completeness criterion for surjective brackets with a zero twisting map and provide a complete $n$-ary Hom-Nambu superalgebra that is not an $n$-Hom-Lie superalgebra. We also study the direct sums of complete $n$-Hom-Lie superalgebras and the behavior of centers under twisting. Finally, starting from a multiplicative Hom-Lie superalgebra, we consider recursively induced multiplicative $n$-ary Hom-Nambu superalgebras. When the twisting map is surjective, we prove that triviality of the center is preserved and reflected in this construction.  We also observe that every binary $\alpha^k$-derivation satisfies the corresponding relative $\alpha^k$-derivation identity for the induced bracket. Finally, we determine the complete members in the selected low-dimensional Hom-Lie and $3$-Hom-Lie superalgebra classifications.
\end{abstract}

\maketitle




\section{Introduction}\label{sec1}
Hom-type algebras emerged as natural generalizations and deformations of classical algebraic structures,
where the defining identities are twisted using linear maps. In Hom-algebra structures, the compositions of multilinear operations with twisting linear maps allow non-trivial ways to deform and relate various algebraic structures and lead to the construction of new algebraic systems relevant to geometry, physics, and deformation theory. This concept was introduced by Hartwig, Larsson, and Silvestrov \cite{hartwiglarssonSilvestrov20032006} in 2003 (first appeared as a Lund University preprint in 2003, then as ArXiv.org preprint in August 2004, and finally as a journal publication in 2006), through their study of $\sigma$-derivations and quasi-deformations of Lie algebras,
leading to the notions of quasi-Lie, quasi-Hom-Lie, and Hom-Lie algebras.
Subsequent works developed a comprehensive framework for such Hom-algebra structures as Hom-associative, Hom-Lie, and Hom-Leibniz algebras, Hom-coalgebras, Hom-bialgebras and Hom-Hopf algebras and their deformation theory
\cite{LarssonSilvJA2005:QuasiHomLieCentExt2cocyid,LarssonSilv20042005CA2007:quasidefsl2,MakhlSilv2006GLTA2008-Homalgstr,MakhSilv2007FM2010-NotesformdefHomassHomLie,MakhSil200709HomHopfGLTBSpringer,MakhSilvJAA2010HomAlgHomCoalg,RichardSilvJA2008:quasiLiesigderCtpm1,RichardSilvestrovGLTMPBSpr2009:QuasiLieHomLiesigmaderiv,yau2008,yau2009}.

In Hom-Lie algebras, the bilinear product satisfies the non-twisted skew-symmetry property, as in Lie algebras, whereas the Hom-Lie algebra Jacobi identity has three terms twisted by a single linear map, reducing to the Lie algebra Jacobi identity when the twisting linear map is the identity map.
Color Hom-Lie algebras ($\Gamma$-graded $\epsilon$-Hom-Lie algebras) and more general color quasi-Lie algebras and color quasi-Leibniz algebras (including in particular the Hom-Lie superalgebras for grading by the abelian group $\Gamma=\mathbb{Z}_2$), were introduced by Silvestrov with his PhD students Daniel Larsson and Gunnar Sigurdsson in 2004--2006 in \cite{LarssonSilv2005:QuasiLieAlg,LarssonSilv:GradedquasiLiealg,SigSilv:CzechJP2006:GradedquasiLiealgWitt}, as the Hom-algebra structures naturally encompassing within the same structural framework the quasi-hom-Lie algebra structures of quasi-deformations and discretizations of Lie algebras of vector fields by $\sigma$-derivations from \cite{hartwiglarssonSilvestrov20032006}, and the color Lie algebras or $\Gamma$-graded $\epsilon$-Lie algebras (a direct generalization of Lie superalgebras to general grading studied both in mathematics and in theoretical physics since the 1970s).  In quasi-Lie Hom-algebras, the skew-symmetry and Jacobi identity are twisted by deforming twisting linear maps, and the Jacobi identity in quasi-Lie and quasi-Hom-Lie algebras generally contains six twisted triple bracket terms.

Hom-Lie superalgebras, that is the color Hom-Lie algebras with grading abelian group $\mathbb{Z}_2$ and standard ($\mathbb{Z}_2$-graded parity sign) commutation factor, have been further studied in \cite{ammarmakhlouf2010super}.
They have been investigated from both algebraic and physical perspectives \cite{guan2017,guan2019,fan2021}.
The classification of low-dimensional multiplicative Hom-Lie superalgebras was given in \cite{wang2016},
serving as a foundation for studying derivations, completeness, and structural decomposition
\cite{armakan2020complete,farhangdoost2023}.
Further generalizations include Hom-color and BiHom-Lie superalgebras,
which unify grading and twisting phenomena within a common categorical setting
\cite{ArmakanSilvFarhangTJM2019EnvAlgColHomLie,ArmakanSilvFarhangGMJ2021ExtHomLieColAlg,BakayokoSilvestrov-SPROMS2020-MultnHomLiecolalg, BakayokoSilv-AfM2021-HomleftSymcolHomtridendColAlg}.

The $n$-ary generalizations of Lie algebras, known as Filippov or Nambu-Lie algebras, were initiated by Nambu \cite{nambu1973} in the context of generalized Hamiltonian mechanics, and later formalized by Filippov \cite{filippov1985}.
These algebras are characterized by an $n$-linear skew-symmetric bracket that satisfies the Filippov identity, which is a higher-arity analog of the Jacobi identity.
They have found significant applications in mathematical physics,
especially in the study of Nambu-Poisson manifolds and higher order gauge symmetries
\cite{takhtajan1994,takhtajan1995,daletskii1997,deazcarraga2010,awataLiMincYoneyaJHEP2001-quantnambubr,bai2010,bai2011}.
The structural and representation theories of $n$-Lie algebras, as well as their cohomology and central extensions, have been systematically developed in many subsequent works.

Higher-arity Hom-algebra structures, such as $n$-Hom-Lie algebras and other $n$-ary Hom-algebras of Lie and associative types, were first introduced by Ataguema, Makhlouf and Silvestrov in~\cite{AtMaSi:GenNambuAlg}.
In higher-arity Hom-algebra structures, the defining identities are twisted by one or several linear maps,
yielding a broader class of deformations, such as $n$-ary Hom-algebra structures generalizing the $n$-ary algebras of Lie type including $n$-ary Nambu algebras, $n$-ary Nambu-Lie algebras and $n$-ary Lie algebras, and~$n$-ary algebras of associative type, including $n$-ary totally associative and $n$-ary partially associative algebras.
The structural properties, cohomologies, representations, central extensions and various constructions and examples for $n$-Hom-Nambu-Lie algebras, Hom-superalgebras and color Hom-algebras were considered in
\cite{AbdaouiMabrMakhl2014ArXiv-CohomlHomLeibnAryHomNambuLieSuperAlg,AmmarMabMakh2011-naryhomrep,ArnlindMkhSilvJMP2010:ternaryHomNambuLieINdHomLie,arnlindMakhlSilvJMP2011-ConstrnLienHomNambuLie,BakayokoSilvestrov-SPROMS2020-MultnHomLiecolalg,BakayokoSilv-AfM2021-HomleftSymcolHomtridendColAlg,KitMakSil-GMJ2016nhominduced,YauGenCom,YauHomNambuLie}.

In~\cite{kms:narygenBiHomLieBiHomassalgebras2020}, $n$-ary generalizations of BiHom-Lie and BiHom-associative algebras were considered. Generalized derivations of $n$-BiHom-Lie algebras were studied in~\cite{BenAbdeljElhamdKaygorMakhl201920GenDernBiHomLiealg}, and generalized derivations of multiplicative $n$-ary Hom-$\Omega$ color algebras were studied in~\cite{BeitesKaygorodovPopovBMMSS2019-GenDernaryHomOmegacolalg}.
The cohomology of Hom-Leibniz and $n$-ary Hom-Nambu-Lie superalgebras was considered in
~\cite{AbdaouiMabrMakhl2014ArXiv-CohomlHomLeibnAryHomNambuLieSuperAlg}.
The generalized derivations and Rota--Baxter operators of $n$-ary Hom-Nambu superalgebras were considered in~\cite{MabroukNcibSilvAACA2021-GenDerRotaBaxterOpsnaryHomNambuSuperal}.
The construction of $3$-Hom-Lie algebras based on $\sigma$-derivation and involution was studied in~\cite{AbramovSilvestrov:3homLiealgsigmaderivINvol}.
Multiplicative $n$-Hom-Lie color algebras were considered in
~\cite{BakayokoSilvestrov-SPROMS2020-MultnHomLiecolalg}.

An essential concept in the theories of Lie and Hom-Lie superalgebras is \textit{completeness}.
A Hom-Lie superalgebra is said to be complete if its center is trivial and all $\alpha^{k+1}$-derivations are inner for each $k\geq 0$.
This property, first studied in the classical Lie setting, was extended to Hom-algebra structures in \cite{armakan2020complete}, where the necessary and sufficient conditions for completeness and its decomposition into direct sums of complete ideals were established.
The completeness of BiHom-Lie superalgebras was studied in \cite{fan2021}. Related aspects of derivations and inner derivations for Hom-Lie color algebras were considered in \cite{ArmakanSilvFarhangGMJ2021ExtHomLieColAlg}, whereas further structural properties of Hom-Lie superalgebras were investigated in \cite{guan2019}. In the non-super-skew-symmetric Hom-Nambu setting, the position occupied by the central elements is essential. This motivates the introduction
of positional centers and their intersection as the appropriate notion of
center. For \(n\)-Hom-Lie superalgebras, total super-skew-symmetry makes
all positional centers coincide, so this definition reduces to the usual
one.

Derivations and generalized derivations are another central theme in this context.
They naturally appear in connection with cohomology, deformations, and representations of Hom-type algebras.
The theory of generalized derivations in Lie and $n$-Lie algebras was developed in
\cite{leger2000,zhang2010,chen2013,kaygorodov2012,kaygorodov2016}.
For Hom-type structures, derivations and Rota-Baxter operators were investigated in
\cite{bakayoko2014a,bakayoko2014b,BakayokoSilvestrov-SPROMS2020-MultnHomLiecolalg,BeitesKaygorodovPopovBMMSS2019-GenDernaryHomOmegacolalg,
BenAbdeljElhamdKaygorMakhl201920GenDernBiHomLiealg,
MabroukNcibSilvAACA2021-GenDerRotaBaxterOpsnaryHomNambuSuperal}, where the relationships between generalized derivations, Hom-Nambu systems, and Rota-Baxter operators are clarified.
These developments are further connected to Hom-dendriform and Hom-pre-Lie algebras
\cite{ma2017a,ma2017b} and to color Hom-Poisson algebras \cite{bakayoko2014a,bakayoko2014b}.

A particularly transparent completeness criterion arises when the
twisting map is zero and the bracket is surjective. This criterion also
provides complete \(n\)-ary Hom-Nambu superalgebras that are not
\(n\)-Hom-Lie superalgebras. We also examine completeness under direct
decompositions into Hom-ideals.
The interplay between derivations, centers, and twisting maps naturally motivates the study of completeness for \(n\)-ary Hom-Nambu and  $n$-ary Hom-Lie superalgebras.
Although completeness has been thoroughly explored in the binary case, its behavior in higher-arity settings remains largely unexplored.
The goal of this article is to investigate the structural stability of completeness
under the transition from binary to $n$-ary Hom-Lie superalgebras, building on the results and constructions in \cite{arnlindMakhlSilvJMP2011-ConstrnLienHomNambuLie,armakan2020complete,farhangdoost2023,MabroukNcibSilvAACA2021-GenDerRotaBaxterOpsnaryHomNambuSuperal}.

The recursive higher-arity construction considered in this paper
generally produces \(n\)-ary Hom-Nambu superalgebras rather than
\(n\)-Hom-Lie superalgebras. For these induced brackets, we study the
behavior of positional centers and establish a relative derivation identity,
without claiming a complete correspondence between the binary and
higher-arity derivation spaces. The general results are complemented by
explicit computations in selected low-dimensional classifications.

The remainder of this paper is organized as follows. Section~\ref{sec2-Prelim} reviews the fundamental concepts of Hom-Lie and \(n\)-Hom-Lie superalgebras, along with examples. Section~\ref{sec3-ComplnaryHomNambuSuperalg} develops the structural framework of Hom-subalgebras, Hom-ideals, centers, and centralizers, and introduces the notion of completeness in the \(n\)-ary setting. The behavior of the completeness property under suitable morphisms and twisting operations is considered, several results on twisting, direct sums, and recursively induced Hom-Nambu structures are presented, and the implications for derivation algebras and the decomposition of complete \(n\)-Hom-Lie superalgebras are discussed. In Section~\ref{sec4-ComplLowdimnHomLieSuperalg}, the complete low-dimensional Hom-Lie superalgebras are determined by computing their centers, derivations, and inner derivations, and the $2$-dimensional $3$-Hom-Lie case is classified and analyzed for completeness. Finally, Section~\ref{sec5-Conclusions} presents the conclusions and some future directions.

\section{Preliminaries}\label{sec2-Prelim}
In this section, we recall some basic notions and results that will be used throughout this paper.
For completeness, we briefly review the fundamental definitions of Hom-Lie superalgebras, their derivations, centroids and ideals.
We also recall related constructions that will be extended to the \( n \)-ary case.
Unless otherwise stated, all linear spaces are considered over a field \( \mathbb{K} \) of characteristic zero.
\begin{definition} [\cite{MabroukNcibSilvAACA2021-GenDerRotaBaxterOpsnaryHomNambuSuperal}]	A \emph{Hom-Lie superalgebra} is a $\mathbb{Z}_2$-graded linear space
$	\mathfrak{g} = \mathfrak{g}_{\bar{0}} \oplus \mathfrak{g}_{\bar{1}}$
	over a field $\mathbb{K}$, equipped with a bilinear map
		\(
		[\cdot,\cdot] : \mathfrak{g} \times \mathfrak{g} \longrightarrow \mathfrak{g}\),
		such that \([\mathfrak{g}_{\bar i},\mathfrak{g}_{\bar j}] \subseteq \mathfrak{g}_{\bar i+\bar j}\) for all \(\bar i, \bar j \in \mathbb{Z}_2
		\) {\rm (}"even" bilinear map{\rm )},
	 and a linear map
		\(
		\alpha : \mathfrak{g} \longrightarrow \mathfrak{g}\)
		such that \(\alpha(\mathfrak{g}_{\bar i}) \subseteq \mathfrak{g}_{\bar i}\) for all \(\bar i \in \mathbb{Z}_2
		\) {\rm (}"even" linear map{\rm )},
	satisfying the following identities for all $x,y,z \in H(\mathfrak{g}) = \mathfrak{g}_{\bar 0}\cup \mathfrak{g}_{\bar 1}$ {\rm (}homogeneous $x,y,z\in \mathfrak{g}${\rm )}{\rm :}
	\begin{align}
		& [x,y] = -(-1)^{|x||y|}[y,x], & \textnormal{(super skew-symmetry identity)} \label{superskewsymid}\\
		&\sum_{\circlearrowleft_{x,y,z}} (-1)^{|x||z|}\,[\alpha(x),[y,z]] = 0 , \label{superHomJacobiid}
		& \textnormal{(super Hom-Jacobi identity)}
	\end{align}
	where $|x| \in \mathbb{Z}_2$ denotes the parity {\rm (}grading degree{\rm )} of $x$, and
	$\sum\limits_{\circlearrowleft_{x,y,z}}$ denotes the summation over the cyclically permuted triple $x,y,z$.
\end{definition}

\begin{remark}
An ordinary Lie superalgebra is a Hom-Lie superalgebra with $\alpha = \mathrm{id}$.
Hom-Lie algebras are Hom-Lie superalgebras with $\mathfrak{g}_{\bar 1}={0}$.
The ordinary Lie algebras are Hom-Lie algebras with $\alpha = \mathrm{id}$, or equivalently, Hom-Lie superalgebras with $\alpha = \mathrm{id}$ and $\mathfrak{g}_{\bar 1}={0}$.
\end{remark}

\begin{definition}
A Hom-Lie superalgebra $(\mathfrak{g}, [\cdot,\cdot], \alpha)$ is called
multiplicative if
\[
\alpha([x,y]) = [\alpha(x), \alpha(y)], \quad \forall\, x,y \in \mathfrak{g}.
\]
\end{definition}

\begin{remark}
Multiplicativity, while useful in some special constructions such as cohomology and in the construction of a special type of multiplicative Hom-Lie superalgebras from ordinary Lie superalgebras using the composition of the Lie superalgebra product with the multiplicative twisting map, is at the same time a rather restrictive condition in general, singling out a special subclass of Hom-Lie superalgebras.
\end{remark}

\begin{remark} In any Hom-Lie superalgebra, in terms of the linear maps
$\operatorname{ad}_x \in \operatorname{Lin}(\mathfrak{g})$ defined for each $x \in \mathfrak{g}$ by
\[
\operatorname{ad}_x(y) = [x,y], \quad \forall\, y \in \mathfrak{g},
\]
using super skew-symmetry \eqref{superskewsymid}, the super-Hom-Jacobi identity \eqref{superHomJacobiid} can be rewritten as
\[
\operatorname{ad}_{[x,y]}(\alpha(z)) \;=\;
\operatorname{ad}_{\alpha(x)} \circ \operatorname{ad}_y (z)
\;-\; (-1)^{|x||y|}\, \operatorname{ad}_{\alpha(y)} \circ \operatorname{ad}_x (z).
\]
for all \( x , y , z \in H(\mathfrak{g}). \)
\end{remark}

Before proceeding, we illustrate the definitions using a simple example.

\begin{example} \label{osposp} Let \(\mathfrak{osp}(1,2)\) denote the complex orthosymplectic Lie
superalgebra
\(
\mathfrak{osp}(1,2)=V_{\bar 0}\oplus V_{\bar 1},
\)
where the even part \(V_{\bar 0}\) has basis \(\{H,X,Y\}\), with
\[
H=
\begin{pmatrix}
1&0&0\\
0&0&0\\
0&0&-1
\end{pmatrix},
\qquad
X=
\begin{pmatrix}
0&0&1\\
0&0&0\\
0&0&0
\end{pmatrix},
\qquad
Y=
\begin{pmatrix}
0&0&0\\
0&0&0\\
1&0&0
\end{pmatrix},
\]
and the odd part \(V_{\bar 1}\) has basis \(\{F,G\}\), with
\[
F=
\begin{pmatrix}
0&0&0\\
1&0&0\\
0&1&0
\end{pmatrix},
\qquad
G=
\begin{pmatrix}
0&1&0\\
0&0&-1\\
0&0&0
\end{pmatrix}.
\]
The Lie superbracket is the standard matrix supercommutator
\[
[A,B]=AB-(-1)^{|A||B|}BA
\]
for homogeneous elements \(A,B\in\mathfrak{osp}(1,2)\), extended by
bilinearity.
For \(\lambda\in\mathbb{R}^{*}=\mathbb{R}\setminus\{0\}\), define the
even linear map
\(
\alpha_\lambda:
\mathfrak{osp}(1,2)\longrightarrow\mathfrak{osp}(1,2)
\)
by
\[
\begin{aligned}
\alpha_\lambda(H)&=H,
&
\alpha_\lambda(X)&=\lambda^2X,
&
\alpha_\lambda(Y)&=\frac{1}{\lambda^2}Y,\\
\alpha_\lambda(F)&=\frac{1}{\lambda}F,
&
\alpha_\lambda(G)&=\lambda G.
\end{aligned}
\]
Define the Hom-Lie superbracket by
\[
[x,y]_{\alpha_\lambda}
=
\alpha_\lambda([x,y]),
\qquad
\forall\,x,y\in\mathfrak{osp}(1,2).
\]
Its nonzero defining relations are
\[
\begin{aligned} \relax
[H,X]_{\alpha_\lambda}
&=2\lambda^2X,
&
[H,Y]_{\alpha_\lambda}
&=-\frac{2}{\lambda^2}Y,
&
[X,Y]_{\alpha_\lambda}
&=H,\\
[Y,G]_{\alpha_\lambda}
&=\frac{1}{\lambda}F,
&
[X,F]_{\alpha_\lambda}
&=\lambda G,
&
[H,F]_{\alpha_\lambda}
&=-\frac{1}{\lambda}F,\\
[H,G]_{\alpha_\lambda}
&=\lambda G,
&
[G,F]_{\alpha_\lambda}
&=H,
&
[G,G]_{\alpha_\lambda}
&=-2\lambda^2X,\\
[F,F]_{\alpha_\lambda}
&=\frac{2}{\lambda^2}Y.
\end{aligned}
\]
The remaining products are determined by super-skew-symmetry and
bilinearity. Hence,
\(
\mathfrak{osp}(1,2)_\lambda
=
\bigl(
\mathfrak{osp}(1,2),
[\cdot,\cdot]_{\alpha_\lambda},
\alpha_\lambda
\bigr)
\)
is a multiplicative Hom-Lie superalgebra.

For \(\lambda=1\), we have
\(
\alpha_1=\operatorname{id}_{\mathfrak{osp}(1,2)},
\)
and therefore
\(\mathfrak{osp}(1,2)_1\) coincides with the original Lie superalgebra \(\mathfrak{osp}(1,2)\).
\end{example}

The above example shows how a classical Lie superalgebra can be deformed into a family of Hom-Lie superalgebras via an endomorphism.
We now provide the classification of all $2$-dimensional multiplicative Hom-Lie superalgebras
up to isomorphism.

\begin{proposition}
Let $\lambda,\mu\in\mathbb{R}^{*}$. Then,
\(
\mathfrak{osp}(1,2)_\lambda
\cong
\mathfrak{osp}(1,2)_\mu
\)
as Hom-Lie superalgebras if and only if
\(
\mu=\lambda
\ \text{or}\
\mu=\lambda^{-1}.
\)
\end{proposition}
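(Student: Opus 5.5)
The plan is to reduce the statement to conjugacy of the twisting maps inside the automorphism group of the underlying Lie superalgebra $\mathfrak{osp}(1,2)$. Then I read off the necessary condition from the spectrum of $\alpha_\lambda$ on the odd part, and prove sufficiency with an explicit automorphism.

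\emph{Step 1 (reduction).} An isomorphism of Hom-Lie superalgebras $\varphi:\mathfrak{osp}(1,2)_\lambda\to\mathfrak{osp}(1,2)_\mu$ is an even linear bijection with $\varphi\circ\alpha_\lambda=\alpha_\mu\circ\varphi$ and $\varphi([x,y]_{\alpha_\lambda})=[\varphi(x),\varphi(y)]_{\alpha_\mu}$. Since $[x,y]_{\alpha_\lambda}=\alpha_\lambda([x,y])$, the left-hand side equals $\varphi\alpha_\lambda([x,y])=\alpha_\mu\varphi([x,y])$. The right-hand side is $\alpha_\mu([\varphi(x),\varphi(y)])$. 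Because $\alpha_\mu$ is diagonal with nonzero eigenvalues, it is invertible, and cancelling it gives $\varphi([x,y])=[\varphi(x),\varphi(y)]$. Conversely, any automorphism $\varphi$ of the Lie superalgebra $\mathfrak{osp}(1,2)$ with $\varphi\alpha_\lambda\varphi^{-1}=\alpha_\mu$ is a Hom-isomorphism, by reading the same computation backwards. So the two structures are isomorphic if and only if $\alpha_\mu=\varphi\alpha_\lambda\varphi^{-1}$ for some even automorphism $\varphi$ of $\mathfrak{osp}(1,2)$.

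\emph{Step 2 (necessity).} Such a $\varphi$ is even, so it maps $V_{\bar 1}$ onto $V_{\bar 1}$. Hence $\alpha_\lambda|_{V_{\bar 1}}$ and $\alpha_\mu|_{V_{\bar 1}}$ are conjugate and have the same eigenvalues with multiplicity: $\{\lambda,\lambda^{-1}\}=\{\mu,\mu^{-1}\}$. This forces $\mu=\lambda$ or $\mu=\lambda^{-1}$. The even part gives the weaker condition $\{1,\lambda^{2},\lambda^{-2}\}=\{1,\mu^2,\mu^{-2}\}$, which would allow $\mu=-\lambda$. So it is essential to use the odd eigenvalues here.

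\emph{Step 3 (sufficiency).} For $\mu=\lambda$, take $\varphi=\mathrm{id}$. For $\mu=\lambda^{-1}$, I propose the even map
\[
\varphi(H)=-H,\quad \varphi(X)=-Y,\quad \varphi(Y)=-X,\quad \varphi(F)=-G,\quad \varphi(G)=F,
\]
which is the form $\varphi(F)=cG$, $\varphi(G)=dF$ with $d=1$ and $c=-1/d$. I will verify on the basic brackets of $\mathfrak{osp}(1,2)$ (the $\lambda=1$ relations in Example~\ref{osposp}) that $\varphi$ is an automorphism. The sample checks work: $[\varphi X,\varphi Y]=[Y,X]=-H=\varphi(H)$; $[\varphi H,\varphi X]=[H,Y]=-2Y=\varphi(2X)$; $[\varphi G,\varphi G]=[F,F]=2Y=\varphi(-2X)$; $[\varphi G,\varphi F]=-[F,G]=-H=\varphi(H)$. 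The mixed even--odd brackets remain to be checked, and the signs $c,d$ may need adjusting, subject to $d^2=1$ and $cd=-1$. I will then check $\varphi\alpha_\lambda=\alpha_{\lambda^{-1}}\varphi$ on the basis; for example, $\varphi\alpha_\lambda(X)=-\lambda^2Y=\alpha_{\lambda^{-1}}(-Y)$ and $\varphi\alpha_\lambda(F)=-\lambda^{-1}G=\alpha_{\lambda^{-1}}(-G)$.

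The main obstacle I expect is getting the signs in $\varphi$ right so that all brackets are preserved. A naive choice such as $X\mapsto Y$ leads to $d^2=-1$, which is not available over $\mathbb{R}$. I will therefore follow the sign-tracking above, and fall back on conjugation by the antidiagonal supermatrix if needed.
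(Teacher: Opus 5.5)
Your proposal is correct and follows essentially the paper's argument. Necessity comes from the similarity of $\alpha_\lambda|_{V_{\bar 1}}$ and $\alpha_\mu|_{V_{\bar 1}}$ (equal eigenvalues, i.e.\ equal characteristic polynomials). Sufficiency comes from an explicit even automorphism with $X\mapsto -Y$, $Y\mapsto -X$, $H\mapsto -H$ and $F,G$ swapped up to sign, which intertwines $\alpha_\lambda$ and $\alpha_{\lambda^{-1}}$.

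Your map ($F\mapsto -G$, $G\mapsto F$) differs from the paper's ($F\mapsto G$, $G\mapsto -F$) by composition with the parity automorphism, so it is equally valid. The mixed brackets you left open do check out: the full set of constraints reduces to $d^2=1$ and $c=-d$, and your choice $d=1$, $c=-1$ satisfies both.
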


\begin{proof}
Assume that
\(
\varphi:
\mathfrak{osp}(1,2)_\lambda
\longrightarrow
\mathfrak{osp}(1,2)_\mu
\)
is an isomorphism of Hom-Lie superalgebras. Then,
\(
\varphi\circ\alpha_\lambda
=
\alpha_\mu\circ\varphi.
\)
Since $\varphi$ is even, it restricts to an isomorphism of the odd
parts. Relative to the basis $\{F,G\}$, the restrictions of the
twisting maps are
\[
\left.\alpha_\lambda\right|_{V_{\bar 1}}
=
\begin{pmatrix}
\lambda^{-1}&0\\
0&\lambda
\end{pmatrix},
\qquad
\left.\alpha_\mu\right|_{V_{\bar 1}}
=
\begin{pmatrix}
\mu^{-1}&0\\
0&\mu
\end{pmatrix}.
\]
Hence, these two matrices are similar and therefore have the same
characteristic polynomials. Consequently,
\[
(t-\lambda^{-1})(t-\lambda)
=
(t-\mu^{-1})(t-\mu),
\]
which implies
\(
\lambda+\lambda^{-1}
=
\mu+\mu^{-1}.
\)
Equivalently,
\(
(\lambda-\mu)(\lambda\mu-1)=0.
\)
Thus,
\(
\mu=\lambda
\) or
\(\mu=\lambda^{-1}.
\)

Conversely, if $\mu=\lambda$, then the identity map is an isomorphism.
Suppose that $\mu=\lambda^{-1}$ and define the even linear map
$\varphi:\mathfrak{osp}(1,2)\longrightarrow\mathfrak{osp}(1,2)$ by
\[
\begin{aligned}
\varphi(H)&=-H,
&
\varphi(X)&=-Y,
&
\varphi(Y)&=-X,\\
\varphi(F)&=G,
&
\varphi(G)&=-F.
\end{aligned}
\]
A direct verification using the defining relations of
$\mathfrak{osp}(1,2)$ shows that $\varphi$ is a Lie superalgebra
automorphism. Furthermore,
\[
\varphi\circ\alpha_\lambda
=
\alpha_{\lambda^{-1}}\circ\varphi.
\]
Since
\(
[x,y]_{\alpha_\lambda}
=
\alpha_\lambda([x,y]),
\)
for all $x,y\in\mathfrak{osp}(1,2)$, it follows that
\[
\begin{aligned}
\varphi([x,y]_{\alpha_\lambda})
&=
\varphi\bigl(\alpha_\lambda([x,y])\bigr) =
\alpha_{\lambda^{-1}}\bigl(\varphi([x,y])\bigr)\\
&=
\alpha_{\lambda^{-1}}
\bigl([\varphi(x),\varphi(y)]\bigr) =
[\varphi(x),\varphi(y)]_{\alpha_{\lambda^{-1}}}.
\end{aligned}
\]
Therefore, $\varphi$ is an isomorphism from
$\mathfrak{osp}(1,2)_\lambda$ onto
$\mathfrak{osp}(1,2)_{\lambda^{-1}}$.
\end{proof}

\begin{theorem}[\cite{wang2016}] \label{2dimen}
Every $2$-dimensional complex multiplicative Hom-Lie superalgebra is isomorphic to one of the following
non-isomorphic Hom-Lie superalgebras. Each algebra is denoted by
$\mathfrak{g}^k_{i;j}$, where $i = \dim \mathfrak{g}_{\bar 0}$,
$j = \dim \mathfrak{g}_{\bar 1}$, and $k$ is an indexing parameter.
\begin{enumerate}[label=\textup{\arabic*.}]
	\item $\mathfrak{g}^1_{0;2}:$ an abelian Hom-Lie superalgebra.
	
	\item $\mathfrak{g}^2_{1;1}:$ an abelian Hom-Lie superalgebra.
	
	\item $\mathfrak{g}^{3,b}_{1;1}:$
	\(
	[e_0,e_1] = e_1,  [e_1,e_1] = 0,
	\alpha = \begin{pmatrix}
		1 & 0 \\[4pt]
		0 & b
	\end{pmatrix}.
	\)
	$\mathfrak{g}^{3,b}_{1;1}$ is isomorphic to $\mathfrak{g}^{3,b'}_{1;1}$
	if and only if $b = b'$.
	
	\item $\mathfrak{g}^{4,a}_{1;1}:$
	\(
	[e_0,e_1] = e_1,  [e_1,e_1] = 0, \quad
	\alpha = \begin{pmatrix}
		a & 0 \\[4pt]
		0 & 0
	\end{pmatrix},  a \neq 0,1.
	\)
	$\mathfrak{g}^{4,a}_{1;1}$ is isomorphic to $\mathfrak{g}^{4,a'}_{1;1}$
	if and only if $a = a'$.
	
	\item $\mathfrak{g}^{5,b}_{1;1}:$
	\(
	[e_0,e_1] = 0,  [e_1,e_1] = e_0,
	\alpha = \begin{pmatrix}
		b^2 & 0 \\[4pt]
		0 & b
	\end{pmatrix},  b \neq 0.
	\)
	$\mathfrak{g}^{5,b}_{1;1}$ is isomorphic to $\mathfrak{g}^{5,b'}_{1;1}$
	if and only if $b = b'$.
\end{enumerate}
\end{theorem}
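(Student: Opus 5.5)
We classify by the superdimension of $\mathfrak{g}=\mathfrak{g}_{\bar 0}\oplus\mathfrak{g}_{\bar 1}$, which is invariant under isomorphism because isomorphisms are even. The possibilities are $(2;0)$, $(1;1)$ and $(0;2)$. The purely even case $(2;0)$ concerns Hom-Lie algebras and falls outside the list, so we concentrate on $(1;1)$ and $(0;2)$.

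\emph{Case $(0;2)$.} Here $[\mathfrak{g}_{\bar 1},\mathfrak{g}_{\bar 1}]\subseteq\mathfrak{g}_{\bar 0}=0$, so the bracket vanishes. This gives the abelian algebra $\mathfrak{g}^1_{0;2}$, with arbitrary $\alpha$. In the abelian situation the twisting map is regarded only up to conjugacy, and the statement lists a single family $\mathfrak{g}^1_{0;2}$.

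\emph{Case $(1;1)$.} Take $e_0$ even and $e_1$ odd. Evenness forces $\alpha=\mathrm{diag}(a,b)$. Super-skew-symmetry kills $[e_0,e_0]$, so the bracket is determined by
\[
[e_0,e_1]=\lambda e_1,\qquad [e_1,e_1]=\mu e_0 .
\]
Next we impose the super Hom-Jacobi identity. On $(e_0,e_1,e_1)$ it yields a relation of the form $\lambda\mu(\dots)=0$. On $(e_1,e_1,e_1)$ it gives $3[\alpha(e_1),[e_1,e_1]]=3b\mu[e_1,e_0]=-3b\lambda\mu e_1=0$.

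Then we impose multiplicativity:
\[
\alpha[e_0,e_1]=[\alpha e_0,\alpha e_1]\ \text{ gives }\ \lambda b=\lambda ab,\qquad
\alpha[e_1,e_1]=[\alpha e_1,\alpha e_1]\ \text{ gives }\ \mu a=\mu b^2 .
\]
The cases split as follows.
\begin{itemize}
\item If $\lambda=\mu=0$ we obtain the abelian $\mathfrak{g}^2_{1;1}$.
\item If $\lambda\neq0$, then $\mu=0$ or $b=0$ from Jacobi; rescaling $e_0$ normalizes $\lambda=1$. Multiplicativity gives $b(1-a)=0$, so either $a=1$ with $b$ free, which is $\mathfrak{g}^{3,b}_{1;1}$, or $b=0$ with $a\ne1$. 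We must also show that $\mu\neq0$ is impossible when $\lambda\neq0$; the Jacobi relation on $(e_0,e_1,e_1)$ should force this. The case $a=0$ with $b=0$ needs care, since $a\neq0$ is excluded in $\mathfrak{g}^{4,a}_{1;1}$.
\item If $\lambda=0$ and $\mu\ne0$, rescale $e_1$ to make $\mu=1$. Multiplicativity gives $a=b^2$, which is $\mathfrak{g}^{5,b}_{1;1}$; $b\neq0$ presumably comes from the paper's convention.
\end{itemize}

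Finally we establish non-isomorphism. An isomorphism is even, so it is $\mathrm{diag}(p,q)$, and it conjugates $\alpha$. Since $\alpha$ is diagonal, the eigenvalue on each graded piece is an invariant, which distinguishes the parameters $a,b$. The bracket type separates the families: whether $[\mathfrak{g}_{\bar0},\mathfrak{g}_{\bar1}]=0$, and whether $[\mathfrak{g}_{\bar1},\mathfrak{g}_{\bar1}]=0$.

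The main obstacle is the bookkeeping in the degenerate subcases: the precise Jacobi constraints when $\lambda\mu\ne0$, the boundary values $a=0$ and $b=0$, and reconciling these with the exact exclusions stated in the theorem. Failing a clean derivation, we would defer to the computation in \cite{wang2016}.
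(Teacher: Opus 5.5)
Your reduction to $[e_0,e_1]=\lambda e_1$, $[e_1,e_1]=\mu e_0$, $\alpha=\mathrm{diag}(a,b)$ is the right direct approach. The paper gives no proof of its own and simply imports the result from \cite{wang2016}.

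The constraints can in fact be pinned down completely. The Jacobi triples $(e_0,e_1,e_1)$ and $(e_1,e_1,e_1)$ both give only $b\lambda\mu=0$, and the triples $(e_0,e_0,e_0)$, $(e_0,e_0,e_1)$ hold automatically. Multiplicativity gives $\lambda b(1-a)=0$ and $\mu(a-b^2)=0$.

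\textbf{The gap.} The problem is how you propose to dispose of the degenerate cases. Your expectation that the Jacobi relation on $(e_0,e_1,e_1)$ forces $\mu=0$ when $\lambda\neq0$ is false: it forces $b=0$, and then $\mu(a-b^2)=0$ forces $a=0$. So $\lambda\mu\neq0$ does occur, precisely when $\alpha=0$. After the rescaling $\lambda\mapsto p\lambda$, $\mu\mapsto q^2\mu/p$ induced by $\mathrm{diag}(p,q)$, it yields the multiplicative Hom-Lie superalgebra $[e_0,e_1]=e_1$, $[e_1,e_1]=e_0$, $\alpha=0$, which appears on no line of the list.

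The boundary cases you postpone are also genuine algebras. Both $\lambda=1$, $\mu=0$, $a=b=0$ and $\lambda=0$, $\mu=1$, $a=b=0$ satisfy every constraint. They are excluded by $a\neq0$ in family 4 and $b\neq0$ in family 5. Since even isomorphisms here are diagonal, they preserve $\alpha$ exactly, so neither can be isomorphic to a listed algebra. Phrases like ``should force this'' and ``presumably the paper's convention'', or deferring to \cite{wang2016}, do not close these cases. Read literally, the stated list is not exhaustive.

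\textbf{What is missing.} You need to make explicit the tacit hypotheses under which the list is correct:
\begin{enumerate}
\item the nonabelian algebras are taken with $\alpha\neq0$, which is exactly what the exclusions $a\neq0$ and $b\neq0$ reflect;
\item superdimension $(2;0)$ is excluded;
\item each ``abelian'' entry means the zero bracket with an arbitrary even $\alpha$. Its isomorphism classes are the conjugacy classes of $\alpha$ on the graded pieces, so these entries are families, not single algebras.
\end{enumerate}

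With $\alpha\neq0$ imposed, your analysis closes at once:
\begin{enumerate}
\item $\lambda\mu\neq0$ would force $\alpha=0$, so it cannot occur;
\item $\lambda\neq0=\mu$ gives either $a=1$ with $b$ free (family 3) or $b=0$ with $a\neq0,1$ (family 4);
\item $\lambda=0\neq\mu$ gives $a=b^2$ with $b\neq0$ (family 5).
\end{enumerate}
Your invariants then give all the non-isomorphism claims: the scalars $\alpha|_{\mathfrak{g}_{\bar0}}$ and $\alpha|_{\mathfrak{g}_{\bar1}}$, together with which of the two brackets vanish.

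Also fix a small sign slip. For odd $x=z$ the factor $(-1)^{|x||z|}$ equals $-1$, so $(e_1,e_1,e_1)$ gives $3b\lambda\mu e_1=0$. The conclusion $b\lambda\mu=0$ is unchanged.
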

In what follows, we extend the binary Hom-Lie superalgebra structure to the $n$-ary setting.
We recall some fundamental notions of $n$-ary Hom-Nambu and Hom-Lie superalgebras that generalize classical Nambu and Filippov algebras to the superalgebraic Hom-algebra framework.
Throughout this section, $N = N_{\bar 0} \oplus N_{\bar 1}$ denotes a $\mathbb{Z}_2$-graded linear space
over a field $\mathbb{K}$ of characteristic zero.

\begin{definition}[\cite{MabroukNcibSilvAACA2021-GenDerRotaBaxterOpsnaryHomNambuSuperal}]
An $n$-ary Hom-Nambu superalgebra $(N, [\cdot, \ldots, \cdot], \tilde{\alpha})$
is a triple consisting of a linear superspace $N = N_{\bar 0} \oplus N_{\bar 1}$,
an even $n$-linear map
\(
[\cdot, \ldots, \cdot] : N^n \to N
\)
such that
\(
[N_{\bar{k}_1},\ldots,N_{\bar{k}_n}]
\subseteq
N_{\bar{k}_1+\cdots+\bar{k}_n},
\qquad
\forall\,\bar{k}_1,\ldots,\bar{k}_n\in\mathbb{Z}_2,
\)
and a family
\(
\widetilde{\alpha}
=
(\alpha_i)_{1\leq i\leq n-1}
\)
of even linear maps
\(
\alpha_i:N\to N
\)
satisfying
\begin{equation}
	\begin{aligned}
		& [\alpha_1(x_1), \ldots, \alpha_{n-1}(x_{n-1}), [y_1, \ldots, y_n]] \\ \label{shni}
		& = \sum_{i=1}^n (-1)^{|X| \, |Y|_{i-1}}
		[\alpha_1(y_1), \ldots, \alpha_{i-1}(y_{i-1}),
		[x_1, \ldots, x_{n-1}, y_i], \\
		& \hspace{6cm} \alpha_i(y_{i+1}), \ldots, \alpha_{n-1}(y_n)],
	\end{aligned}
\end{equation}
for all $(x_1, \ldots, x_{n-1}) \in H(N)^{n-1}$ and $(y_1, \ldots, y_n) \in H(N)^n$,
where
\[
|X| = \sum_{k=1}^{n-1} |x_k|,
\qquad
|Y|_{i-1} = \sum_{k=1}^{i-1} |y_k|.
\]

Identity \eqref{shni} is called the \emph{super-Hom-Nambu identity}.
\end{definition}

\begin{definition}
An $n$-ary Hom-Nambu superalgebra $(N, [\cdot, \ldots, \cdot], \tilde{\alpha})$
is called an \emph{$n$-Hom-Lie superalgebra} if its bracket
$[\cdot, \ldots, \cdot]$ is super-skewsymmetric, that is,
for all $1 \leq i \leq n-1$ one has
\begin{equation}
	[x_1, \ldots, x_i, x_{i+1}, \ldots, x_n]
	= -(-1)^{|x_i||x_{i+1}|}
	[x_1, \ldots, x_{i+1}, x_i, \ldots, x_n].
\end{equation}

\end{definition}

\begin{remark}
When the maps $(\alpha_i)_{1 \leq i \leq n-1}$ are all identity maps, one recovers the classical $n$-ary Nambu superalgebras.
\end{remark}
\begin{definition}
A multiplicative $n$-ary Hom-Nambu superalgebra (resp. multiplicative
$n$-Hom-Lie superalgebra) is an $n$-ary Hom-Nambu superalgebra (resp.
$n$-Hom-Lie superalgebra) $(N, [\cdot, \ldots, \cdot], \tilde{\alpha})$ with
$\tilde{\alpha} = (\alpha_i)_{1 \leq i \leq n-1}$ such that
$\alpha_1 = \cdots = \alpha_{n-1} = \alpha$ and satisfying
\begin{equation}
	\alpha([x_1, \ldots, x_n]) = [\alpha(x_1), \ldots, \alpha(x_n)],
	\quad \forall\, x_1, \ldots, x_n \in N.
\end{equation}

For brevity, we denote the multiplicative $n$-ary Hom-Nambu superalgebra as
$(N, [\cdot, \ldots, \cdot], \alpha)$, where $\alpha : N \to N$ is an even linear map.
By a slight abuse of notation, an element $X \in N^n$ will be written as
$X = (x_1, \ldots, x_n)$ and $\alpha(X)$ denotes
$(\alpha(x_1), \ldots, \alpha(x_n))$.

\end{definition}

\begin{definition}
Let $(N,[\cdot,\ldots,\cdot],\widetilde{\alpha})$ and
$(N',[\cdot,\ldots,\cdot]',\widetilde{\alpha}')$ be two
$n$-ary Hom-Nambu superalgebras, where
\(
\widetilde{\alpha}=(\alpha_i)_{1\leq i\leq n-1},
\widetilde{\alpha}'=(\alpha_i')_{1\leq i\leq n-1}.
\)
An even linear map
\(
f:N\longrightarrow N'
\)
is called a morphism of \(n\)-ary Hom-Nambu superalgebras if
\(
f([x_1,\ldots,x_n])
=
[f(x_1),\ldots,f(x_n)]'
\)
for all homogeneous elements \(x_1,\ldots,x_n\in N\), and
\(
f\circ\alpha_i=\alpha_i'\circ f
\)
for $1\leq i\leq n-1$.
A morphism \(f:N\to N'\) is called an isomorphism if it is a bijection. In this case, the two \(n\)-ary Hom-Nambu superalgebras are said to be isomorphic.
A morphism from an \(n\)-ary Hom-Nambu
superalgebra to itself is called an endomorphism. A bijective
endomorphism is called an automorphism.
\end{definition}

Let \((N,[\cdot,\ldots,\cdot],\widetilde{\alpha})\) and \((N',[\cdot,\ldots,\cdot]',\widetilde{\alpha}')\) be two \(n\)-ary Hom-Nambu superalgebras, where \(\widetilde{\alpha}=(\alpha_i)_{1\leq i\leq n-1}\) and \(\widetilde{\alpha}'=(\alpha_i')_{1\leq i\leq n-1}\). An even linear map \(f:N\longrightarrow N'\) is called a (morphism of \(n\)-ary Hom-Nambu superalgebras) if \(f([x_1,\ldots,x_n]) = [f(x_1),\ldots,f(x_n)]'\) for all homogeneous elements \(x_1,\ldots,x_n\in N\), and \(f\circ\alpha_i=\alpha_i'\circ f\) for \(1\leq i\leq n-1\). A morphism \(f:N\to N'\) is called an isomorphism if it is a bijection. In this case, the two \(n\)-ary Hom-Nambu superalgebras are said to be isomorphic.
Morphisms from \(n\)-ary Hom-Nambu superalgebras to themselves are called endomorphisms, and the bijective endomorphisms are called automorphisms.  The set of all endomorphisms of an \(n\)-ary Hom-Nambu superalgebra \(N\) is denoted by \(\operatorname{End}(N)\), and the group of all automorphisms of \(N\) is denoted by \(\operatorname{Aut}(N)\). To avoid ambiguities in the notation, the set of all linear maps on a linear superspace \(N\) is denoted by \(\operatorname{Lin}(N)\). Note that \(\operatorname{End}(N)\subseteq \operatorname{Lin}(N)\) for a given \(n\)-ary Hom-Nambu superalgebra \(N\). This inclusion is strict for most \(n\)-ary Hom-Nambu superalgebras, with \(\operatorname{End}(N)\) typically being smaller than \(\operatorname{Lin}(N)\) because of the restrictive nature of the conditions of multiplicativity of endomorphisms with respect to \(n\)-ary multiplication and compatibility with the twisting maps \(\widetilde{\alpha }\).

\begin{definition}
[\cite{MabroukNcibSilvAACA2021-GenDerRotaBaxterOpsnaryHomNambuSuperal}]
Let $(N,[\cdot,\dots,\cdot],\alpha)$ be a multiplicative
$n$-ary Hom-Nambu superalgebra, and let $k\geq0$.
A homogeneous linear map $D\in \operatorname{Lin}(N)$ is said to have
parity \(|D|= \bar j\in\mathbb Z_2\) if
\[
D(N_{\bar i})\subseteq N_{\bar i+\bar j},
\qquad \forall\, \bar i\in\mathbb Z_2.
\]
Such a homogeneous linear map $D$ is called an
\emph{$\alpha^k$-derivation} if
\(
D\circ\alpha=\alpha\circ D,
\) (that is, \(D\) commutes with \(\alpha\)),
and for all homogeneous $x_1,\dots,x_n\in N$,
\begin{align*}
D\bigl([x_1,\dots,x_n]\bigr)
={}&
\sum_{i=1}^{n}
(-1)^{|D|\cdot(|x_1|+\cdots+|x_{i-1}|)}
\\
&\quad
\bigl[
\alpha^k(x_1),\dots,\alpha^k(x_{i-1}),
D(x_i),
\alpha^k(x_{i+1}),\dots,\alpha^k(x_n)
\bigr].
\end{align*}
The linear space of all
\(\alpha^k\)-derivations of parity \( \bar j\in\mathbb Z_2\) is denoted
\(
\operatorname{Der}_{\alpha^k}(N)_{\bar j}.
\)
We then set
\(
\operatorname{Der}_{\alpha^k}(N)
=
\operatorname{Der}_{\alpha^k}(N)_{\bar 0}
\oplus
\operatorname{Der}_{\alpha^k}(N)_{\bar 1}.
\)
Throughout this paper, we use the convention
\(
\alpha^0=\operatorname{id}_N.
\)
\end{definition}

\begin{definition}
For \( X = (x_1, \ldots, x_{n-1}) \in H(N)^{n-1} \) satisfying \( \alpha(X) = X \)
and for any integer \( k \geq 0 \), we define the linear map
\(\operatorname{ad}^{k}_{X} \in \operatorname{Lin}(N)\) by
\[
\operatorname{ad}^{k}_{X}(y) = [x_1, \ldots, x_{n-1}, \alpha^{k}(y)],
\qquad \forall\, y \in N.
\]
\end{definition}

\begin{lemma}[\cite{MabroukNcibSilvAACA2021-GenDerRotaBaxterOpsnaryHomNambuSuperal}]
Let
\(
D\in \operatorname{Der}_{\alpha^k}(N)
\)
and
\( D'\in \operatorname{Der}_{\alpha^{k'}}(N)
\)
be homogeneous, where $k,k'\geq 0$. Then their supercommutator
\[
[D,D']
=
D\circ D'
-
(-1)^{|D||D'|}D'\circ D
\]
belongs to
\(
\operatorname{Der}_{\alpha^{k+k'}}(N).
\)
Consequently, if
\[
\operatorname{Der}(N)
=
\bigoplus_{k\geq 0}\operatorname{Der}_{\alpha^k}(N),
\]
define, for \(\bar j\in\mathbb{Z}_2\),
\[
\operatorname{Der}(N)_{\bar j}
=
\bigoplus_{k\geq0}
\operatorname{Der}_{\alpha^k}(N)_{\bar j}.
\]
Then
\[
\operatorname{Der}(N)
=
\operatorname{Der}(N)_{\bar 0}
\oplus
\operatorname{Der}(N)_{\bar 1}.
\]
Moreover, for
\[
D\in\operatorname{Der}_{\alpha^k}(N)_{\bar i},
\qquad
D'\in\operatorname{Der}_{\alpha^{k'}}(N)_{\bar j},
\]
we have
\[
[D,D']
\in
\operatorname{Der}_{\alpha^{k+k'}}(N)_{\bar i+\bar j}.
\]
Hence the supercommutator
\[
[D,D']
=
D\circ D'
-
(-1)^{|D||D'|}D'\circ D
\]
defines a Lie superalgebra bracket on
\[
\operatorname{Der}(N)
=
\operatorname{Der}(N)_{\bar0}
\oplus
\operatorname{Der}(N)_{\bar1}.
\]
\end{lemma}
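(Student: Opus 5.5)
We verify directly that the supercommutator of an $\alpha^k$-derivation and an $\alpha^{k'}$-derivation satisfies the $\alpha^{k+k'}$-derivation rule. The remaining assertions then follow formally.

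\textbf{Commutation with $\alpha$.} Let $D\in\operatorname{Der}_{\alpha^k}(N)$ and $D'\in\operatorname{Der}_{\alpha^{k'}}(N)$ be homogeneous. Since both commute with $\alpha$, so do $D\circ D'$ and $D'\circ D$, and hence so does $[D,D']$. Its parity is $|D|+|D'|$, because composition adds parities.

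\textbf{Expanding $D\circ D'$.} Fix homogeneous $x_1,\dots,x_n$. We first apply the $\alpha^{k'}$-rule for $D'$, and then the $\alpha^k$-rule for $D$ to each of the resulting $n$ brackets. Using $D\alpha^{k'}=\alpha^{k'}D$, this gives two kinds of terms.
\begin{itemize}
\item \emph{Diagonal terms}, in which $DD'(x_i)$ sits in slot $i$ and every other slot carries $\alpha^{k+k'}$. These occur with sign $(-1)^{(|D|+|D'|)(|x_1|+\cdots+|x_{i-1}|)}$.
\item \emph{Mixed terms}, indexed by $i\neq j$, in which $\alpha^{k}D'(x_i)$ sits in slot $i$, $\alpha^{k'}D(x_j)$ sits in slot $j$, and the remaining slots carry $\alpha^{k+k'}$.
\end{itemize}
For the sign of a mixed term with $i<j$: $D'$ passes $x_1,\dots,x_{i-1}$, and $D$ passes $x_1,\dots,x_{j-1}$ with $x_i$ replaced by $D'(x_i)$. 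This produces the factor
\[
(-1)^{|D'|\,|x|_{<i}+|D|\,|x|_{<j}+|D||D'|}.
\]
For $i>j$, $D$ acts in a slot preceding $D'(x_i)$, so the extra factor $(-1)^{|D||D'|}$ does not appear.

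\textbf{Cancellation.} Expand $D'\circ D$ symmetrically and multiply it by $(-1)^{|D||D'|}$. The key bookkeeping step is to match each mixed term of $D\circ D'$ indexed by $(i,j)$ with the mixed term of $(-1)^{|D||D'|}D'\circ D$ that has the same bracket entries. For $i<j$, the term of $D'\circ D$ with the same entries has no extra factor, so after multiplying by $(-1)^{|D||D'|}$ the signs agree. For $i>j$, the term of $D'\circ D$ carries the factor $(-1)^{|D||D'|}$, which becomes trivial after multiplication, again matching. The slots themselves agree because $\alpha$ commutes with both $D$ and $D'$. Hence all mixed terms cancel in $D\circ D'-(-1)^{|D||D'|}D'\circ D$. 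The diagonal terms combine to
\[
\sum_{i=1}^{n}(-1)^{(|D|+|D'|)(|x_1|+\cdots+|x_{i-1}|)}\,\bigl[\alpha^{k+k'}(x_1),\dots,[D,D'](x_i),\dots,\alpha^{k+k'}(x_n)\bigr],
\]
which is exactly the $\alpha^{k+k'}$-derivation rule. This sign matching is the only real obstacle, and it is routine once the two cases $i<j$ and $i>j$ are separated.

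\textbf{The remaining assertions.} The decomposition $\operatorname{Der}(N)=\operatorname{Der}(N)_{\bar0}\oplus\operatorname{Der}(N)_{\bar1}$ is immediate from the definitions, by regrouping the summands according to parity. Bilinearity of the bracket extends it to the direct sum. Super skew-symmetry and the super Jacobi identity hold because the supercommutator on the associative superalgebra $\operatorname{Lin}(N)$ satisfies them, and $\operatorname{Der}(N)$ is closed under it by the first part.
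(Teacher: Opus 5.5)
Your proposal is correct and follows the same route as the paper. You expand $[D,D']([x_1,\ldots,x_n])$ using the two derivation rules, note that the mixed terms (where $D$ and $D'$ act in different slots) cancel pairwise while the diagonal terms give the $\alpha^{k+k'}$-rule, and inherit super skew-symmetry and the super Jacobi identity from the supercommutator on $\operatorname{Lin}(N)$; your split into the cases $i<j$ and $i>j$ just makes explicit the sign bookkeeping that the paper compresses into ``the terms containing two derivations cancel pairwise.''
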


\begin{proof}
Since both $D$ and $D'$ commute with $\alpha$, we have
\[
[D,D']\circ\alpha
=
\alpha\circ[D,D'].
\]
Applying the $\alpha^k$-derivation identity for $D$ and the
$\alpha^{k'}$-derivation identity for $D'$ to
$[x_1,\ldots,x_n]$, and then subtracting the two resulting
expressions with the sign
$(-1)^{|D||D'|}$, the terms containing two derivations cancel
pairwise. The remaining terms give
\[
\begin{aligned}
&[D,D']([x_1,\ldots,x_n])\\
&=
\sum_{i=1}^{n}
(-1)^{(|D|+|D'|)(|x_1|+\cdots+|x_{i-1}|)}
\bigl[
\alpha^{k+k'}(x_1),\ldots,
\alpha^{k+k'}(x_{i-1}),\\
&\hspace{4.7cm}
[D,D'](x_i),
\alpha^{k+k'}(x_{i+1}),\ldots,
\alpha^{k+k'}(x_n)
\bigr].
\end{aligned}
\]
Therefore,
\(
[D,D']
\in
\operatorname{Der}_{\alpha^{k+k'}}(N).
\)
Moreover, if
\(
|D|=\bar i
\)
and
\(
|D'|= \bar j,
\)
then
\(
|[D,D']|=\bar i+\bar j.
\)
Hence,
\[
[\operatorname{Der}(N)_{\bar i},\operatorname{Der}(N)_{\bar j}]
\subseteq
\operatorname{Der}(N)_{\bar i+\bar j}.
\]
The super-skew-symmetry and the super-Jacobi identity follow from
the corresponding properties of the supercommutator in
$\operatorname{Lin}(N)$. Therefore,
\[
\operatorname{Der}(N)
=
\operatorname{Der}(N)_{\bar0}
\oplus
\operatorname{Der}(N)_{\bar1}
\]
is a Lie superalgebra.
\end{proof}

\begin{lemma}
The map \(\operatorname{ad}^{k}_{X}\) defined above is an
\(\alpha^{k+1}\)\,--derivation of \(N\), called an
\emph{inner \(\alpha^{k+1}\)\,--derivation}, and its parity satisfies
\(|\operatorname{ad}^{k}_{X}| = |X|\).
\end{lemma}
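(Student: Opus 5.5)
The plan is to check the two defining conditions of an $\alpha^{k+1}$-derivation directly: commutation with $\alpha$, and the twisted Leibniz rule. Both come from multiplicativity together with the standing hypothesis $\alpha(X)=X$, that is, $\alpha(x_j)=x_j$ for $1\le j\le n-1$. Parity is handled first. Each $x_j$ is homogeneous and the bracket is even, so for homogeneous $y$ the element $[x_1,\ldots,x_{n-1},\alpha^k(y)]$ lies in degree $|x_1|+\cdots+|x_{n-1}|+|y|$, because $\alpha^k$ is even. Hence $\operatorname{ad}^k_X$ is homogeneous of parity $|X|=\sum_j|x_j|$.

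For commutation with $\alpha$, apply multiplicativity and then $\alpha(x_j)=x_j$:
\[
\alpha\bigl(\operatorname{ad}^k_X(y)\bigr)=[\alpha(x_1),\ldots,\alpha(x_{n-1}),\alpha^{k+1}(y)]
=[x_1,\ldots,x_{n-1},\alpha^{k}(\alpha(y))]=\operatorname{ad}^k_X(\alpha(y)).
\]

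For the derivation identity, take homogeneous $y_1,\ldots,y_n$. By multiplicativity, $\operatorname{ad}^k_X([y_1,\ldots,y_n])=[x_1,\ldots,x_{n-1},[\alpha^k(y_1),\ldots,\alpha^k(y_n)]]$. Rewrite each $x_j$ as $\alpha(x_j)$ and apply the super-Hom-Nambu identity \eqref{shni} with all $\alpha_i=\alpha$, using the homogeneous elements $\alpha^k(y_1),\ldots,\alpha^k(y_n)$ in the $y$-slots. This gives
\[
\sum_{i=1}^n(-1)^{|X||Y|_{i-1}}\bigl[\alpha^{k+1}(y_1),\ldots,\alpha^{k+1}(y_{i-1}),[x_1,\ldots,x_{n-1},\alpha^k(y_i)],\alpha^{k+1}(y_{i+1}),\ldots,\alpha^{k+1}(y_n)\bigr].
\]
Here the parities $|\alpha^k(y_j)|=|y_j|$ are unchanged, so the sign exponent is exactly $|X||Y|_{i-1}$. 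The inner bracket equals $\operatorname{ad}^k_X(y_i)$. Since $|\operatorname{ad}^k_X|=|X|$, the sign $(-1)^{|X|(|y_1|+\cdots+|y_{i-1}|)}$ is precisely the one required in the definition of an $\alpha^{k+1}$-derivation.

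There is no genuine obstacle; the only delicate point is bookkeeping. One must use $\alpha(X)=X$ in both places: it lets the super-Hom-Nambu identity, whose left side carries $\alpha(x_j)$, be applied to the untwisted $x_j$, and it makes the commutation computation close up. One must also check that the grading signs match, which relies on $\alpha$ being even. Linearity in $y$ then extends the identity from homogeneous to arbitrary elements.
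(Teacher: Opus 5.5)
Your proposal is correct and follows essentially the same route as the paper. Both arguments check commutation with $\alpha$ via multiplicativity and $\alpha(X)=X$, then rewrite each $x_j$ as $\alpha(x_j)$ and apply the super-Hom-Nambu identity to $\alpha^k(y_1),\ldots,\alpha^k(y_n)$, and finally read the parity $|X|$ off the evenness of the bracket and of $\alpha$; the only difference is that you treat parity first, which is immaterial.
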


\begin{proof}
Let $k\geq 0$ and let \( X=(x_1,\ldots,x_{n-1})\in H(N)^{n-1} \) be such that $\alpha(X)=X$.
First, we show that $\operatorname{ad}^{k}_{X}$ commutes with $\alpha$. For any
$y\in H(N)$, using the multiplicativity of $N$ and the condition
$\alpha(x_i)=x_i$ for all $1\leq i\leq n-1$, we obtain
\[
\begin{aligned}
\alpha\bigl(\operatorname{ad}^{k}_{X}(y)\bigr)
&=
\alpha\bigl([x_1,\ldots,x_{n-1},\alpha^k(y)]\bigr)\\
&=
[\alpha(x_1),\ldots,\alpha(x_{n-1}),\alpha^{k+1}(y)]\\
&=
[x_1,\ldots,x_{n-1},\alpha^{k+1}(y)] =
\operatorname{ad}^{k}_{X}(\alpha(y)).
\end{aligned}
\]
Hence,
\(
\alpha\circ\operatorname{ad}^{k}_{X}
=
\operatorname{ad}^{k}_{X}\circ\alpha.
\)

Now, let $y_1,\ldots,y_n\in H(N)$. Since $N$ is multiplicative,
\[
\alpha^k([y_1,\ldots,y_n])
=
[\alpha^k(y_1),\ldots,\alpha^k(y_n)].
\]
Therefore,
\[
\begin{aligned}
\operatorname{ad}^{k}_{X}([y_1,\ldots,y_n])
&=
[x_1,\ldots,x_{n-1},
\alpha^k([y_1,\ldots,y_n])]\\
&=
[x_1,\ldots,x_{n-1},
[\alpha^k(y_1),\ldots,\alpha^k(y_n)]]\\
&=
[\alpha(x_1),\ldots,\alpha(x_{n-1}),
[\alpha^k(y_1),\ldots,\alpha^k(y_n)]].
\end{aligned}
\]
Applying the super-Hom-Nambu identity \eqref{shni}, we obtain
\[
\begin{aligned}
&\operatorname{ad}^{k}_{X}([y_1,\ldots,y_n])\\
&=
\sum_{i=1}^{n}
(-1)^{|X||Y|_{i-1}}
\bigl[
\alpha^{k+1}(y_1),\ldots,
\alpha^{k+1}(y_{i-1}),
[x_1,\ldots,x_{n-1},\alpha^k(y_i)],\\
&\hspace{7cm}
\alpha^{k+1}(y_{i+1}),\ldots,
\alpha^{k+1}(y_n)
\bigr]\\
&=
\sum_{i=1}^{n}
(-1)^{|X||Y|_{i-1}}
\bigl[
\alpha^{k+1}(y_1),\ldots,
\alpha^{k+1}(y_{i-1}),
\operatorname{ad}^{k}_{X}(y_i),\\
&\hspace{7cm}
\alpha^{k+1}(y_{i+1}),\ldots,
\alpha^{k+1}(y_n)
\bigr],
\end{aligned}
\]
where
\(
|Y|_{i-1}=\sum\limits_{j=1}^{i-1}|y_j|.
\)
Thus, $\operatorname{ad}^{k}_{X}$ satisfies the
$\alpha^{k+1}$-derivation identity.

Finally, because the $n$-ary bracket and $\alpha$ are even, for every
$y\in H(N)$,
\[
\begin{aligned}
\left|\operatorname{ad}^{k}_{X}(y)\right|
&=
\left|[x_1,\ldots,x_{n-1},\alpha^k(y)]\right|\\
&=
|x_1|+\cdots+|x_{n-1}|+|\alpha^k(y)|
= |X|+|y|.
\end{aligned}
\]
Consequently,
\(
\left|\operatorname{ad}^{k}_{X}\right|=|X|.
\)
Therefore, $\operatorname{ad}^{k}_{X}$ is an
$\alpha^{k+1}$-derivation of $N$ of parity $|X|$.
\end{proof}

\section{Completeness of \texorpdfstring{\(n\)}{n}-ary Hom-Nambu superalgebras} \label{sec3-ComplnaryHomNambuSuperalg}
Let us consider the \(n\)-ary setting. Because an \(n\)-ary Hom-Nambu bracket is not necessarily totally super-skew-symmetric, the position occupied by a central element or an ideal element may be relevant. Therefore, we introduce positional centers and require Hom-ideals to be stable in every argument. These notions are reduced to the usual ones when the bracket is totally super-skew-symmetric.

We also introduce and examine several fundamental structural notions of $n$-ary Hom-Nambu superalgebras, including Hom-subalgebras, Hom-ideals, centers, and centralizers.
These concepts, extending the classical ideas from Lie and Hom-Lie theory \cite{farhangdoost2023} to the $n$-ary superalgebraic framework, will serve as essential tools for analyzing the completeness, derivations, and decomposition properties of $n$-ary Hom-Nambu superalgebras.
\begin{definition}[\cite{farhangdoost2023}]
	Let $(g=g_{\bar0}\oplus g_{\bar1},\,[\cdot\!,\!\cdot],\alpha)$ be a Hom-superalgebra (i.e. a $\mathbb Z_2$-graded linear space equipped with an even linear map $\alpha$ and a bilinear bracket $[\,,\,]$ satisfying
\(
[g_{\bar i},g_{\bar j}]\subseteq g_{\bar i+\bar j}
\)
for all \(\bar i,\bar j\in\mathbb Z_2\)).
	A \emph{graded Hom-subalgebra} of $g$ is a $\mathbb Z_2$-graded linear subspace
	\[
	I=(I\cap g_{\bar0})\oplus (I\cap g_{\bar1})\subseteq g
	\]
	such that
	\(
	\alpha(I)\subseteq I\)
    and
    \([I,I]\subseteq I.
	\)
\end{definition}

\begin{remark}
For \(k\geq 0\), we denote by
\(\operatorname{Der}_{\alpha^{k}}(N)\) the space of all
\(\alpha^{k}\)-derivations of \(N\). We denote by
\(\operatorname{Inn}_{\alpha^{k}}(N)\) the subspace of
\(\operatorname{Der}_{\alpha^{k+1}}(N)\) generated by all inner
\(\alpha^{k+1}\)-derivations of \(N\). We define the spaces of
derivations and inner derivations by
\[
\operatorname{Der}(N)
=
\bigoplus_{k\geq 0}
\operatorname{Der}_{\alpha^{k}}(N),
\qquad
\operatorname{Inn}(N)
=
\bigoplus_{k\geq 0}
\operatorname{Inn}_{\alpha^{k}}(N),
\]
where these direct sums are understood as graded external direct sums.
\end{remark}

\begin{definition}[\cite{farhangdoost2023}]
	Let $(\mathfrak{g},[\cdot,\cdot],\alpha)$ be a Hom-superalgebra and $I\subseteq \mathfrak{g}$ be a graded Hom-subalgebra.
	We say that $I$ is a \emph{left Hom-ideal} of $\mathfrak{g}$ if $[\mathfrak{g},I]\subseteq I$ and that it is a \emph{right Hom-ideal} if $[I,\mathfrak{g}]\subseteq I$.
	If both conditions hold, then $I$ is called a \emph{Hom-ideal} (or two-sided Hom-ideal), and we write $I\lhd \mathfrak{g}$.
\end{definition}

\begin{definition}[\cite{farhangdoost2023}]
	Let $(\mathfrak{g},[\cdot,\cdot],\alpha)$ be a Hom-Lie superalgebra.
	The center of $\mathfrak{g}$ is
	\(
	C(\mathfrak{g}) \;=\;\{\,x\in \mathfrak{g} \;\mid\; [x,\mathfrak{g}]=0\,\}.
	\)
	If $I \subseteq \mathfrak{g}$ is a Hom-ideal, the \emph{centralizer} of $I$ in $\mathfrak{g}$ is
	\(
	C_{\mathfrak{g}}(I) \;=\;\{\,x\in \mathfrak{g} \;\mid\; [x,I]=0\,\}.
	\)
\end{definition}

\begin{definition}[\cite{farhangdoost2023}] \label{def:complete}
A multiplicative Hom-Lie superalgebra $(\mathfrak{g},[\cdot,\cdot],\alpha)$ is called complete if it satisfies the following conditions, for each $k\geq 0$\textup{:}
\[
C(\mathfrak{g}) = 0, \qquad \operatorname{Der}_{\alpha^{k+1}}(\mathfrak{g}) = \operatorname{ad}_{\alpha^k
}(\mathfrak{g}).
\]
\end{definition}

\begin{proposition}
For every $\lambda\in\mathbb{R}^{*}$, the Hom-Lie superalgebra
\[
\mathfrak{osp}(1,2)_\lambda
=
\bigl(
\mathfrak{osp}(1,2),
[\cdot,\cdot]_{\alpha_\lambda},
\alpha_\lambda
\bigr)
\]
defined in Example \ref{osposp} is complete.
\end{proposition}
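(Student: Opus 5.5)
The plan is to verify the two conditions of Definition~\ref{def:complete} directly from the explicit relations of Example~\ref{osposp}. Throughout, $\operatorname{ad}_{\alpha^k}(\mathfrak g)$ is read as the span of the maps $\operatorname{ad}^k_x=[x,\alpha_\lambda^k(\cdot)]_{\alpha_\lambda}$ with $\alpha_\lambda(x)=x$, as in the $n$-ary definition with $n=2$.

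\emph{Center.} Let $z=aH+bX+cY+dF+eG$ lie in $C(\mathfrak{osp}(1,2)_\lambda)$. Bracketing with $H$ gives
\[
[H,z]_{\alpha_\lambda}=2\lambda^2bX-\tfrac{2}{\lambda^2}cY-\tfrac1\lambda dF+\lambda eG ,
\]
so $b=c=d=e=0$. Then $[X,z]_{\alpha_\lambda}=-2a\lambda^2X$ forces $a=0$. Hence the center is zero for every $\lambda\in\mathbb R^*$.

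\emph{Reduction via commutation with $\alpha_\lambda$.} Any $\alpha^{k+1}$-derivation $D$ commutes with $\alpha_\lambda$, so it preserves the eigenspaces of $\alpha_\lambda$. The eigenvalues are $1,\lambda^2,\lambda^{-2}$ on $H,X,Y$ and $\lambda^{-1},\lambda$ on $F,G$. I would split into three cases.

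\begin{enumerate}
\item \emph{Generic case $\lambda\neq\pm1$.} The five eigenvalues are pairwise distinct, because for real $\lambda$ each coincidence forces $\lambda=\pm1$. Hence $D$ is diagonal in the basis $\{H,X,Y,F,G\}$. An odd map sends even eigenvectors to odd ones, and since no even eigenvalue equals an odd one, an odd $D$ must vanish. So $D$ is even and diagonal, say $D=\operatorname{diag}(h,x,y,f,g)$. Also, the only $\alpha_\lambda$-fixed vectors are the multiples of $H$, so the inner derivations of level $k$ are exactly $\mathbb K\,\operatorname{ad}^k_H$.
\item \emph{Case $\lambda=1$.} Here $\mathfrak{osp}(1,2)_1=\mathfrak{osp}(1,2)$, every element is fixed, and $\alpha^{k+1}$-derivations are ordinary derivations. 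The claim reduces to the classical fact that $\mathfrak{osp}(1,2)$ is simple with all derivations inner.
\item \emph{Case $\lambda=-1$.} Here $\alpha_{-1}=\operatorname{id}$ on $V_{\bar0}$ and $-\operatorname{id}$ on $V_{\bar1}$. Commutation with $\alpha_{-1}$ again forces $D$ to be even, and the fixed points are exactly $V_{\bar0}$. One checks that, for even $D$, the derivation condition for $\alpha_{-1}^{k+1}$ together with the twisted bracket reduces to the classical derivation condition up to the parity signs on odd arguments. Every even derivation is then of the form $\operatorname{ad}^k_u$ with $u\in V_{\bar0}$, again using completeness of the Lie superalgebra $\mathfrak{osp}(1,2)$.
\end{enumerate}

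\emph{The linear system in the generic case.} I would impose the $\alpha^{k+1}$-derivation identity on the defining relations $[X,Y]$, $[H,X]$, $[H,Y]$, $[X,F]$, $[Y,G]$, $[G,F]$, $[F,F]$ and $[G,G]$. For example, $[X,Y]_{\alpha_\lambda}=H$ gives
\[
h=\lambda^{-2(k+1)}x+\lambda^{2(k+1)}y .
\]
Each relation produces one linear equation in $h,x,y,f,g$ with powers of $\lambda$ as coefficients. The aim is to show that the solution space is at most one-dimensional. Since $\operatorname{ad}^k_H$ is a nonzero solution by the preceding Lemma, every $D$ would then be a scalar multiple of $\operatorname{ad}^k_H$, hence inner. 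This would complete the proof together with the two special cases.

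The main obstacle is this rank computation. The $\lambda$-powers coming from $\alpha_\lambda^{k+1}$ must be tracked carefully, and one has to check that no special value of $\lambda$ with $\lambda\neq\pm1$ and no $k$ makes the system degenerate. I would first use the $[H,\cdot]$ relations to express $x,y,f,g$ in terms of $h$ and a free parameter, then substitute into $[X,Y]$ and $[G,F]$ to pin down the remaining freedom. A secondary point is checking the $\lambda=-1$ sign bookkeeping, which I expect to be routine.
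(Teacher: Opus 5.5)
Your argument is correct, but it is organized differently from the paper's. The paper does not split into cases. For every $\lambda$ it sets $E=\alpha_\lambda^{-(k+1)}\circ D$. Using $[\cdot,\cdot]_{\alpha_\lambda}=\alpha_\lambda\circ[\cdot,\cdot]$ and the fact that $\alpha_\lambda$ is an automorphism commuting with $D$, it sees that $E$ is an ordinary homogeneous derivation of $\mathfrak{osp}(1,2)$. It writes $E=\operatorname{ad}_z$ by the classical theorem, gets $\alpha_\lambda(z)=z$ from $E\alpha_\lambda=\alpha_\lambda E$ and the trivial center, and concludes $D=\alpha_\lambda^{k+1}\circ\operatorname{ad}_z=\operatorname{ad}^k_z$. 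The paper handles the center by injectivity of $\alpha_\lambda$ rather than by your direct bracket computation, but both are fine.

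Your $\lambda=-1$ case is this same trick in disguise. For odd $k+1$ it is $\alpha_{-1}\circ D$, not $D$, that satisfies the classical derivation identity, and that is what ``up to parity signs'' has to mean. Applied verbatim, the trick would have covered every $\lambda$ at once.

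What your generic case gains is an elementary, self-contained computation. It needs no cohomological input for $\lambda\neq\pm1$, and it identifies $\operatorname{Der}_{\alpha^{k+1}}$ explicitly as the line $\mathbb{C}\,\operatorname{ad}^k_H$. The price is the case split, which depends on $\lambda$ being real: for complex $\lambda$, further eigenvalue collisions such as $\lambda^3=1$ or $\lambda^4=1$ appear. You also still need the classical theorem at $\lambda=\pm1$.

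The rank computation you flag as the main obstacle closes at once, though not in the order you anticipate.
\begin{enumerate}
\item Write $K=k+1$. The $[H,\cdot]$ relations put no condition on $x,y,f,g$; they only give $h\lambda^{2K}=0$ and its analogues, so $h=0$.
\item The odd--odd relations $[G,F]_{\alpha_\lambda}=H$, $[G,G]_{\alpha_\lambda}=-2\lambda^2X$ and $[F,F]_{\alpha_\lambda}=\frac{2}{\lambda^2}Y$ then force $g=-\lambda^{2K}f$, $x=2\lambda^{K}g$ and $y=2\lambda^{-K}f$.
\item The relations for $[X,Y]$, $[X,F]$ and $[Y,G]$ are automatically consistent with these.
\end{enumerate}
All coefficients are nonzero powers of $\lambda$, so no value of $\lambda$ or $k$ degenerates. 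The solution line is spanned by $\operatorname{ad}^k_H=\operatorname{diag}(0,2\lambda^{2K},-2\lambda^{-2K},-\lambda^{-K},\lambda^{K})$.
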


\begin{proof}
Let
\(
\mathfrak{g}=\mathfrak{osp}(1,2)
\)
and denote by $[\cdot,\cdot]$ the ordinary Lie superbracket. Recall that
\(
[x,y]_{\alpha_\lambda}=\alpha_\lambda([x,y])
\)
for all $x,y\in\mathfrak{g}$. Since $\lambda\neq0$, the twisting map $\alpha_\lambda$ is an automorphism of $\mathfrak{g}$.

Suppose that $x\in C(\mathfrak{osp}(1,2)_\lambda)$. Then
\(
0=[x,y]_{\alpha_\lambda}
=\alpha_\lambda([x,y])
\)
for every $y\in\mathfrak{g}$. Since $\alpha_\lambda$ is injective, we have $[x,y]=0$ for every $y\in\mathfrak{g}$. Therefore,
\(
x\in C(\mathfrak{osp}(1,2))=\{0\},
\)
and hence
\(
C(\mathfrak{osp}(1,2)_\lambda)=\{0\}.
\)

Let $k\geq 0$, and let
\(
D\in
\operatorname{Der}_{\alpha_\lambda^{k+1}}
(\mathfrak{osp}(1,2)_\lambda)
\)
be homogeneous.
Since $D$ commutes with $\alpha_\lambda$, the linear map
\(
E=\alpha_\lambda^{-(k+1)}\circ D
\)
commutes with $\alpha_\lambda$. Using
$[\cdot,\cdot]_{\alpha_\lambda}
=\alpha_\lambda\circ[\cdot,\cdot]$
in the $\alpha_\lambda^{k+1}$-derivation identity, one obtains
\[
E([x,y])
=
[E(x),y]
+
(-1)^{|E||x|}[x,E(y)]
\]
for all homogeneous $x,y\in\mathfrak{g}$. Thus, $E$ is a homogeneous derivation of the Lie superalgebra
$\mathfrak{osp}(1,2)$, of parity $|E|=|D|$, where by a homogeneous
derivation we mean a homogeneous linear map satisfying
\[
E([x,y])
=
[E(x),y]
+
(-1)^{|E||x|}[x,E(y)]
\]
for all homogeneous $x,y\in\mathfrak{g}$.

It is well known that every homogeneous derivation of the simple Lie
superalgebra $\mathfrak{osp}(1,2)$ is inner
\cite{SerganovaVaintrob2026}, that is,
\(
\operatorname{Der}(\mathfrak{osp}(1,2))
=
\operatorname{Inn}(\mathfrak{osp}(1,2)).
\)
Hence there exists a homogeneous element \(z\in\mathfrak{g}\) such that
\(
E=\operatorname{ad}_z.
\)
The equality
\(
E\circ\alpha_\lambda
=
\alpha_\lambda\circ E
\)
implies
\(
[z-\alpha_\lambda(z),\alpha_\lambda(x)]=0
\)
for all $x\in\mathfrak{g}$. Since $\alpha_\lambda$ is surjective and
$C(\mathfrak{g})=\{0\}$, it follows that
\(
\alpha_\lambda(z)=z.
\)

Consequently, for every $x\in\mathfrak{g}$,
\[
D(x)=
\alpha_\lambda^{k+1}([z,x])=
[z,\alpha_\lambda^{k+1}(x)]=
[z,\alpha_\lambda^k(x)]_{\alpha_\lambda}=
\operatorname{ad}_{\alpha_\lambda^k}^{z}(x).
\]
Therefore, every $\alpha_\lambda^{k+1}$-derivation is inner. Hence,
\[
\operatorname{Der}_{\alpha_\lambda^{k+1}}
(\mathfrak{osp}(1,2)_\lambda)
=
\operatorname{ad}_{\alpha_\lambda^k}
(\mathfrak{osp}(1,2)_\lambda).
\]
Since $k\geq 0$ was arbitrary,
$\mathfrak{osp}(1,2)_\lambda$ is complete for every
$\lambda\in\mathbb{R}^{*}$.
\end{proof}

\begin{definition}
Let \( (N,[\cdot,\ldots,\cdot],\alpha) \) be a multiplicative \(n\)-ary Hom-Nambu superalgebra. A
Hom-subalgebra of \(N\) is a graded linear subspace
\(
I=(I\cap N_{\bar 0})\oplus(I\cap N_{\bar 1})
\)
such that
\(
\alpha(I)\subseteq I
\)
and
\(
[I,\ldots,I]\subseteq I.
\)
\end{definition}

\begin{definition}
Let
\(
(N,[\cdot,\ldots,\cdot],\alpha)
\)
be a multiplicative \(n\)-ary Hom-Nambu superalgebra, and let
\(I\) be a Hom-subalgebra of \(N\). We call \(I\) a Hom-ideal of
\(N\) if
\(
[N,\ldots,N,
\underset{i\text{-th position}}{I},
N,\ldots,N]\subseteq I
\)
for each \(1\leq i\leq n\). In this case, we write
\(
I\triangleleft N.
\)
\end{definition}
\begin{definition}
Let
\(
(N,[\cdot,\ldots,\cdot],\alpha)
\)
be a multiplicative \(n\)-ary Hom-Nambu superalgebra. For
\(1\leq i\leq n\), the \(i\)-th positional center of \(N\) is
defined by
\[
C_i(N)=
\left\{
z\in N\ \middle|\
[x_1,\ldots,x_{i-1},z,x_{i+1},\ldots,x_n]=0
\text{ for all }x_j\in N
\right\}.
\]
The center of \(N\) is
\(
C(N)=\bigcap\limits_{i=1}^{n}C_i(N).
\)
More generally, if \(I\) is a Hom-ideal of \(N\), we define
\[
C_i(I)=
\left\{
z\in N\ \middle|\
[x_1,\ldots,x_{i-1},z,x_{i+1},\ldots,x_n]=0
\text{ for all }x_j\in I
\right\},
\]
and set
\(
C_N(I)=\bigcap\limits_{i=1}^{n}C_i(I).
\)
\end{definition}


We present several structural results concerning Hom-ideals, centers, and centralizers in $n$-ary Hom-Lie superalgebras. These propositions clarify how the Hom-ideal property can be expressed equivalently, how the center behaves under multiplicativity, and how completeness arises under specific conditions.

\begin{proposition}
Let
\(
(N,[\cdot,\ldots,\cdot],\alpha)
\)
be an \(n\)-Hom-Lie superalgebra.

\begin{enumerate}
\item
For every \(1\leq i,j\leq n\),
\(
C_i(N)=C_j(N).
\)

\item
Let \(I\) be a graded \(\alpha\)-stable subspace of \(N\). If
\[
[N,\ldots,N,
\underset{i\text{-th position}}{I},
N,\ldots,N]\subseteq I
\]
for one position \(i\), then the same inclusion holds for every
position.
\end{enumerate}
\end{proposition}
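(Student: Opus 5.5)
Both parts rest on one observation: super-skew-symmetry lets us move an element from any slot to any other slot at the cost of a sign. For homogeneous arguments, the defining relation of an $n$-Hom-Lie superalgebra says that swapping two adjacent entries multiplies the bracket by $\pm1$. Adjacent transpositions generate the symmetric group $S_n$, so for any permutation $\sigma\in S_n$ we have
\[
[x_{\sigma(1)},\ldots,x_{\sigma(n)}]=\varepsilon(\sigma;x_1,\ldots,x_n)\,[x_1,\ldots,x_n],
\qquad \varepsilon\in\{1,-1\},
\]
for homogeneous $x_1,\ldots,x_n$. We never need the exact value of the sign, only that it is nonzero. This identity is proved by induction on the number of adjacent transpositions needed to write $\sigma$. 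It is the only step with any content; the rest reduces to it.

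\textbf{Part (i).} First note that each $C_i(N)$ is a graded subspace. Indeed, if $z=z_{\bar0}+z_{\bar1}$ lies in $C_i(N)$, then bracketing with homogeneous $x_j$ splits the result into its two parity components, and both must vanish. It therefore suffices to take $z$ homogeneous. By multilinearity it also suffices to test $z$ against homogeneous $x_j$.

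Now fix $z\in C_i(N)$ homogeneous and a position $j$, and take homogeneous $x_1,\ldots,x_n$. Form $[x_1,\ldots,z,\ldots,x_n]$ with $z$ in slot $j$. Move $z$ to slot $i$ by successive adjacent transpositions, leaving the other entries in order. This gives
\[
[\ldots,\underset{j}{z},\ldots]=\pm[\ldots,\underset{i}{z},\ldots],
\]
and the right-hand side is $0$ because $z\in C_i(N)$. Hence $C_i(N)\subseteq C_j(N)$. By symmetry of $i$ and $j$ the two centers are equal.

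\textbf{Part (ii).} Suppose the inclusion holds in position $i$, and fix another position $j$. Since $I$ is graded, we may take a homogeneous $a\in I$ in slot $j$ and homogeneous $x_k\in N$ in the remaining slots. Apply the same transposition argument: the bracket equals $\pm$ the bracket with $a$ moved to slot $i$. That bracket lies in $I$ by hypothesis, and $I$ is a subspace, so the original bracket lies in $I$ as well. Extending by multilinearity gives the inclusion for position $j$.

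The only mild obstacle is the homogeneity reduction, because super-skew-symmetry is stated only for homogeneous elements. This is handled by the gradedness of $C_i(N)$ and of $I$, together with multilinearity. The $\alpha$-stability hypothesis on $I$ plays no role in the argument; it is included only so that $I$ is a candidate Hom-ideal.
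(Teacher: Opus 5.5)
Your proposal is correct and follows the paper's own argument: an entry is moved between positions by adjacent transpositions, and each transposition changes the bracket only by a nonzero sign. Your explicit homogeneity reduction, using the gradedness of $C_i(N)$ and of $I$ together with multilinearity, supplies a detail that the paper's short proof leaves implicit.
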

\begin{proof}
Both assertions follow from the total super-skew-symmetry. An element may be moved from one position to any other position by a sequence of adjacent transpositions. Each transposition changes the bracket only by a nonzero Koszul sign.
\end{proof}

This proposition shows that in Hom-Lie superalgebras, the Hom-ideal condition is invariant under cyclic permutation of the arguments in the bracket.
Thus, it does not matter whether the element of \(I\) appears in the first or the last position.

\begin{proposition}
Let \( (N,[\cdot,\ldots,\cdot],\alpha) \) be a multiplicative \(n\)-ary Hom-Nambu superalgebra with a surjective twisting map \(\alpha\). Then the center
	\(
	C(N)=\bigcap\limits_{i=1}^{n}C_i(N)
	\)
	is an abelian Hom-ideal of \(N\).
\end{proposition}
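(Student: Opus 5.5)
The plan is to check in turn the properties required of a Hom-ideal (graded subspace, $\alpha$-stable, closed under brackets in every position), and then to check that $C(N)$ is abelian. Several of these are immediate from the definition of the positional centers. Only $\alpha$-stability uses the hypotheses of multiplicativity and surjectivity of $\alpha$.

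First, each $C_i(N)$ is a linear subspace, being a kernel condition that is linear in $z$. It is graded as well. If $z=z_{\bar 0}+z_{\bar 1}\in C_i(N)$, then for homogeneous $x_j$ the bracket $[x_1,\ldots,z,\ldots,x_n]$ splits into its two homogeneous components $[\ldots,z_{\bar 0},\ldots]$ and $[\ldots,z_{\bar 1},\ldots]$. These lie in different parities because the bracket is even. Both must vanish, so $z_{\bar 0},z_{\bar 1}\in C_i(N)$, and by multilinearity the condition extends to all $x_j\in N$. Hence $C(N)=\bigcap_i C_i(N)$ is a graded subspace.

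Next, the ideal and abelian properties are trivial. For $z\in C(N)$ and any position $i$, we have $[x_1,\ldots,z,\ldots,x_n]=0\in C(N)$ because $z\in C_i(N)$. So $[N,\ldots,C(N),\ldots,N]=0\subseteq C(N)$ in every position. In particular $[C(N),\ldots,C(N)]=0\subseteq C(N)$, which gives both the Hom-subalgebra bracket condition and abelianness.

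The one step needing the hypotheses is $\alpha(C(N))\subseteq C(N)$, which I would prove by the following argument. Let $z\in C(N)$, fix a position $i$, and take arbitrary $x_j\in N$. By surjectivity choose $y_j$ with $\alpha(y_j)=x_j$ for $j\neq i$. Multiplicativity then gives
\[
[x_1,\ldots,\alpha(z),\ldots,x_n]=[\alpha(y_1),\ldots,\alpha(z),\ldots,\alpha(y_n)]=\alpha\bigl([y_1,\ldots,z,\ldots,y_n]\bigr)=\alpha(0)=0 .
\]
So $\alpha(z)\in C_i(N)$ for every $i$, that is, $\alpha(z)\in C(N)$. This completes the verification that $C(N)$ is an abelian Hom-ideal.
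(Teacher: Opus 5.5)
Your proof is correct and takes essentially the same route as the paper. In both, $\alpha$-stability comes from surjectivity plus multiplicativity, and the ideal and abelian properties come directly from the vanishing of every bracket that contains a central element. Your explicit check that $C(N)$ is a graded subspace is a small addition that the paper's proof leaves implicit; the paper proves the analogous fact only later, for the centralizer $C_N(I)$.
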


\begin{proof}
	First, we prove that \(C(N)\) is stable under the twisting map.
	Let \(x\in C(N)\) and fix \(1\leq i\leq n\). For arbitrary
	elements
	\[
	y_1,\ldots,y_{i-1},y_{i+1},\ldots,y_n\in N,
	\]
	the surjectivity of \(\alpha\) implies that there exist elements
	\[
	b_1,\ldots,b_{i-1},b_{i+1},\ldots,b_n\in N
	\]
	such that
	\(
	y_j=\alpha(b_j),\ j\neq i.
	\)
	Using multiplicativity, we obtain
	\[
	\begin{aligned}
	&[y_1,\ldots,y_{i-1},\alpha(x),
	y_{i+1},\ldots,y_n]
	\\
	&\quad=
	[\alpha(b_1),\ldots,\alpha(b_{i-1}),\alpha(x),
	\alpha(b_{i+1}),\ldots,\alpha(b_n)]
	\\
	&\quad=
	\alpha\bigl(
	[b_1,\ldots,b_{i-1},x,b_{i+1},\ldots,b_n]
	\bigr).
	\end{aligned}
	\]
	Since \(x\in C(N)\subseteq C_i(N)\), we have
	\[
	[b_1,\ldots,b_{i-1},x,b_{i+1},\ldots,b_n]=0.
	\]
	Hence,
	\(
	[y_1,\ldots,y_{i-1},\alpha(x),
	y_{i+1},\ldots,y_n]=0.
	\)
	Since \(i\) was arbitrary,
	\[
	\alpha(x)\in\bigcap_{i=1}^{n}C_i(N)=C(N).
	\]
	Therefore,
	\(
	\alpha(C(N))\subseteq C(N).
	\)

	Next, let \(x\in C(N)\) and fix \(1\leq i\leq n\). By the
	definition of the center, for arbitrary elements
	\(x_j\in N\), \(j\neq i\), we have
	\[
	[x_1,\ldots,x_{i-1},x,x_{i+1},\ldots,x_n]=0.
	\]
	Consequently,
	\(
	[N,\ldots,N,
	\underset{i\text{-th position}}{C(N)},
	N,\ldots,N]
	\subseteq \{0\}\subseteq C(N)
	\)
	for each \(1\leq i\leq n\). Hence, \(C(N)\) is a Hom-ideal of
	\(N\).

	Finally, because every element of \(C(N)\) is central in every position,
	\[
	[C(N),\ldots,C(N)]=\{0\}.
	\]
	Therefore, \(C(N)\) is an abelian Hom-ideal of \(N\).
\end{proof}

The above result shows that the center behaves well under multiplicative and surjective twisting,
retaining algebraic stability and internal commutativity.

\begin{proposition}\label{surcom}
	Let
	\(
	(N,[\cdot,\ldots,\cdot],\alpha)
	\)
	be a multiplicative \(n\)-ary Hom-Nambu superalgebra such that
	\(
	\alpha=0.
	\)
	If the bracket
	\(
	[\cdot,\ldots,\cdot]\colon N^n\longrightarrow N
	\)
	is surjective, then
	\(
	\operatorname{Der}_{\alpha^{k+1}}(N)
	=
	\operatorname{Inn}_{\alpha^k}(N)
	=
	\{0\}
	\)
    for every integer \(k\geq0\).
	 Consequently, \(N\) is complete
	if and only if
	\(
	C(N)=\{0\}.
	\)
\end{proposition}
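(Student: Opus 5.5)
The plan is to exploit the fact that with \(\alpha=0\) every power \(\alpha^{k+1}\) with \(k\geq 0\) is the zero map. Then the \(\alpha^{k+1}\)-derivation identity degenerates: each summand on its right-hand side contains the \(n-1\geq 1\) arguments \(\alpha^{k+1}(x_j)=0\) for \(j\neq i\). Since the bracket is multilinear, every such summand vanishes. (Here I assume \(n\geq 2\), which is implicit in the setting, since the family \(\widetilde{\alpha}\) has \(n-1\) members.)

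First I will fix \(k\geq 0\) and a homogeneous \(D\in\operatorname{Der}_{\alpha^{k+1}}(N)\). The derivation identity then reads
\[
D\bigl([x_1,\ldots,x_n]\bigr)=0
\]
for all homogeneous \(x_1,\ldots,x_n\). By multilinearity this extends to all \(x_j\in N\). Since the bracket is surjective onto \(N\), every element of \(N\) has the form \([x_1,\ldots,x_n]\), so \(D=0\). The condition \(D\circ\alpha=\alpha\circ D\) is automatically satisfied and plays no role. A general derivation is a sum of its homogeneous components, each of which is zero, so \(\operatorname{Der}_{\alpha^{k+1}}(N)=\{0\}\).

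Next, \(\operatorname{Inn}_{\alpha^k}(N)\) is by definition a subspace of \(\operatorname{Der}_{\alpha^{k+1}}(N)\), so it is also \(\{0\}\). This can also be seen directly: an inner derivation \(\operatorname{ad}^k_X\) requires \(\alpha(X)=X\), which forces \(X=0\). It then sends \(y\) to \([0,\ldots,0,\alpha^k(y)]=0\). This includes the case \(k=0\), where \(\alpha^0=\operatorname{id}\) but the \(x_j\) are zero.

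Finally, the equality \(\operatorname{Der}_{\alpha^{k+1}}(N)=\operatorname{Inn}_{\alpha^k}(N)\) holds for all \(k\geq0\). Completeness, which requires trivial center together with all \(\alpha^{k+1}\)-derivations being inner, therefore reduces to \(C(N)=\{0\}\).

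The only potential subtlety is bookkeeping rather than a real obstacle. One must check that the convention \(\alpha^0=\operatorname{id}\) is never invoked on the derivation side, since only \(\alpha^{k+1}\) with \(k+1\geq1\) appears there. One must also note that surjectivity means the image of the bracket on \(N^n\) is all of \(N\), which is exactly what the argument uses.
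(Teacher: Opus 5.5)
Your proposal is correct and follows essentially the same route as the paper. Since $\alpha^{k+1}=0$, every summand of the derivation identity vanishes, and surjectivity of the bracket then forces $D=0$; the fixed-point condition $\alpha(X)=X$ forces $X=0$, so all inner derivations vanish. Your extra remarks make explicit details the paper leaves implicit: extending from homogeneous to arbitrary arguments by multilinearity, decomposing a general derivation into homogeneous parts, and noting that $n\geq 2$.
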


\begin{proof}
	Let \(k\geq0\), and let
	\(
	D\in\operatorname{Der}_{\alpha^{k+1}}(N)
	\)
	be homogeneous. For arbitrary homogeneous elements
	\(x_1,\ldots,x_n\in N\), the
	\(\alpha^{k+1}\)-derivation identity gives
	\[
	\begin{aligned}
	&D([x_1,\ldots,x_n])
	=
	\sum_{i=1}^{n}
	(-1)^{|D|(|x_1|+\cdots+|x_{i-1}|)}
	\\
	&\quad\times
	[\alpha^{k+1}(x_1),\ldots,
	\alpha^{k+1}(x_{i-1}),D(x_i),
	\alpha^{k+1}(x_{i+1}),\ldots,
	\alpha^{k+1}(x_n)].
	\end{aligned}
	\]
	Since \(\alpha=0\) and \(k+1\geq1\), we have
	\(
	\alpha^{k+1}(x_j)=0
	\)
	for each \(1\leq j\leq n\). Hence, each term in the preceding sum contains at least one zero argument. By multilinearity of
	the bracket,
	\(
	D([x_1,\ldots,x_n])=0.
	\)

	Since the bracket is surjective, for every \(x\in N\) there
	exist \(x_1,\ldots,x_n\in N\) such that
	\(
	x=[x_1,\ldots,x_n].
	\)
	Therefore,
	\(
	D(x)=D([x_1,\ldots,x_n])=0.
	\)
	It follows that
	\(
	\operatorname{Der}_{\alpha^{k+1}}(N)=\{0\}.
	\)

	Next, we consider the inner derivations. Let
	\[
	X=(x_1,\ldots,x_{n-1})\in H(N)^{n-1}
	\]
	satisfy
	\(
	\alpha(X)=X.
	\)
	This condition means that
	\(
	\alpha(x_i)=x_i,
	\)
    for each
    \(1\leq i\leq n-1.
	\)
	Since \(\alpha=0\), we obtain
	\(
	x_i=0
	\)
    for each \(1\leq i\leq n-1.\)
	Thus,
	\(
	X=(0,\ldots,0).
	\)
	Consequently, for every \(y\in N\),
	\[
	\operatorname{ad}_X^k(y)
	=
	[x_1,\ldots,x_{n-1},\alpha^k(y)]
	=0.
	\]
	Hence every inner \(\alpha^{k+1}\)-derivation vanishes, and
	therefore
	\(
	\operatorname{Inn}_{\alpha^k}(N)=\{0\}.
	\)
Thus, we have proved
\(\operatorname{Der}_{\alpha^{k+1}}(N)=\operatorname{Inn}_{\alpha^k}(N)=\{0\}\)
for every \(k\geq0\).
Consequently, the derivation condition in the definition of completeness is automatically satisfied. Therefore, \(N\) is complete if and only if its center is trivial, that is, \( C(N)=\{0\}.\)
\end{proof}

The above proposition shows that, when the twisting map is zero and
the bracket is surjective, the derivation condition in the definition
of completeness is automatically satisfied. In this case, completeness is equivalent to the triviality of the center.

The following example shows that the preceding criterion produces
complete \(n\)-ary Hom-Nambu superalgebras that are not
\(n\)-Hom-Lie superalgebras.

\begin{example}\label{ex:complete-nonskew-Hom-Nambu}
Let \(n\geq2\), and let
\[
N=N_{\bar 0}
=
\operatorname{span}_{\mathbb{K}}\{e_1,e_2\},
\qquad
N_{\bar 1}=\{0\}.
\]
Define
\(
\alpha=0
\)
and an \(n\)-linear bracket on the basis elements by
\[
[e_1,\ldots,e_1]=e_1,
\qquad
[e_2,\ldots,e_2]=e_2,
\]
whereas every basic bracket containing both \(e_1\) and \(e_2\) is zero.
The bracket is extended to \(N\) using multilinearity.
First, we verify that
\(
\bigl(N,[\cdot,\ldots,\cdot],\alpha\bigr)
\)
is a multiplicative \(n\)-ary Hom-Nambu superalgebra. Because \(N\) is purely even, the bracket is even. Moreover, for all
\(x_1,\ldots,x_n\in N\),
\[
\alpha([x_1,\ldots,x_n])
=
0
=
[\alpha(x_1),\ldots,\alpha(x_n)].
\]
Thus, multiplicativity holds.
The super-Hom-Nambu identity is also satisfied. Indeed, because \(\alpha=0\) and \(n\geq2\), every bracket on either side of the super-Hom-Nambu identity contains at least one zero argument.
Therefore, by multilinearity, both sides vanish.
Next, we show that the bracket is surjective. Let
\(
x=ae_1+be_2\in N.
\)
Then
\[
\left[
\underbrace{e_1+e_2,\ldots,e_1+e_2}_{n-1\text{ arguments}},
ae_1+be_2
\right]
=
ae_1+be_2
=
x.
\]
Indeed, all mixed terms in the multilinear expansion vanish, and only
\[
a[e_1,\ldots,e_1]=ae_1, \quad
b[e_2,\ldots,e_2]=be_2
\]
remain.
We now determine the positional centers. Fix
\(1\leq i\leq n\), and let
\[
z=ae_1+be_2\in C_i(N).
\]
Placing \(z\) in the \(i\)-th position and \(e_1\) in every other
position gives
\[
[e_1,\ldots,e_1,
\underset{i\text{-th position}}{z},
e_1,\ldots,e_1]
=
ae_1.
\]
Because \(z\in C_i(N)\), this bracket must vanish, and hence \(a=0\).
Similarly,
\[
[e_2,\ldots,e_2,
\underset{i\text{-th position}}{z},
e_2,\ldots,e_2]
=
be_2,
\]
and therefore \(b=0\). Thus
\(
C_i(N)=\{0\}
\)
for each \(1\leq i\leq n\). Consequently,
\[
C(N)
=
\bigcap_{i=1}^{n}C_i(N)
=
\{0\}.
\]
Since the bracket is surjective and \(\alpha=0\),
Proposition~\ref{surcom} implies that
\[
\operatorname{Der}_{\alpha^{k+1}}(N)
=
\operatorname{Inn}_{\alpha^k}(N)
=
\{0\}
\]
for every \(k\geq0\). Therefore,
\(
\bigl(N,[\cdot,\ldots,\cdot],\alpha\bigr)
\)
is complete.
Finally, this algebra is not an \(n\)-Hom-Lie superalgebra. Indeed,
total super-skew-symmetry would imply
\(
[e_1,e_1,x_3,\ldots,x_n]=0.
\)
However,
\[
[e_1,\ldots,e_1]=e_1\neq0.
\]
Thus, the bracket is not totally super-skew-symmetric.
\end{example}

\begin{remark}
Example~\ref{ex:complete-nonskew-Hom-Nambu} shows that completeness for
\(n\)-ary Hom-Nambu superalgebras is genuinely more general than completeness in the \(n\)-Hom-Lie category.
\end{remark}


\begin{proposition}
	Let
	\(
	(N,[\cdot,\ldots,\cdot],\alpha)
	\)
	be a multiplicative \(n\)-ary Hom-Nambu superalgebra, and let \(I\) be a
	Hom-ideal of \(N\). Then the centralizer
	\(
	C_N(I)=\bigcap_{i=1}^{n}C_i(I)
	\)
	is a graded subspace of \(N\). More precisely,
	\[
	C_N(I)
	=
	\bigl(C_N(I)\cap N_{\bar{0}}\bigr)
	\oplus
	\bigl(C_N(I)\cap N_{\bar{1}}\bigr).
	\]
\end{proposition}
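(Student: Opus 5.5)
The plan is to show that each positional centralizer \(C_i(I)\) is a graded subspace; the intersection of graded subspaces is then graded. First, \(C_i(I)\) is a linear subspace: the bracket is multilinear, so the condition \([x_1,\ldots,x_{i-1},z,x_{i+1},\ldots,x_n]=0\) for all \(x_j\in I\) is linear in \(z\). The inclusion
\[
\bigl(C_i(I)\cap N_{\bar 0}\bigr)\oplus\bigl(C_i(I)\cap N_{\bar 1}\bigr)\subseteq C_i(I)
\]
is then immediate, and the sum is direct because \(N_{\bar 0}\cap N_{\bar 1}=0\).

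For the reverse inclusion, take \(z\in C_i(I)\) and write \(z=z_{\bar 0}+z_{\bar 1}\) with \(z_{\bar k}\in N_{\bar k}\). We must show \(z_{\bar 0},z_{\bar 1}\in C_i(I)\). Since \(I\) is a Hom-ideal, it is in particular a Hom-subalgebra, hence a graded subspace \(I=(I\cap N_{\bar 0})\oplus(I\cap N_{\bar 1})\). By multilinearity, it therefore suffices to test \(z_{\bar 0}\) and \(z_{\bar 1}\) against tuples of homogeneous elements \(x_j\in I\). For such a fixed tuple, let \(s=\sum_{j\neq i}|x_j|\). We have
\[
0=[x_1,\ldots,z,\ldots,x_n]=[x_1,\ldots,z_{\bar 0},\ldots,x_n]+[x_1,\ldots,z_{\bar 1},\ldots,x_n].
\]
Because the bracket is even, the first summand lies in \(N_{s}\) and the second in \(N_{s+\bar 1}\). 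These are distinct homogeneous components, so the sum of these two terms can vanish only if each term vanishes. Hence \(z_{\bar 0},z_{\bar 1}\in C_i(I)\).

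It follows that each \(C_i(I)\) is graded. Finally, if \(z\in C_N(I)=\bigcap_i C_i(I)\), then its homogeneous components lie in every \(C_i(I)\), and so they lie in \(C_N(I)\). This gives the stated decomposition. The only delicate point is the reduction to homogeneous test elements, which requires knowing that \(I\) itself is graded. That holds by definition of a Hom-subalgebra, and no multiplicativity or Hom-Nambu identity is needed anywhere in the argument.
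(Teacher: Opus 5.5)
Your proof is correct and follows essentially the same route as the paper: decompose \(z=z_{\bar 0}+z_{\bar 1}\), test against homogeneous elements of \(I\), and use the evenness of the bracket to place the two resulting terms in distinct parity components, so each must vanish. The only difference is organizational. You first show each \(C_i(I)\) is graded and then intersect, whereas the paper argues directly for \(C_N(I)\) one position at a time. You also point out explicitly that the gradedness of \(I\) justifies the reduction to homogeneous test elements, a step the paper uses without comment.
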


\begin{proof}
	Let
	\(
	x=x_{\bar{0}}+x_{\bar{1}}\in C_N(I),
	\)
	where
	\(
	x_{\bar{0}}\in N_{\bar{0}}
	\)
    and
    \(
	x_{\bar{1}}\in N_{\bar{1}}.
	\)
	We prove that both \(x_{\bar{0}}\) and \(x_{\bar{1}}\) belong to
	\(C_N(I)\).

	Fix \(1\leq i\leq n\), and let \( y_1,\ldots,y_{i-1},y_{i+1},\ldots,y_n\in I \) be homogeneous elements. Since
	\(
	x\in C_N(I)\subseteq C_i(I),
	\)
	we have
	\(
	[y_1,\ldots,y_{i-1},x,
	y_{i+1},\ldots,y_n]=0.
	\)
	By linearity of the bracket,
	\[
	\begin{aligned}
	0
	&=
	[y_1,\ldots,y_{i-1},x_{\bar{0}}+x_{\bar{1}},
	y_{i+1},\ldots,y_n]
	\\
	&=
	[y_1,\ldots,y_{i-1},x_{\bar{0}},
	y_{i+1},\ldots,y_n]
	\\
	&\quad+
	[y_1,\ldots,y_{i-1},x_{\bar{1}},
	y_{i+1},\ldots,y_n].
	\end{aligned}
	\]
Set
\(
	\bar \varepsilon
	=
	|y_1|+\cdots+|y_{i-1}|
	+|y_{i+1}|+\cdots+|y_n|
	\in\mathbb Z_2.
	\)
	Since the bracket is even,
	\(
	[y_1,\ldots,y_{i-1},x_{\bar{0}},
	y_{i+1},\ldots,y_n]
	\in N_{\bar \varepsilon},
	\)
	whereas
	\[
	[y_1,\ldots,y_{i-1},x_{\bar{1}},
	y_{i+1},\ldots,y_n]
	\in N_{\bar{1}+\bar \varepsilon}.
	\]
	These two terms belong to distinct homogeneous components of \(N\). Therefore, each term must vanish,
	\[
	[y_1,\ldots,y_{i-1},x_{\bar{0}},
	y_{i+1},\ldots,y_n]=0,\quad
	[y_1,\ldots,y_{i-1},x_{\bar{1}},
	y_{i+1},\ldots,y_n]=0.
	\]

	By multilinearity, these equalities hold for arbitrary, not
	necessarily homogeneous, elements
	\(
	y_j\in I,\qquad j\neq i.
	\)
	Hence,
	\(
	x_{\bar{0}},x_{\bar{1}}\in C_i(I).
	\)
	Since the position \(i\) was arbitrary, we obtain
	\(
	x_{\bar{0}},x_{\bar{1}}
	\in\bigcap\limits_{i=1}^{n}C_i(I)
	=C_N(I).
	\)
	Thus,
	\[
	C_N(I)
	\subseteq
	\bigl(C_N(I)\cap N_{\bar{0}}\bigr)
	+
	\bigl(C_N(I)\cap N_{\bar{1}}\bigr).
	\]

	The reverse inclusion is immediate because \(C_N(I)\) is a
	linear subspace of \(N\). Moreover,
	\(
	\bigl(C_N(I)\cap N_{\bar{0}}\bigr)
	\cap
	\bigl(C_N(I)\cap N_{\bar{1}}\bigr)
	=\{0\},
	\)
	since
	\(
	N_{\bar{0}}\cap N_{\bar{1}}=\{0\}.
	\)
	Therefore,
	\(
	C_N(I)
	=
	\bigl(C_N(I)\cap N_{\bar{0}}\bigr)
	\oplus
	\bigl(C_N(I)\cap N_{\bar{1}}\bigr).
	\)
\end{proof}
The above result shows that the centralizer inherits the natural
$\mathbb{Z}_2$-grading from $N$.


We now introduce the notion of completeness for multiplicative $n$-ary Hom-Nambu superalgebras and provide an example.
Completeness expresses the idea that the center is trivial and that all
$\alpha^{k+1}$-derivations are inner for every $k\geq 0$, generalizing the classical Lie algebra concept to the Hom and super contexts.

\begin{definition}\label{def:complete-nary-Hom-Nambu}
Let
\(
(N,[\cdot,\ldots,\cdot],\alpha)
\)
be a multiplicative \(n\)-ary Hom-Nambu superalgebra. We say that
\(N\) is complete if
\(
C(N)=\{0\}
\)
and
\[
\operatorname{Der}_{\alpha^{k+1}}(N)
=
\operatorname{Inn}_{\alpha^k}(N)
\]
for every integer \(k\geq0.\)

If \(N\) is an \(n\)-Hom-Lie superalgebra, total
super-skew-symmetry implies
\[
C_1(N)=\cdots=C_n(N).
\]
Consequently, the preceding definition coincides with the usual
definition of completeness for multiplicative \(n\)-Hom-Lie
superalgebras.
\end{definition}

In other words, a complete multiplicative $n$-ary Hom-Nambu
superalgebra has no nonzero central elements, and every
$\alpha^{k+1}$-derivation is inner for each integer $k\geq0$.

\begin{example}\label{ex:standard-simple-n-lie}
Let
\(
\mathfrak{g}=\mathfrak{g}_{\bar 0}
\)
be a purely even linear space with basis
\[
\{e_1,\ldots,e_{n+1}\},
\qquad n\geq 2.
\]
Let
\(
\alpha=\operatorname{id}_{\mathfrak{g}},
\)
and define the $n$-linear bracket by
\[
[e_1,\ldots,\widehat{e_i},\ldots,e_{n+1}]
=
(-1)^i e_i,
\qquad
1\leq i\leq n+1,
\]
where $\widehat{e_i}$ denotes the omission of $e_i$. The remaining
brackets are determined by skew-symmetry and multilinearity. Then
\(
(\mathfrak{g},[\cdot,\ldots,\cdot],\alpha)
\)
is a multiplicative $n$-ary Hom-Lie superalgebra.
\end{example}

\begin{proposition}\label{prop:standard-n-lie-complete}
Let $\mathfrak{g}$ be the $(n+1)$-dimensional $n$-Lie algebra
defined in Example~\ref{ex:standard-simple-n-lie}, where
$n\geq 2$, and let
\(
\alpha=\operatorname{id}_{\mathfrak{g}}.
\)
Then
\(
(\mathfrak{g},[\cdot,\ldots,\cdot],\alpha)
\)
is complete.
\end{proposition}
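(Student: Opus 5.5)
The plan has two parts: show that the center of the simple $(n+1)$-dimensional $n$-Lie algebra $\mathfrak{g}$ vanishes, and show that every $\alpha^{k+1}$-derivation is inner. Since $\alpha=\operatorname{id}_{\mathfrak{g}}$, we have $\alpha^{k+1}=\operatorname{id}$ for every $k\geq 0$. So an $\alpha^{k+1}$-derivation is just an ordinary derivation of the $n$-Lie algebra; the commutation condition $D\circ\alpha=\alpha\circ D$ is automatic. Likewise every tuple $X$ satisfies $\alpha(X)=X$, so $\operatorname{Inn}_{\alpha^k}(\mathfrak{g})$ is the span of all $\operatorname{ad}_X(y)=[x_1,\ldots,x_{n-1},y]$. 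The statement therefore reduces to the classical fact that the simple $(n+1)$-dimensional Filippov algebra has trivial center and only inner derivations, and I would prove this directly. Since $\mathfrak{g}$ is purely even, all signs are ordinary and parity plays no role.

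\textbf{Center.} By total skew-symmetry all positional centers coincide (the earlier proposition on $n$-Hom-Lie superalgebras), so it suffices to treat the last position. Let $z=\sum_j c_je_j$ and fix $i$. Bracket $z$ with the $n-1$ basis vectors $e_l$, $l\neq i,j$, for a chosen $j\neq i$. The only surviving term is $\pm c_j e_i$. Hence $c_j=0$ for every $j$, and $C(\mathfrak{g})=\{0\}$.

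\textbf{Derivations.} First I would identify $\mathfrak{g}$ with $\mathbb{K}^{n+1}$ and write the bracket as a generalized cross product: $[x_1,\ldots,x_n]$ is, up to a global sign convention, the vector $v$ with $\langle v,w\rangle=\det(x_1,\ldots,x_n,w)$ for the standard bilinear form. Then $\operatorname{ad}_{e_{i_1},\ldots,e_{i_{n-1}}}$ omits exactly two indices $p<q$ and acts as $\pm(E_{pq}-E_{qp})$. So $\operatorname{Inn}(\mathfrak{g})$ is the full space $\mathfrak{so}(n+1)$ of matrices skew with respect to the standard form, of dimension $\binom{n+1}{2}$.

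Second, I would show that any derivation $D$ lies in $\mathfrak{so}(n+1)$. Write $D=(d_{ij})$ and apply the derivation identity to $[e_1,\ldots,\widehat{e_i},\ldots,e_{n+1}]=(-1)^ie_i$. Take the $e_i$-coefficient of both sides: the left gives $(-1)^i d_{ii}$, while the right gives $(-1)^i\sum_{l\neq i}d_{ll}$. Hence $d_{ii}=\operatorname{tr}D-d_{ii}$ for every $i$, so all $d_{ii}$ equal $\tfrac12\operatorname{tr}D$. Summing gives $\operatorname{tr}D=\tfrac{n+1}{2}\operatorname{tr}D$, so $\operatorname{tr}D=0$ (as $n\geq2$) and all diagonal entries vanish.

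Next take the $e_j$-coefficient with $j\neq i$ in the same identity. On the right, only the term where $D$ acts on $e_j$ can contribute, namely through the component of $D(e_j)$ along $e_i$. This gives a relation $d_{ji}=\pm d_{ij}$, and checking the sign from the determinant formula should give $d_{ji}=-d_{ij}$, i.e.\ $D\in\mathfrak{so}(n+1)$. Combined with the first step, $\operatorname{Der}_{\alpha^{k+1}}(\mathfrak{g})=\operatorname{Inn}_{\alpha^k}(\mathfrak{g})$ for all $k$, which proves completeness.

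The main obstacle is the sign bookkeeping in this off-diagonal comparison, together with confirming that the inner derivations really span all $E_{pq}-E_{qp}$ with consistent signs. I would sidestep most of it by using the invariant formulation: $D$ is a derivation iff $D^{T}+D=(\operatorname{tr}D)\,I$, via $\det(Dx_1,\ldots)+\cdots=\operatorname{tr}(D)\det(\ldots)$. Combined with $\operatorname{tr}D=0$ from the diagonal step, this forces $D$ to be skew.
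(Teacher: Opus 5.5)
Your proof is correct, and it starts from the same reduction as the paper. Because $\alpha=\operatorname{id}$, for every $k$ the space $\operatorname{Der}_{\alpha^{k+1}}(\mathfrak g)$ is the space of ordinary derivations, and $\operatorname{Inn}_{\alpha^k}(\mathfrak g)$ is the span of the ordinary $\operatorname{ad}_X$. So everything comes down to the classical facts about $A_{n+1}$.

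The difference lies in how those facts are obtained. The paper quotes them from Filippov and de Azc\'arraga--Izquierdo:
\begin{itemize}
\item $\mathfrak g$ is simple and nonabelian, and the center is an ideal, so $C(\mathfrak g)=\{0\}$;
\item every derivation of a simple $n$-Lie algebra is inner, with $\operatorname{Der}=\operatorname{Inn}\cong\mathfrak{so}(n+1)$.
\end{itemize}
You prove both directly from the cross-product description. This makes the argument self-contained and uniform over any field of characteristic zero, at the price of some sign bookkeeping.

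Your invariant identity is the efficient way to handle that bookkeeping. Pairing the derivation identity with $w$ and using
$\sum_i\det(x_1,\ldots,Dx_i,\ldots,x_n,w)+\det(x_1,\ldots,x_n,Dw)=\operatorname{tr}(D)\det(x_1,\ldots,x_n,w)$
gives $D^{T}+D=(\operatorname{tr}D)I$. Taking traces then gives $(n-1)\operatorname{tr}D=0$ at once, so your separate diagonal computation becomes unnecessary. The global sign $(-1)^{n+1}$ between the example's bracket and the determinant cross product is harmless. Rescaling the bracket changes neither the set of derivations nor the span of the $\operatorname{ad}_X$.

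If you keep the coordinate version, the sign you left as ``should give'' does work out. The $e_j$-coefficient yields $(-1)^i d_{ji}=-(-1)^i d_{ij}$, hence $d_{ji}=-d_{ij}$.

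One small inaccuracy is in the center step. Bracketing $z$ with the $e_l$, $l\neq i,j$, leaves two terms, $\pm c_je_i$ and $\pm c_ie_j$, not just one. Since $e_i$ and $e_j$ are linearly independent, you still get $c_j=0$, so the conclusion stands.
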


\begin{proof}
The $n$-Lie algebra $\mathfrak{g}$ is the standard simple
$(n+1)$-dimensional $n$-Lie algebra $A_{n+1}$
\cite{filippov1985,deazcarraga2010}.
Since the center $C(\mathfrak{g})$ is an ideal of
$\mathfrak{g}$ and $\mathfrak{g}$ is simple and nonabelian, we have
\(
C(\mathfrak{g})=\{0\}.
\)
Moreover, every derivation of a simple $n$-Lie algebra is inner \cite{deazcarraga2010}. Hence,
\(
\operatorname{Der}(\mathfrak{g})
=
\operatorname{Inn}(\mathfrak{g}).
\)
In fact,
\(
\operatorname{Der}(\mathfrak{g})
=
\operatorname{Inn}(\mathfrak{g})
\cong
\mathfrak{so}(n+1)
\)
by \cite{deazcarraga2010}.

Finally, since
\(
\alpha=\operatorname{id}_{\mathfrak{g}},
\)
for every $k\geq0$ we have
\(
\operatorname{Der}_{\alpha^{k+1}}(\mathfrak{g})
=
\operatorname{Der}(\mathfrak{g})
\)
and
\(
\operatorname{Inn}_{\alpha^k}(\mathfrak{g})
=
\operatorname{Inn}(\mathfrak{g}).
\)
Consequently,
\(
\operatorname{Der}_{\alpha^{k+1}}(\mathfrak{g})
=
\operatorname{Inn}_{\alpha^k}(\mathfrak{g})
\)
for every $k\geq0$. Therefore, $\mathfrak{g}$ is complete.
\end{proof}

This classical construction provides a concrete example of a complete
$n$-Lie algebra (and hence a complete multiplicative $n$-Hom-Lie algebra when equipped with the identity map).
This illustrates that completeness can occur even in low-dimensional and fully symmetric structures.

\begin{lemma}\label{lem:twisting-nary-Hom-Nambu}
	Let
	\(
	(N,[\cdot,\ldots,\cdot],\alpha)
	\)
	be a multiplicative \(n\)-ary Hom-Nambu superalgebra, and let
	\(
	\beta:N\longrightarrow N
	\)
	be an endomorphism. Define
	\[
	[x_1,\ldots,x_n]_{\beta}
	=
	\beta([x_1,\ldots,x_n]),
	\qquad
	\widetilde{\alpha}
	=
	\beta\circ\alpha.
	\]
	Then
	\(
	\bigl(
	N,
	[\cdot,\ldots,\cdot]_{\beta},
	\widetilde{\alpha}
	\bigr)
	\)
	is a multiplicative \(n\)-ary Hom-Nambu superalgebra.

	If the original bracket is totally super-skew-symmetric, then the twisted structure is a multiplicative \(n\)-Hom-Lie superalgebra.
\end{lemma}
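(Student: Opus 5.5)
The plan is to verify the defining properties of a multiplicative $n$-ary Hom-Nambu superalgebra one at a time for $\bigl(N,[\cdot,\ldots,\cdot]_\beta,\widetilde{\alpha}\bigr)$. Throughout, I use only that $\beta$ is an endomorphism. By the definition of morphism of $n$-ary Hom-Nambu superalgebras, this means three things:
\begin{itemize}
\item $\beta$ is even;
\item $\beta([x_1,\ldots,x_n])=[\beta(x_1),\ldots,\beta(x_n)]$;
\item $\beta\circ\alpha=\alpha\circ\beta$, since in the multiplicative case all $\alpha_i=\alpha$.
\end{itemize}

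\emph{Grading.} The new bracket $\beta\circ[\cdot,\ldots,\cdot]$ is $n$-linear. It is even because it is a composition of even maps, so $[N_{\bar k_1},\ldots,N_{\bar k_n}]_\beta\subseteq N_{\bar k_1+\cdots+\bar k_n}$. Likewise $\widetilde{\alpha}=\beta\circ\alpha$ is even.

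\emph{Multiplicativity.} Expand both sides:
\[
\widetilde{\alpha}([x_1,\ldots,x_n]_\beta)=\beta\alpha\beta([x_1,\ldots,x_n]),
\]
\[
[\widetilde{\alpha}(x_1),\ldots,\widetilde{\alpha}(x_n)]_\beta=\beta\bigl([\beta\alpha(x_1),\ldots,\beta\alpha(x_n)]\bigr)=\beta^2\alpha([x_1,\ldots,x_n]).
\]
The second equality uses multiplicativity of $\beta$ and then of $\alpha$. The two expressions agree because $\alpha\beta=\beta\alpha$.

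\emph{Super-Hom-Nambu identity.} This is the main step. Take homogeneous $x_1,\ldots,x_{n-1}$ and $y_1,\ldots,y_n$. For the left-hand side, I pull the inner $\beta$ outward using multiplicativity of $\beta$ and rewrite $\beta\alpha(x_j)=\beta(\alpha(x_j))$:
\[
[\widetilde{\alpha}(x_1),\ldots,\widetilde{\alpha}(x_{n-1}),[y_1,\ldots,y_n]_\beta]_\beta
=\beta^2\bigl([\alpha(x_1),\ldots,\alpha(x_{n-1}),[y_1,\ldots,y_n]]\bigr).
\]
Each term on the right-hand side becomes, in the same way,
\[
\beta^2\bigl([\alpha(y_1),\ldots,\alpha(y_{i-1}),[x_1,\ldots,x_{n-1},y_i],\alpha(y_{i+1}),\ldots,\alpha(y_n)]\bigr).
\]
The signs $(-1)^{|X||Y|_{i-1}}$ are unchanged, since $\beta$ and $\alpha$ preserve parity. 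So the twisted identity is exactly $\beta^2$ applied to the identity \eqref{shni} for $N$, and therefore it holds.

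\emph{Where care is needed.} The only delicate point is bookkeeping: in each argument, the factor $\beta$ coming from $\widetilde{\alpha}$ and the factor $\beta$ coming from the inner twisted bracket must be extracted together into a single outer $\beta$. This is where multiplicativity of $\beta$ is used, once per bracket level.

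\emph{Total super-skew-symmetry.} Suppose the original bracket is totally super-skew-symmetric. Apply the linear map $\beta$ to both sides of
\[
[\ldots,x_i,x_{i+1},\ldots]=-(-1)^{|x_i||x_{i+1}|}[\ldots,x_{i+1},x_i,\ldots].
\]
This gives the same relation for $[\cdot,\ldots,\cdot]_\beta$. Hence the twisted structure is a multiplicative $n$-Hom-Lie superalgebra.
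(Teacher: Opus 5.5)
Your proposal is correct and follows essentially the same route as the paper. Both arguments reduce the twisted super-Hom-Nambu identity to $\beta^2$ applied to the original identity (using that $\beta$ preserves the bracket), check multiplicativity via $\beta\alpha\beta=\beta^2\alpha$, and obtain super-skew-symmetry by applying $\beta$; your explicit check that the new bracket and $\widetilde{\alpha}$ are even is a small detail the paper leaves implicit.
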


\begin{proof}
	Since \(\beta\) is an endomorphism, it is even and satisfies
	\[
	\beta([x_1,\ldots,x_n])
	=
	[\beta(x_1),\ldots,\beta(x_n)]
	\]
	and
	\(
	\beta\circ\alpha
	=
	\alpha\circ\beta.
	\)

	Let
	\(
	X=(x_1,\ldots,x_{n-1})\in H(N)^{n-1}
	,\
	Y=(y_1,\ldots,y_n)\in H(N)^n.
	\)
	For \(1\leq i\leq n\), let
	\(
	T_i
	={}
	[\alpha(y_1),\ldots,\alpha(y_{i-1}),
	[x_1,\ldots,x_{n-1},y_i],
	\
	\alpha(y_{i+1}),\ldots,\alpha(y_n)].
	\)
Using the definition of the twisted bracket and the fact that
	\(\beta\) preserves the original bracket, we have
	\[
	\begin{aligned}
	&[\widetilde{\alpha}(x_1),\ldots,
	\widetilde{\alpha}(x_{n-1}),
	[y_1,\ldots,y_n]_{\beta}]_{\beta}
	\\
	&\qquad=
	\beta^2
	\bigl(
	[\alpha(x_1),\ldots,\alpha(x_{n-1}),
	[y_1,\ldots,y_n]]
	\bigr).
	\end{aligned}
	\]
	By the super-Hom-Nambu identity,
	\[
	[\widetilde{\alpha}(x_1),\ldots,
	\widetilde{\alpha}(x_{n-1}),
	[y_1,\ldots,y_n]_{\beta}]_{\beta}
	=
	\sum_{i=1}^{n}
	(-1)^{|X||Y|_{i-1}}
	\beta^2(T_i).
	\]
Moreover,
	\[
	\beta^2(T_i)
	={}
	[\widetilde{\alpha}(y_1),\ldots,
	\widetilde{\alpha}(y_{i-1}),
	[x_1,\ldots,x_{n-1},y_i]_{\beta},
	\widetilde{\alpha}(y_{i+1}),\ldots,
	\widetilde{\alpha}(y_n)]_{\beta}.
	\]
	Therefore, the twisted bracket satisfies the
	super-Hom-Nambu identity.

	Next, we verify the multiplicativity. For arbitrary
	\(x_1,\ldots,x_n\in N\),
	\[
	\begin{aligned}
	\widetilde{\alpha}
	([x_1,\ldots,x_n]_{\beta})
	&=
	(\beta\circ\alpha)
	\bigl(\beta([x_1,\ldots,x_n])\bigr)
	\\
	&=
	\beta^2
	\bigl(\alpha([x_1,\ldots,x_n])\bigr)
	\\
	&=
	\beta^2
	\bigl(
	[\alpha(x_1),\ldots,\alpha(x_n)]
	\bigr)
	\\
	&=
	[\widetilde{\alpha}(x_1),\ldots,
	\widetilde{\alpha}(x_n)]_{\beta}.
	\end{aligned}
	\]
	Hence, the twisted structure is multiplicative.
Finally, suppose that the original bracket is totally
	super-skew-symmetric. For \(1\leq i\leq n-1\), we have
	\[
	[x_1,\ldots,x_i,x_{i+1},\ldots,x_n]_{\beta}
	=
	-(-1)^{|x_i||x_{i+1}|}
	[x_1,\ldots,x_{i+1},x_i,\ldots,x_n]_{\beta}.
	\]
	Thus, the twisted bracket is also totally
	super-skew-symmetric, and the resulting structure is a
	multiplicative \(n\)-Hom-Lie superalgebra.
\end{proof}


Next, we study the behavior of the center under twisting by powers
of the structure map~$\alpha$.

\begin{proposition}\label{prop:center-twist}
	Let
	\(
	(N,[\cdot,\ldots,\cdot],\alpha)
	\)
	be a multiplicative \(n\)-ary Hom-Nambu superalgebra with a
	bijective twisting map \(\alpha\). For every \(k\in\mathbb{Z}\),
	define
	\[
	[x_1,\ldots,x_n]_{\alpha^k}
	=
	\alpha^k([x_1,\ldots,x_n])
	\]
	and set
	\(
	N_{\alpha^k}
	=
	\bigl(
	N,[\cdot,\ldots,\cdot]_{\alpha^k},
	\alpha^{k+1}
	\bigr).
	\)
	Then \(N_{\alpha^k}\) is a multiplicative \(n\)-ary
	Hom-Nambu superalgebra. Moreover, for each
	\(1\leq i\leq n\),
	\[
	C_i(N_{\alpha^k})=C_i(N).
	\]
	Consequently,
	\(
	C(N_{\alpha^k})=C(N),
	\)
	and if \(N\) is complete, then
	\(
	C(N_{\alpha^k})=\{0\}.
	\)
\end{proposition}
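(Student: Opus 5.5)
The first assertion will follow from Lemma~\ref{lem:twisting-nary-Hom-Nambu} once I check that $\beta=\alpha^k$ is an endomorphism of $(N,[\cdot,\ldots,\cdot],\alpha)$ for every $k\in\mathbb{Z}$.

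For $k\geq0$ this is immediate. The map $\alpha^k$ is even and commutes with $\alpha$. Iterating multiplicativity gives $\alpha^k([x_1,\ldots,x_n])=[\alpha^k(x_1),\ldots,\alpha^k(x_n)]$. For $k<0$, write $k=-m$ with $m>0$. Then $\alpha^{-1}$ is even, because $\alpha$ preserves each graded component and is bijective, so it restricts to bijections $N_{\bar i}\to N_{\bar i}$. It also commutes with $\alpha$. For multiplicativity, set $y_j=\alpha^{-1}(x_j)$. Then
\[
\alpha\bigl([y_1,\ldots,y_n]\bigr)=[x_1,\ldots,x_n],
\]
so $\alpha^{-1}([x_1,\ldots,x_n])=[\alpha^{-1}(x_1),\ldots,\alpha^{-1}(x_n)]$. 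Hence $\alpha^{-m}$ is an endomorphism as well. Lemma~\ref{lem:twisting-nary-Hom-Nambu} with $\beta=\alpha^k$ then says that $N_{\alpha^k}$, whose twisting map is $\beta\circ\alpha=\alpha^{k+1}$, is a multiplicative $n$-ary Hom-Nambu superalgebra.

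For the positional centers, fix $i$ and $z\in N$. By definition, $z\in C_i(N_{\alpha^k})$ means
\[
\alpha^k\bigl([x_1,\ldots,x_{i-1},z,x_{i+1},\ldots,x_n]\bigr)=0
\quad\text{for all } x_j\in N.
\]
Since $\alpha^k$ is injective for every $k\in\mathbb{Z}$, this holds exactly when the inner bracket vanishes for all $x_j$, that is, exactly when $z\in C_i(N)$. So $C_i(N_{\alpha^k})=C_i(N)$. Intersecting over $1\leq i\leq n$ gives $C(N_{\alpha^k})=C(N)$. Finally, if $N$ is complete, Definition~\ref{def:complete-nary-Hom-Nambu} gives $C(N)=\{0\}$, hence $C(N_{\alpha^k})=\{0\}$.

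The only step needing real care is the negative-power case, where one must confirm that $\alpha^{-1}$ is still an even, bracket-preserving map commuting with $\alpha$, so that the twisting lemma applies. The injectivity argument for the centers is routine.
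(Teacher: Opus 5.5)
Your proposal is correct and follows essentially the same route as the paper: you show that every integral power $\alpha^k$ is an even, bracket-preserving map commuting with $\alpha$, apply Lemma~\ref{lem:twisting-nary-Hom-Nambu} with $\beta=\alpha^k$, and then use injectivity of $\alpha^k$ to identify the positional centers before intersecting over $i$. Your explicit check that $\alpha^{-1}$ is even and multiplicative fills in a detail that the paper states in a single line.
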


\begin{proof}
	Since \(\alpha\) is bijective and multiplicative, every integral
	power
	\[
	\alpha^k\colon N\longrightarrow N,
	\qquad k\in\mathbb{Z},
	\]
	is an even bijective morphism of the \(n\)-ary bracket. Moreover,
	\(\alpha^k\) commutes with \(\alpha\). Therefore, by the preceding
	twisting result,
	\[
	N_{\alpha^k}
	=
	\bigl(
	N,[\cdot,\ldots,\cdot]_{\alpha^k},
	\alpha^{k+1}
	\bigr)
	\]
	is a multiplicative \(n\)-ary Hom-Nambu superalgebra.

	We now compare the positional centers. Fix
	\(
	1\leq i\leq n.
	\)
	Let \(z\in C_i(N_{\alpha^k})\). Then, for arbitrary elements
	\(
	x_1,\ldots,x_{i-1},x_{i+1},\ldots,x_n\in N,
	\)
	we have
	\[
	[x_1,\ldots,x_{i-1},z,
	x_{i+1},\ldots,x_n]_{\alpha^k}=0.
	\]
	By the definition of the twisted bracket,
	\[
	\alpha^k\bigl(
	[x_1,\ldots,x_{i-1},z,
	x_{i+1},\ldots,x_n]
	\bigr)=0.
	\]
	Since \(\alpha^k\) is injective, it follows that
	\[
	[x_1,\ldots,x_{i-1},z,
	x_{i+1},\ldots,x_n]=0.
	\]
	Therefore,
	\(
	z\in C_i(N),
	\)
	and hence
	\(
	C_i(N_{\alpha^k})\subseteq C_i(N).
	\)

	Conversely, let \(z\in C_i(N)\). Then for all \(x_j\in N\), \(j\neq i\),
	\[
	[x_1,\ldots,x_{i-1},z,
	x_{i+1},\ldots,x_n]=0.
	\]
	Consequently,
	\[
	\begin{aligned}
	&[x_1,\ldots,x_{i-1},z,
	x_{i+1},\ldots,x_n]_{\alpha^k}
	\\
	&\quad=
	\alpha^k\bigl(
	[x_1,\ldots,x_{i-1},z,
	x_{i+1},\ldots,x_n]
	\bigr) =0.
	\end{aligned}
	\]
	Thus,
	\(
	z\in C_i(N_{\alpha^k}),
	\)
	and therefore
	\(
	C_i(N)\subseteq C_i(N_{\alpha^k}).
	\)
	We conclude that
	\(
	C_i(N_{\alpha^k})=C_i(N)
	\)
	for each \(1\leq i\leq n\).

	Taking the intersection over all positions gives
	\[
	C(N_{\alpha^k})
	=
	\bigcap_{i=1}^{n}C_i(N_{\alpha^k})
	=
	\bigcap_{i=1}^{n}C_i(N)
	=
	C(N).
	\]
Finally, if \(N\) is complete, then
	\(
	C(N)=\{0\}.
	\)
	Hence,
	\(
	C(N_{\alpha^k})=C(N)=\{0\}.
	\)
\end{proof}
The above proposition shows that the triviality of the center is
preserved under twisting by powers of a bijective structure map.
Preservation of completeness would additionally require a comparison
of the corresponding derivation and inner derivation spaces.

\begin{theorem}\label{thm-multHomLiedirdecompideals}
	Let
	\(
	(\mathfrak{g},[\cdot,\ldots,\cdot],\alpha)
	\)
	be a multiplicative \(n\)-ary Hom-Lie superalgebra such that
	\(
	\mathfrak{g}=I\oplus J,
	\)
	where \(I\) and \(J\) are Hom-ideals of \(\mathfrak{g}\), and
	\(\alpha\) is bijective. Here, \(\alpha\) also denotes its
	restrictions to \(I\) and \(J\). Then, the following statements hold:
	\begin{enumerate}[label=\textup{\arabic*)},ref=\textup{\arabic*}]
		\item\label{item1-thm-multHomLiedirdecompideals}
		\(
		C(\mathfrak{g})=C(I)\oplus C(J).
		\)
		\item\label{item2-thm-multHomLiedirdecompideals}
		If \(C(\mathfrak{g})=\{0\}\), then, for every \(k\geq0\),
		\begin{align*}
		 \operatorname{Inn}_{\alpha^k}(\mathfrak{g})
		&=
		\operatorname{Inn}_{\alpha^k}(I)
		\oplus
		\operatorname{Inn}_{\alpha^k}(J),
		\\
		\operatorname{Der}_{\alpha^{k+1}}(\mathfrak{g})
		&=
		\operatorname{Der}_{\alpha^{k+1}}(I)
		\oplus
		\operatorname{Der}_{\alpha^{k+1}}(J).
		\end{align*}
		\item\label{item3-thm-multHomLiedirdecompideals}
		The \(n\)-Hom-Lie superalgebra \(\mathfrak{g}\) is complete if and
		only if both \(I\) and \(J\) are complete.
	\end{enumerate}
\end{theorem}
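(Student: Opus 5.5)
The basic tool is that brackets mixing $I$ and $J$ vanish. If some argument lies in $I$ and another lies in $J$, the ideal property in both positions puts the bracket in $I\cap J=\{0\}$. Hence, by multilinearity,
\[
[x_1+y_1,\ldots,x_n+y_n]=[x_1,\ldots,x_n]+[y_1,\ldots,y_n]
\]
for $x_j\in I$ and $y_j\in J$. Since $I,J$ are $\alpha$-stable and $\alpha$ is bijective on $\mathfrak g$, the restrictions $\alpha|_I$ and $\alpha|_J$ are injective. A dimension or direct-sum argument (each restriction maps its summand into itself and their sum is surjective) shows they are bijective. So $I$ and $J$ are multiplicative $n$-Hom-Lie superalgebras with bijective twisting. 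By total super-skew-symmetry only one positional center matters.

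For (1), write $z=a+b$ with $a\in I$, $b\in J$. Then
\[
[z,x_2+y_2,\ldots,x_n+y_n]=[a,x_2,\ldots,x_n]+[b,y_2,\ldots,y_n],
\]
and the two terms lie in $I$ and $J$ respectively. So $z$ is central if and only if $a\in C(I)$ and $b\in C(J)$.

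For (2), inner derivations decompose directly. If $X=(x_j+y_j)_j$ with $\alpha(X)=X$, then $\alpha(x_j)=x_j$ and $\alpha(y_j)=y_j$, and $\operatorname{ad}^k_X=\operatorname{ad}^k_{(x_j)}\oplus\operatorname{ad}^k_{(y_j)}$. Each summand is an inner derivation of $I$ (resp.\ $J$), extended by zero on the other summand. Conversely, inner derivations of $I$ or $J$ extend by zero to inner derivations of $\mathfrak g$.

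For general derivations, the key step is that every $D\in\operatorname{Der}_{\alpha^{k+1}}(\mathfrak g)$ satisfies $D(I)\subseteq I$ and $D(J)\subseteq J$. Given this, $D|_I$ and $D|_J$ are derivations commuting with $\alpha$, and the decomposition follows. To prove the key step, take $a\in I$ and write $D(a)=a'+b'$. For $y_2,\ldots,y_n\in J$ the mixed bracket $[a,y_2,\ldots,y_n]$ is $0$. Applying $D$ gives
\[
0=[D(a),\alpha^{k+1}(y_2),\ldots]+\sum_{i\ge2}\pm[\alpha^{k+1}(a),\ldots,D(y_i),\ldots].
\]
The first term equals $[b',\alpha^{k+1}(y_2),\ldots,\alpha^{k+1}(y_n)]\in J$, since its $I$-part $a'$ meets $J$-arguments and that bracket vanishes. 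Each later term has $\alpha^{k+1}(a)\in I$ in the first slot, so only the $I$-component of $D(y_i)$ survives, together with arguments $\alpha^{k+1}(y_j)\in J$ for $j\ne i$. When $n\ge3$ such a term is mixed and vanishes. When $n=2$ it lies in $I$.

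Taking the $J$-component in either case gives
\[
[b',\alpha^{k+1}(y_2),\ldots,\alpha^{k+1}(y_n)]=0 \quad\text{for all } y_j\in J .
\]
Since $\alpha$ is surjective on $J$, this says $b'\in C(J)$. We also have $[b',I,\ldots]=0$ because such brackets are mixed. Hence $b'\in C(\mathfrak g)=0$, and $D(I)\subseteq I$; the same argument with roles exchanged gives $D(J)\subseteq J$. This is the main obstacle. It needs bookkeeping of Koszul signs and the case $n=2$, but the signs are irrelevant once we project onto $J$. The hypothesis $C(\mathfrak g)=0$ enters exactly here.

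For (3), combine (1) and (2). $C(\mathfrak g)=0$ if and only if $C(I)=C(J)=0$. When the centers vanish, the derivation and inner-derivation spaces split compatibly, so $\operatorname{Der}_{\alpha^{k+1}}(\mathfrak g)=\operatorname{Inn}_{\alpha^k}(\mathfrak g)$ if and only if the same equality holds for $I$ and for $J$. If $\mathfrak g$ is complete, (2) applies directly. Conversely, if $I$ and $J$ are complete, then $C(\mathfrak g)=0$ by (1), and (2) applies again.
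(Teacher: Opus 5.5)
Your proposal is correct and follows essentially the paper's route: vanishing of mixed brackets, bijectivity of $\alpha|_I$ and $\alpha|_J$, the splitting of centers and of $\alpha$-fixed fundamental objects, and the key step $D(I)\subseteq I$. That step is obtained by applying $D$ to $[a,y_2,\ldots,y_n]=0$, projecting onto $J$, and using surjectivity of $\alpha^{k+1}|_J$ together with the trivial center.

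The differences are cosmetic. The paper simply notes that every term containing $\alpha^{k+1}(a)$ lies in the ideal $I$, instead of your $n\geq 3$ versus $n=2$ case split. The paper also explicitly checks the reverse inclusion, that $D_I\oplus D_J$ is an $\alpha^{k+1}$-derivation; you leave this implicit, but it is immediate because both sides of the derivation identity vanish on mixed brackets.
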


\begin{proof}
Because \(I\) and \(J\) are Hom-ideals of \(\mathfrak{g}\), every bracket containing at least one element of \(I\) and at least one element of \(J\) belongs to both \(I\) and \(J\). Hence,
	\(
	[I,J,\mathfrak{g},\ldots,\mathfrak{g}]
	\subseteq I\cap J=\{0\}.
	\)
Thus, every mixed bracket vanishes.

	We first show that the restrictions of \(\alpha\) to \(I\) and
	\(J\) are bijective. Their injectivity follows from the injectivity
	of \(\alpha\). Let \(y\in I\). Since \(\alpha\) is surjective,
	there exists \(x\in\mathfrak{g}\) such that
	\(
	\alpha(x)=y.
	\)
	Write
	\[
	x=x_I+x_J,
	\qquad x_I\in I,\quad x_J\in J.
	\]
	Since \(I\) and \(J\) are \(\alpha\)-stable,
	\(
	\alpha(x_I)\in I,
	\alpha(x_J)\in J.
	\)
	Moreover,
	\(
	y=\alpha(x_I)+\alpha(x_J)\in I.
	\)
	Because \(\mathfrak{g}=I\oplus J\), we obtain
	\(
	\alpha(x_J)=0.
	\)
	The injectivity of \(\alpha\) implies \(x_J=0\), and hence
	\(
	y=\alpha(x_I).
	\)
	Therefore, \(\left.\alpha\right|_I\) is surjective. The same
	argument proves that \(\left.\alpha\right|_J\) is surjective.

	\noindent\ref{item1-thm-multHomLiedirdecompideals}.
	Let
	\(
	x=x_I+x_J\in C(\mathfrak{g}),\
	x_I\in I, x_J\in J.
	\)
	For arbitrary \(y_1,\ldots,y_{n-1}\in I\), we have
	\(
	0=[x,y_1,\ldots,y_{n-1}].
	\)
	By multilinearity,
	\[
	0
	=
	[x_I,y_1,\ldots,y_{n-1}]
	+
	[x_J,y_1,\ldots,y_{n-1}].
	\]
	The second bracket is mixed and therefore vanishes. Thus,
	\[
	[x_I,y_1,\ldots,y_{n-1}]=0.
	\]
	Since \(I\) is an \(n\)-Hom-Lie superalgebra, total
	super-skew-symmetry implies that
	\(
	x_I\in C(I).
	\)
	Similarly,
	\(
	x_J\in C(J).
	\)
	Therefore,
	\(
	C(\mathfrak{g})
	\subseteq
	C(I)\oplus C(J).
	\)

	Conversely, let
	\(
	x_I\in C(I),
	x_J\in C(J).
	\)
	Consider a bracket containing \(x_I+x_J\) and arbitrary elements
	of
	\(
	\mathfrak{g}=I\oplus J.
	\)
	By multilinearity, it is the sum of pure and mixed brackets.
    Every pure \(I\)-bracket containing \(x_I\) vanishes because \(x_I\in C(I)\), and every pure \(J\)-bracket containing \(x_J\) vanishes because \(x_J\in C(J)\). All mixed brackets vanish by the first observation. Hence,
	\[
	x_I+x_J\in C(\mathfrak{g}).
	\]
	Consequently,
	\(
	C(\mathfrak{g})=C(I)\oplus C(J).
	\)

	\noindent\ref{item2-thm-multHomLiedirdecompideals}.
	Assume that
	\(
	C(\mathfrak{g})=\{0\}.
	\)
	By part~\ref{item1-thm-multHomLiedirdecompideals},
	\(
	C(I)=C(J)=\{0\}.
	\)
Let \(k\geq0\), and let
	\(
	D\in \operatorname{Der}_{\alpha^{k+1}}(\mathfrak{g})
	\)
	be homogeneous. Let
	\(
	\pi_I:\mathfrak{g}\longrightarrow I,
	\pi_J:\mathfrak{g}\longrightarrow J
	\)
	be the canonical projections.
Let \(x\in I\) and \(y_2,\ldots,y_n\in J\) be homogeneous. Since
	the bracket is mixed,
	\(
	[x,y_2,\ldots,y_n]=0.
	\)
	Applying \(D\) and using the
	\(\alpha^{k+1}\)-derivation identity, we obtain
	\[
	\begin{aligned}
	0
	={}&
	[D(x),\alpha^{k+1}(y_2),\ldots,\alpha^{k+1}(y_n)]
	\\
	&+
	\sum_{r=2}^{n}
	(-1)^{|D|(|x|+|y_2|+\cdots+|y_{r-1}|)}
	\\
	&\qquad\cdot
	[\alpha^{k+1}(x),
	 \alpha^{k+1}(y_2),\ldots,
	 D(y_r),\ldots,
	 \alpha^{k+1}(y_n)].
	\end{aligned}
	\]
	Every term in the sum belongs to \(I\), because it contains
	\(\alpha^{k+1}(x)\in I\) and \(I\) is a Hom-ideal. Therefore,
	its projection onto \(J\) is zero.

Write
	\(
	D(x)=\pi_ID(x)+\pi_JD(x).
	\)
	Since
\[\pi_ID(x)\in I,\quad \alpha^{k+1}(y_2),\ldots,\alpha^{k+1}(y_n)\in J,\]
we have
\(
[\pi_ID(x),\alpha^{k+1}(y_2),\ldots,\alpha^{k+1}(y_n)]
\in I\cap J=\{0\},
\)
because both \(I\) and \(J\) are Hom-ideals. Projecting the preceding
	identity onto \(J\), we obtain
	\[
	[\pi_JD(x),
	 \alpha^{k+1}(y_2),\ldots,\alpha^{k+1}(y_n)]
	=0.
	\]
	Since \(\left.\alpha\right|_J\) is surjective,
	\(\left.\alpha^{k+1}\right|_J\) is also surjective. Hence,
	\[
	[\pi_JD(x),z_2,\ldots,z_n]=0
	\]
	for all \(z_2,\ldots,z_n\in J\). By total
	super-skew-symmetry,
	\[
	\pi_JD(x)\in C(J)=\{0\}.
	\]
	Thus,
	\(
	D(I)\subseteq I.
	\)
	Similarly,
	\(
	D(J)\subseteq J.
	\)
It follows that
	\(
	D_I=\left.D\right|_I
	\in \operatorname{Der}_{\alpha^{k+1}}(I)
	\)
	and
	\(
	D_J=\left.D\right|_J
	\in \operatorname{Der}_{\alpha^{k+1}}(J).
	\)
	Therefore,
	\[
	\operatorname{Der}_{\alpha^{k+1}}(\mathfrak{g})
	\subseteq
	\operatorname{Der}_{\alpha^{k+1}}(I)
	\oplus
	\operatorname{Der}_{\alpha^{k+1}}(J).
	\]

	For the converse inclusion, it is sufficient to work parity-wise. Let
	\[
	D_I\in \operatorname{Der}_{\alpha^{k+1}}(I),
	\qquad
	D_J\in \operatorname{Der}_{\alpha^{k+1}}(J)
	\]
	be homogeneous derivations of the same parity. Define
	\(
	D_I\oplus D_J:\mathfrak{g}\longrightarrow\mathfrak{g}
	\)
	by
	\[
	(D_I\oplus D_J)(i+j)
	=
	D_I(i)+D_J(j),
	\qquad i\in I,\quad j\in J.
	\]
	This map commutes with \(\alpha\). Its derivation identity holds
	on pure \(I\)-brackets and pure \(J\)-brackets because \(D_I\) and \(D_J\) are derivations. On a mixed bracket, both sides of the derivation identity vanish because every term remains mixed.
	Therefore,
	\(
	D_I\oplus D_J
	\in \operatorname{Der}_{\alpha^{k+1}}(\mathfrak{g}).
	\)
	Consequently,
	\[
	\operatorname{Der}_{\alpha^{k+1}}(\mathfrak{g})
	=
	\operatorname{Der}_{\alpha^{k+1}}(I)
	\oplus
	\operatorname{Der}_{\alpha^{k+1}}(J).
	\]

	We now consider the inner derivations. Let
	\[
	X=(x_1,\ldots,x_{n-1})
	\in H(\mathfrak{g})^{n-1}
	\]
	satisfy
	\(
	\alpha(X)=X.
	\)
	For every \(1\leq r\leq n-1\), write
	\[
	x_r=i_r+j_r,
	\qquad
	i_r\in I,\quad j_r\in J.
	\]
	Since \(I\) and \(J\) are graded subspaces, each nonzero \(i_r\) and \(j_r\) has the same parity as \(x_r\). Moreover,
	\(
	\alpha(x_r)=x_r
	\)
	and the uniqueness of the decomposition
	\(\mathfrak{g}=I\oplus J\) imply
	\(
	\alpha(i_r)=i_r,\
	\alpha(j_r)=j_r.
	\)

	By multilinearity, for every \(y\in\mathfrak{g}\),
	\[
	\operatorname{ad}_X^k(y)
	=
	[x_1,\ldots,x_{n-1},\alpha^k(y)]
	\]
	is the sum of brackets obtained by choosing either \(i_r\) or \(j_r\) in every position. Every term involving elements from both \(I\) and \(J\) is mixed and therefore vanishes. Hence,
	\[
	\operatorname{ad}_X^k
	=
	\operatorname{ad}_{(i_1,\ldots,i_{n-1})}^k
	+
	\operatorname{ad}_{(j_1,\ldots,j_{n-1})}^k.
	\]
	The first summand is the zero extension of an inner derivation of
	\(I\), and the second is the zero extension of an inner derivation
	of \(J\). Thus,
	\[
	\operatorname{Inn}_{\alpha^k}(\mathfrak{g})
	\subseteq
	\operatorname{Inn}_{\alpha^k}(I)
	\oplus
	\operatorname{Inn}_{\alpha^k}(J).
	\]

	Conversely, an inner derivation defined by an \(\alpha\)-fixed
	fundamental object in \(I\) is also an inner derivation of
	\(\mathfrak{g}\) and vanishes on \(J\). The same assertion holds with \(I\) and \(J\) interchanged. Therefore,
	\[
	\operatorname{Inn}_{\alpha^k}(\mathfrak{g})
	=
	\operatorname{Inn}_{\alpha^k}(I)
	\oplus
	\operatorname{Inn}_{\alpha^k}(J).
	\]

	\noindent\ref{item3-thm-multHomLiedirdecompideals}.
	First, suppose \(I\) and \(J\) are complete. Then
	\(
	C(I)=C(J)=\{0\}.
	\)
	By part~\ref{item1-thm-multHomLiedirdecompideals},
	\(
	C(\mathfrak{g})=\{0\}.
	\)
	Furthermore, for every \(k\geq0\),
	\(
	\operatorname{Der}_{\alpha^{k+1}}(I)
	=
	\operatorname{Inn}_{\alpha^k}(I)
	\)
	and
	\(
	\operatorname{Der}_{\alpha^{k+1}}(J)
	=
	\operatorname{Inn}_{\alpha^k}(J).
	\)
	Using part~\ref{item2-thm-multHomLiedirdecompideals}, we obtain
	\(
	\operatorname{Der}_{\alpha^{k+1}}(\mathfrak{g})
	=
	\operatorname{Inn}_{\alpha^k}(\mathfrak{g}).
	\)
	Therefore, \(\mathfrak{g}\) is complete.

	Conversely, suppose that \(\mathfrak{g}\) is complete. Then
	\(
	C(\mathfrak{g})=\{0\}.
	\)
	By part~\ref{item1-thm-multHomLiedirdecompideals},
	\(
	C(I)=C(J)=\{0\}.
	\)
	Moreover, for every \(k\geq0\),
	\[
	\begin{aligned}
	\operatorname{Der}_{\alpha^{k+1}}(I)
	\oplus
	\operatorname{Der}_{\alpha^{k+1}}(J)
	&=
	\operatorname{Der}_{\alpha^{k+1}}(\mathfrak{g})
	=
	\operatorname{Inn}_{\alpha^k}(\mathfrak{g})
	\\
	&=
	\operatorname{Inn}_{\alpha^k}(I)
	\oplus
	\operatorname{Inn}_{\alpha^k}(J).
	\end{aligned}
	\]
	Because these are direct sums under the canonical zero extensions, the \(I\)- and \(J\)-components are equal. Hence,
	\(
	\operatorname{Der}_{\alpha^{k+1}}(I)
	=
	\operatorname{Inn}_{\alpha^k}(I)
	\)
	and
	\(
	\operatorname{Der}_{\alpha^{k+1}}(J)
	=
	\operatorname{Inn}_{\alpha^k}(J).
	\)
	Therefore, both \(I\) and \(J\) are complete.
\end{proof}

The above theorem shows that completeness is preserved under direct sums of Hom-ideals with a bijective twisting map. Conversely, the
completeness of such a direct sum implies the completeness of each
summand.



We now consider the recursive construction of \(n\)-ary
Hom-Nambu superalgebras from multiplicative Hom-Lie
superalgebras. The resulting bracket is generally not totally
super-skew-symmetric and therefore does not, in general, define an
\(n\)-Hom-Lie superalgebra.

\begin{theorem}[\cite{MabroukNcibSilvAACA2021-GenDerRotaBaxterOpsnaryHomNambuSuperal}]
Let \( (\mathfrak g,[\cdot,\cdot],\alpha) \) be a multiplicative Hom-Lie superalgebra. For \(n\geq2\), define
\(
[x_1,x_2]_2=[x_1,x_2]
\)
and
\[
[x_1,\ldots,x_n]_n
=
\bigl[
[x_1,\ldots,x_{n-1}]_{n-1},
\alpha^{n-2}(x_n)
\bigr].
\]
Then
\(
\mathfrak g_n
=
\bigl(
\mathfrak g,
[\cdot,\ldots,\cdot]_n,
\beta_n
\bigr),
\
\beta_n=\alpha^{n-1},
\)
is a multiplicative \(n\)-ary Hom-Nambu superalgebra.
\end{theorem}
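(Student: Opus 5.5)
The plan is to verify three things directly from the recursive definition: evenness of \([\cdot,\ldots,\cdot]_n\), multiplicativity with respect to \(\beta_n=\alpha^{n-1}\), and the super-Hom-Nambu identity \eqref{shni} with \(\alpha_1=\cdots=\alpha_{n-1}=\beta_n\). The first two are routine inductions on \(n\). Evenness holds because the binary bracket and \(\alpha\) are even, so \(|[x_1,\ldots,x_n]_n|=|x_1|+\cdots+|x_n|\). For multiplicativity, the binary relation \(\alpha([a,b])=[\alpha(a),\alpha(b)]\) gives, by induction on \(n\),
\[
\alpha([x_1,\ldots,x_n]_n)=[\alpha(x_1),\ldots,\alpha(x_n)]_n,
\]
and hence the same identity for every power of \(\alpha\), in particular for \(\beta_n\).

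For the Nambu identity, I would first rewrite it in operator form. For a fixed homogeneous \(X=(x_1,\ldots,x_{n-1})\), set \(u=[x_1,\ldots,x_{n-1}]_{n-1}\) (with \(u=x_1\) when \(n=2\)). Then
\[
[x_1,\ldots,x_{n-1},y]_n=[u,\alpha^{n-2}(y)]=:L_u(y).
\]
By multiplicativity, \([\beta_n(x_1),\ldots,\beta_n(x_{n-1})]_{n-1}=\alpha^{n-1}(u)\), so the left-hand side of \eqref{shni} equals \([\alpha^{n-1}(u),\alpha^{n-2}([y_1,\ldots,y_n]_n)]\). The identity to prove therefore reads
\[
\bigl[\alpha^{n-1}(u),\alpha^{n-2}[y_1,\ldots,y_n]_n\bigr]=\sum_{i=1}^n(-1)^{|u||Y|_{i-1}}\bigl[\alpha^{n-1}(y_1),\ldots,L_u(y_i),\ldots,\alpha^{n-1}(y_n)\bigr]_n ,
\]
where \(|u|=|X|\). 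I would isolate this as a lemma valid for an arbitrary homogeneous \(u\in\mathfrak g\), not only for \(u\) of the special form above. The base case \(n=2\) is exactly the super Hom-Jacobi identity \eqref{superHomJacobiid}, rewritten via super skew-symmetry \eqref{superskewsymid} as in the \(\operatorname{ad}\)-form remark: \([\alpha(u),[y_1,y_2]]=[[u,y_1],\alpha(y_2)]+(-1)^{|u||y_1|}[\alpha(y_1),[u,y_2]]\).

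To pass from \(n-1\) to \(n\), I would write \([y_1,\ldots,y_n]_n=[w,\alpha^{n-2}(y_n)]\) with \(w=[y_1,\ldots,y_{n-1}]_{n-1}\). Moving \(\alpha^{n-2}\) inside by multiplicativity, the left-hand side becomes a binary bracket of the form \([\alpha(a),[b,c]]\) with \(a=\alpha^{n-2}(u)\), \(b=\alpha^{n-2}(w)\) and \(c=\alpha^{2n-4}(y_n)\). Applying the binary Hom-Jacobi identity splits it into two terms. The term \([[a,b],\alpha(c)]\) involves \([\alpha^{n-2}(u),\alpha^{n-2}(w)]\); I would rewrite this as \(\alpha^{n-2}\) of an \((n-1)\)-ary instance and apply the induction hypothesis with the same \(u\), which produces the summands \(i\le n-1\). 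The other term, \([\alpha(b),[a,c]]\), contains \([\alpha^{n-2}(u),\alpha^{2n-4}(y_n)]=\alpha^{n-2}(L_u(y_n))\), and it reassembles into the \(i=n\) summand with the correct sign \((-1)^{|u||Y|_{n-1}}\). Strengthening the lemma to arbitrary \(u\) is what makes this induction close.

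The main obstacle is the bookkeeping of the powers of \(\alpha\). The inductive step compares \(\alpha^{n-2}\), \(\alpha^{n-1}\) and \(\alpha^{n-3}\) inside the \((n-1)\)-ary bracket with \(\alpha^{n-2}\), \(\alpha^{n-1}\) inside the \(n\)-ary bracket. A naive induction hypothesis stated only with the exponent pattern of \(\mathfrak g_{n-1}\) may not match. If it does not, I would first try to prove a more flexible statement: for every \(p\geq0\), the map \(y\mapsto[\alpha^{p}(u),\alpha^{q}(y)]\) satisfies a derivation rule for \([\cdot,\ldots,\cdot]_m\) with all other arguments twisted by a fixed power determined by \(p,q,m\). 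I would derive that statement from the binary identity plus multiplicativity, then specialize it to \(m=n\). The Koszul signs should be tracked by moving \(u\) past \(y_1,\ldots,y_{i-1}\) only, which yields exactly \((-1)^{|X||Y|_{i-1}}\).
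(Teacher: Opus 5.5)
The paper does not prove this theorem; it quotes it from Mabrouk--Ncib--Silvestrov, so there is no in-paper argument to compare with. Your argument is correct as planned:

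\begin{itemize}
\item the strengthening to an arbitrary homogeneous $u$ is exactly the right induction hypothesis;
\item the base case is the $\operatorname{ad}$-form of the super Hom-Jacobi identity;
\item the term $[\alpha(b),[a,c]]$ does reassemble into the $i=n$ summand with sign $(-1)^{|u||Y|_{n-1}}$.
\end{itemize}

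The step you flag as the main obstacle closes without any more flexible lemma. It does not close by pulling $\alpha^{n-2}$ outside, however. That gives $[\alpha^{n-2}(u),\alpha^{n-2}(w)]=\alpha^{n-2}([u,w])$, and $[u,w]$ is not of the form of the induction hypothesis unless $\alpha$ is surjective.

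Instead, push one power of $\alpha$ into the $y_j$: apply the $(n-1)$-case, with the same $u$, to $\alpha(y_1),\ldots,\alpha(y_{n-1})$. Since $\alpha^{n-3}([\alpha(y_1),\ldots,\alpha(y_{n-1})]_{n-1})=\alpha^{n-2}(w)$, this gives
\[
[\alpha^{n-2}(u),\alpha^{n-2}(w)]=\sum_{i=1}^{n-1}(-1)^{|u||Y|_{i-1}}\bigl[\alpha^{n-1}(y_1),\ldots,[u,\alpha^{n-2}(y_i)],\ldots,\alpha^{n-1}(y_{n-1})\bigr]_{n-1}.
\]
Now bracket on the right with $\alpha(c)=\alpha^{2n-3}(y_n)=\alpha^{n-2}\bigl(\beta_n(y_n)\bigr)$. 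By the recursive definition, each summand becomes the $i$-th summand $(-1)^{|u||Y|_{i-1}}[\beta_n(y_1),\ldots,L_u(y_i),\ldots,\beta_n(y_n)]_n$ of the $n$-case. So the exponent pattern of $\mathfrak g_{n-1}$ matches exactly after this shift, and the induction is complete.
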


This theorem shows that every multiplicative Hom-Lie superalgebra
naturally generates a sequence of multiplicative \(n\)-ary Hom-Nambu
superalgebras, whose twisting maps are
\(
\beta_n=\alpha^{n-1}.
\)
\begin{lemma}\label{lem:center-induced}
	Let
	\(
	(\mathfrak{g},[\cdot,\cdot],\alpha)
	\)
	be a multiplicative Hom-Lie superalgebra with a surjective
	twisting map \(\alpha\). For every \(n\geq2\), let
	\[
	\mathfrak{g}_n
	=
	\bigl(
	\mathfrak{g},
	[\cdot,\ldots,\cdot]_n,
	\beta_n
	\bigr),
	\qquad
	\beta_n=\alpha^{n-1},
	\]
	be the recursively induced multiplicative \(n\)-ary
	Hom-Nambu superalgebra with
	\begin{align*}
	& [x_1,x_2]_2=[x_1,x_2],
	\\
	& [x_1,\ldots,x_n]_n
	=
	\bigl[
	[x_1,\ldots,x_{n-1}]_{n-1},
	\alpha^{n-2}(x_n)
	\bigr] \ \text{for}\ n\geq 3.
	\end{align*}

	For every \(1\leq i\leq n\), define the \(i\)-th positional
	center of \(\mathfrak{g}_n\) by
	\[
	C_i(\mathfrak{g}_n)
	=
	\left\{
	x\in\mathfrak{g}
	\ \middle|\
	[y_1,\ldots,y_{i-1},x,
	y_{i+1},\ldots,y_n]_n=0
	\text{ for all }y_j\in\mathfrak{g}
	\right\},
	\]
	and set
	\(
	C(\mathfrak{g}_n)
	=
	\bigcap\limits_{i=1}^{n}C_i(\mathfrak{g}_n).
	\)
	Then
	\(
	C(\mathfrak{g})=\{0\}
	\Leftrightarrow
	C(\mathfrak{g}_n)=\{0\}.
	\)
\end{lemma}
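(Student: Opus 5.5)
The plan is to prove the two implications separately. For ``$\Rightarrow$'' I use only the first positional center $C_1(\mathfrak{g}_n)$, together with surjectivity of $\alpha$. For ``$\Leftarrow$'' I show that every central element of $\mathfrak{g}$ is central in $\mathfrak{g}_n$ in every position.

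\emph{Step 1: $C(\mathfrak{g})=\{0\}$ implies $C(\mathfrak{g}_n)=\{0\}$.} Let $x\in C(\mathfrak{g}_n)\subseteq C_1(\mathfrak{g}_n)$. Unwinding the recursion, for all $y_2,\ldots,y_n\in\mathfrak{g}$,
\[
\bigl[\cdots\bigl[[x,y_2],\alpha(y_3)\bigr],\ldots,\alpha^{n-2}(y_n)\bigr]=0 .
\]
Every power $\alpha^{j}$ is surjective. Hence $\alpha^{j-2}(y_j)$ ranges over all of $\mathfrak{g}$ as $y_j$ does, and we may write
\[
\bigl[\cdots[[x,w_2],w_3]\cdots,w_n\bigr]=0
\qquad\text{for all } w_2,\ldots,w_n\in\mathfrak{g}.
\]
Define $v_2=[x,w_2]$ and $v_j=[v_{j-1},w_j]$. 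Fix $w_2,\ldots,w_{n-1}$ and let $w_n$ vary. Then $[v_{n-1},w_n]=0$ for every $w_n$. Since the bracket is super-skew-symmetric, this gives $v_{n-1}\in C(\mathfrak{g})=\{0\}$.

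Now $w_2,\ldots,w_{n-1}$ were arbitrary, so $[v_{n-2},w_{n-1}]=0$ for all $w_{n-1}$, and hence $v_{n-2}=0$. Descending this way yields $[x,w_2]=0$ for all $w_2$. So $x\in C(\mathfrak{g})=\{0\}$. (For $n=2$ there is nothing to do.)

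\emph{Step 2: $C(\mathfrak{g}_n)=\{0\}$ implies $C(\mathfrak{g})=\{0\}$.} I argue by contraposition and show $C(\mathfrak{g})\subseteq C(\mathfrak{g}_n)$. First, $C(\mathfrak{g})$ is $\alpha$-stable. This is the case $n=2$ of the earlier proposition that the center of a multiplicative Hom-Nambu superalgebra with surjective twisting map is a Hom-ideal. Thus $\alpha^{m}(z)\in C(\mathfrak{g})$ for all $m\ge 0$ whenever $z\in C(\mathfrak{g})$.

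Let $z\in C(\mathfrak{g})$ and place it in position $i$.
\begin{itemize}
\item If $i=1$, the innermost bracket $[z,y_2]$ vanishes.
\item If $i=2$, the innermost bracket $[y_1,z]$ vanishes, by super-skew-symmetry.
\item If $i\ge 3$, the bracket $[\,[y_1,\ldots,y_{i-1}]_{i-1},\alpha^{i-2}(z)]$ vanishes, because $\alpha^{i-2}(z)$ is central.
\end{itemize}
In each case every further bracket in the recursion contains a zero argument, so by bilinearity it is $0$. Thus $z\in C_i(\mathfrak{g}_n)$ for all $i$, so $z\in C(\mathfrak{g}_n)$. Consequently a nonzero element of $C(\mathfrak{g})$ would be a nonzero element of $C(\mathfrak{g}_n)$.

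The main point needing care is Step 1. The inner iterated bracket need not exhaust $\mathfrak{g}$, which is why I use the first position and peel off one variable at a time rather than trying to use $C_n(\mathfrak{g}_n)$ directly. Surjectivity of $\alpha$ is needed there to remove the twists $\alpha^{j-2}$ on the outer arguments. It is also used in Step 2, through the $\alpha$-stability of $C(\mathfrak{g})$.
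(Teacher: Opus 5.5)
Your proof is correct and follows the paper's argument. For ($\Rightarrow$), your peeling of the outer arguments of the first-position bracket, using surjectivity of $\alpha$ and $C(\mathfrak{g})=\{0\}$, is the paper's induction on the arity showing $C_1(\mathfrak{g}_m)=\{0\}$, written out explicitly. For ($\Leftarrow$), you show $C(\mathfrak{g})\subseteq C_i(\mathfrak{g}_n)$ for every $i$ via $\alpha$-stability of $C(\mathfrak{g})$ exactly as the paper does: you split directly into cases by where the first vanishing bracket occurs, while the paper inducts on $m$.
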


\begin{proof}
	Assume first that
	\(
	C(\mathfrak{g})=\{0\}.
	\)
	We prove by induction on \(m\geq2\) that
	\(
	C_1(\mathfrak{g}_m)=\{0\}.
	\)

	For \(m=2\), we have
	\( \mathfrak{g}_2=\mathfrak{g}, \)
	and therefore
	\(C_1(\mathfrak{g}_2)=C(\mathfrak{g})=\{0\}.\)

	Suppose that
	\(
	C_1(\mathfrak{g}_m)=\{0\}
	\)
	for some \(m\geq2\), and let
	\(
	x\in C_1(\mathfrak{g}_{m+1}).
	\)
	For arbitrary elements
	\(
	y_1,\ldots,y_{m-1},z\in\mathfrak{g},
	\)
	we have
	\(
	[x,y_1,\ldots,y_{m-1},z]_{m+1}=0.
	\)
	Using the recursive definition of the induced bracket, we obtain
	\[
	\bigl[
	[x,y_1,\ldots,y_{m-1}]_m,
	\alpha^{m-1}(z)
	\bigr]
	=0.
	\]
	Since \(\alpha\) is surjective, \(\alpha^{m-1}\) is also
	surjective. Hence, for every \(w\in\mathfrak{g}\), there exists
	\(z\in\mathfrak{g}\) such that
	\(
	w=\alpha^{m-1}(z).
	\)
	Consequently,
	\[
	\bigl[
	[x,y_1,\ldots,y_{m-1}]_m,w
	\bigr]
	=0
	\]
	for every \(w\in\mathfrak{g}\). Thus,
	\(
	[x,y_1,\ldots,y_{m-1}]_m
	\in C(\mathfrak{g}).
	\)
	Since \(C(\mathfrak{g})=\{0\}\), it follows that
	\(
	[x,y_1,\ldots,y_{m-1}]_m=0.
	\)
	As \(y_1,\ldots,y_{m-1}\) were arbitrary, we obtain
	\(
	x\in C_1(\mathfrak{g}_m).
	\)
	The induction hypothesis now yields \(x=0\). Thus,
	\(
	C_1(\mathfrak{g}_{m+1})=\{0\}.
	\)
It follows by induction that
	\(
	C_1(\mathfrak{g}_n)=\{0\}.
	\)
	Since
	\[
	C(\mathfrak{g}_n)
	=
	\bigcap_{i=1}^{n}C_i(\mathfrak{g}_n)
	\subseteq C_1(\mathfrak{g}_n),
	\]
	we conclude that
	\(
	C(\mathfrak{g}_n)=\{0\}.
	\)

	Conversely, assume that
	\(
	C(\mathfrak{g}_n)=\{0\}.
	\)
	Let \(x\in C(\mathfrak{g})\). We first observe that
	\(
	\alpha^r(x)\in C(\mathfrak{g})
	\)
	for every integer \(r\geq0\).

	Indeed, let \(y\in\mathfrak{g}\). Since \(\alpha\) is
	surjective, \(\alpha^r\) is surjective, and hence there exists
	\(z\in\mathfrak{g}\) such that
	\(
	y=\alpha^r(z).
	\)
	By multiplicativity,
	\[
	[\alpha^r(x),y]
	=
	[\alpha^r(x),\alpha^r(z)]
	=
	\alpha^r([x,z])
	=0.
	\]
	Therefore,
	\(
	\alpha^r(x)\in C(\mathfrak{g}).
	\)

	We now show that \(x\) belongs to every positional center of
	\(\mathfrak{g}_n\). More precisely, we prove by induction on
	\(m\geq2\) that, for every \(1\leq i\leq m\),
	\[
	[y_1,\ldots,y_{i-1},x,
	y_{i+1},\ldots,y_m]_m=0
	\]
	for all \(y_j\in\mathfrak{g}\).

	For \(m=2\), this follows from
	\(
	x\in C(\mathfrak{g}).
	\)

	Suppose that the assertion holds for \(m\), and consider the
	\((m+1)\)-ary bracket.

	First, let \(1\leq i\leq m\). By the recursive definition,
	\[
	\begin{aligned}
	&[y_1,\ldots,y_{i-1},x,
	y_{i+1},\ldots,y_m,y_{m+1}]_{m+1}
	\\
	&\quad=
	\bigl[
	[y_1,\ldots,y_{i-1},x,
	y_{i+1},\ldots,y_m]_m,
	\alpha^{m-1}(y_{m+1})
	\bigr].
	\end{aligned}
	\]
	The inner \(m\)-ary bracket vanishes by the induction
	hypothesis. Hence,
	\[
	[y_1,\ldots,y_{i-1},x,
	y_{i+1},\ldots,y_m,y_{m+1}]_{m+1}=0.
	\]

	Now let \(i=m+1\). Then
	\[
	\begin{aligned} \relax
	[y_1,\ldots,y_m,x]_{m+1}
	&=
	\bigl[
	[y_1,\ldots,y_m]_m,
	\alpha^{m-1}(x)
	\bigr].
	\end{aligned}
	\]
	As proved above,
	\(
	\alpha^{m-1}(x)\in C(\mathfrak{g}).
	\)
	Therefore,
	\[
	\bigl[
	[y_1,\ldots,y_m]_m,
	\alpha^{m-1}(x)
	\bigr]
	=0.
	\]
	Thus the assertion also holds when \(x\) occupies the last
	position.

	By induction, \(
	x\in C_i(\mathfrak{g}_n).
	\) for every \(1\leq i\leq n\).
	Hence,
	\[
	x\in\bigcap_{i=1}^{n}C_i(\mathfrak{g}_n)
	=C(\mathfrak{g}_n).
	\]
	Since \(C(\mathfrak{g}_n)=\{0\}\), it follows that \(x=0\).
	Therefore,
	\(
	C(\mathfrak{g})=\{0\}.
	\)

	This completes the proof.
\end{proof}

The preceding lemma shows that, when the twisting map is
surjective, the recursive construction preserves and reflects the
triviality of the center. Here, the center of the induced
\(n\)-ary Hom-Nambu superalgebra is understood as the intersection of all its positional centers.
\begin{lemma}\label{lem:induced-hom-ideal}
	Let
	\(
	(\mathfrak{g},[\cdot,\cdot],\alpha)
	\)
	be a multiplicative Hom-Lie superalgebra, and
	\(I\) be a Hom-ideal of \(\mathfrak{g}\). For every \(n\geq2\),
	consider the recursively induced multiplicative \(n\)-ary
	Hom-Nambu superalgebra
	\(
	\mathfrak{g}_n
	=
	\bigl(
	\mathfrak{g},
	[\cdot,\ldots,\cdot]_n,
	\beta_n
	\bigr),
	\
	\beta_n=\alpha^{n-1},
	\)
	where
	\begin{align*}
	 [x_1,x_2]_2 &= [x_1,x_2],
	\\
	 [x_1,\ldots,x_n]_n
    &=
	\bigl[
	[x_1,\ldots,x_{n-1}]_{n-1},
	\alpha^{n-2}(x_n)
	\bigr] \ \text{for}\ n\geq 3.
	\end{align*}
Then, \(I\) is a Hom-ideal of \(\mathfrak{g}_n\). More precisely,
for each \(1\leq i\leq n\),
	\[
	[\mathfrak{g},\ldots,\mathfrak{g},
	\underset{i\text{-th position}}{I},
	\mathfrak{g},\ldots,\mathfrak{g}]_n
	\subseteq I \ \text{and}\ \beta_n(I)\subseteq I.
	\]
\end{lemma}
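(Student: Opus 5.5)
The plan is an induction on \(n\geq 2\), proving simultaneously for every position \(1\leq i\leq n\) that
\[
[\mathfrak g,\ldots,\mathfrak g,\underset{i\text{-th position}}{I},\mathfrak g,\ldots,\mathfrak g]_n\subseteq I .
\]
The \(\beta_n\)-stability and the remaining Hom-subalgebra conditions will be handled separately. The Hom-ideal \(I\) of \(\mathfrak g\) is a graded \(\alpha\)-stable subspace with \([\mathfrak g,I]\subseteq I\) and \([I,\mathfrak g]\subseteq I\). In the super-skew-symmetric binary case these two conditions are equivalent, but I will use both directly.

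\textbf{Stability under \(\beta_n\).} Since \(\alpha(I)\subseteq I\), an immediate induction on \(r\) gives \(\alpha^r(I)\subseteq I\) for all \(r\geq 0\). In particular \(\beta_n(I)=\alpha^{n-1}(I)\subseteq I\). The gradedness of \(I\) is unchanged, since the underlying superspace of \(\mathfrak g_n\) is still \(\mathfrak g\).

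\textbf{Positional ideal property.} For \(n=2\) the claim is exactly the two-sided Hom-ideal property of \(I\) in \(\mathfrak g\). Assume the statement holds for \(n-1\geq 2\), and consider
\[
[y_1,\ldots,y_n]_n=\bigl[[y_1,\ldots,y_{n-1}]_{n-1},\alpha^{n-2}(y_n)\bigr],
\]
where exactly one argument \(y_i\) lies in \(I\) and the others are arbitrary. Two cases arise.
\begin{enumerate}
\item If \(1\leq i\leq n-1\), the induction hypothesis gives \([y_1,\ldots,y_{n-1}]_{n-1}\in I\). The outer bracket with an element of \(I\) in the first slot then lies in \(I\), because \([I,\mathfrak g]\subseteq I\).
\item If \(i=n\), then \(\alpha^{n-2}(y_n)\in I\) by the first step. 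The outer bracket lies in \(I\) because \([\mathfrak g,I]\subseteq I\).
\end{enumerate}
Multilinearity extends the inclusion from such elementwise brackets to the full spans. Taking all positions together, this also gives \([I,\ldots,I]_n\subseteq I\), so \(I\) is a Hom-subalgebra of \(\mathfrak g_n\) and hence a Hom-ideal in the sense of the \(n\)-ary definition.

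\textbf{Main obstacle.} There is essentially none. The only point needing care is the last position \(i=n\), where the element of \(I\) appears twisted by \(\alpha^{n-2}\); this is exactly where the iterated \(\alpha\)-stability of \(I\) is required. No surjectivity of \(\alpha\) is needed.
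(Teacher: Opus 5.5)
Your proposal is correct and follows essentially the same route as the paper: an induction on \(n\) through the recursive bracket, with two cases. When the \(I\)-element sits among the first \(n-1\) arguments, you use the induction hypothesis plus \([I,\mathfrak g]\subseteq I\); when it sits last, you use iterated \(\alpha\)-stability plus \([\mathfrak g,I]\subseteq I\). Stability under \(\beta_n\) comes from \(\alpha^{n-1}(I)\subseteq I\), exactly as in the paper. The only cosmetic difference is that the paper derives \([\mathfrak g,I]\subseteq I\) from \([I,\mathfrak g]\subseteq I\) via super-skew-symmetry, whereas you take both inclusions directly from the two-sided definition of a Hom-ideal.
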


\begin{proof}
	Since \(I\) is a Hom-ideal of the binary Hom-Lie
	superalgebra \(\mathfrak{g}\), it is a graded subspace satisfying
	\(
	\alpha(I)\subseteq I
	\)
	and
	\(
	[I,\mathfrak{g}]\subseteq I.
	\)
	By the super-skew-symmetry of the binary bracket, we also have
	\(
	[\mathfrak{g},I]\subseteq I.
	\)
	Indeed, for homogeneous \(x\in\mathfrak{g}\) and \(y\in I\),
	\[
	[x,y]
	=
	-(-1)^{|x||y|}[y,x]\in I.
	\]
	By linearity, the same inclusion holds for arbitrary
	\(x\in\mathfrak{g}\) and \(y\in I\).

	We first verify the stability under the twisting map of \(\mathfrak{g}_n\). Since
	\(
	\alpha(I)\subseteq I,
	\)
	the repeated application of \(\alpha\) gives
	\(
	\alpha^r(I)\subseteq I
	\)
	for every integer \(r\geq1\). In particular,
	\(
	\beta_n(I)
	=
	\alpha^{n-1}(I)
	\subseteq I.
	\)

	It remains to be proven that \(I\) is stable under the induced
	\(n\)-ary bracket in every argument. We proceed by induction on
	\(n\).

	For \(n=2\), the induced bracket is the original binary bracket
	\[
	[x_1,x_2]_2=[x_1,x_2].
	\]
	The required inclusions
	\(
	[I,\mathfrak{g}]_2\subseteq I\)
	and
	\([\mathfrak{g},I]_2\subseteq I
	\)
	follow from the Hom-ideal property of \(I\) and the super-skew-symmetry of the binary bracket.

	Suppose that, for some \(n\geq2\), the assertion holds for the
	induced \(n\)-ary bracket. Thus, for every \(1\leq i\leq n\),
	\[
	[\mathfrak{g},\ldots,\mathfrak{g},
	\underset{i\text{-th position}}{I},
	\mathfrak{g},\ldots,\mathfrak{g}]_n
	\subseteq I.
	\]
We prove the corresponding assertion for the
	\((n+1)\)-ary bracket.

Let
	\(
	x_1,\ldots,x_{n+1}\in\mathfrak{g}
	\)
	be homogeneous.
First, suppose that
	\(
	x_i\in I
	\)
	for some \(1\leq i\leq n\). By the induction hypothesis,
	\(
	[x_1,\ldots,x_n]_n\in I.
	\)
	Using the recursive definition of the induced bracket, we obtain
	\[
	\begin{aligned} \relax
	[x_1,\ldots,x_n,x_{n+1}]_{n+1}
	&=
	\bigl[
	[x_1,\ldots,x_n]_n,
	\alpha^{n-1}(x_{n+1})
	\bigr].
	\end{aligned}
	\]
	Since
	\(
	[x_1,\ldots,x_n]_n\in I
	\)
	and
	\(
	\alpha^{n-1}(x_{n+1})\in\mathfrak{g},
	\)
	the inclusion
	\(
	[I,\mathfrak{g}]\subseteq I
	\)
	implies that
	\(
	[x_1,\ldots,x_n,x_{n+1}]_{n+1}\in I.
	\)

	Now suppose that the element belonging to \(I\) occurs in the
	last position, that is,
	\(
	x_{n+1}\in I.
	\)
	Since \(I\) is stable under \(\alpha\), we have
	\(
	\alpha^{n-1}(x_{n+1})\in I.
	\)
	Moreover,
	\(
	[x_1,\ldots,x_n]_n\in\mathfrak{g}.
	\)
	Therefore,
	\[
	[x_1,\ldots,x_n,x_{n+1}]_{n+1}
	=
	\bigl[
	[x_1,\ldots,x_n]_n,
	\alpha^{n-1}(x_{n+1})
	\bigr]
	\in [\mathfrak{g},I]
	\subseteq I.
	\]

	Hence, for every \(1\leq i\leq n+1\),
	\[
	[\mathfrak{g},\ldots,\mathfrak{g},
	\underset{i\text{-th position}}{I},
	\mathfrak{g},\ldots,\mathfrak{g}]_{n+1}
	\subseteq I.
	\]
	By induction, this inclusion holds for all \(n\geq2\).

	Finally, taking all arguments in \(I\) gives
	\(
	[I,\ldots,I]_n\subseteq I.
	\)
	Thus, \(I\) is also a Hom-subalgebra of \(\mathfrak{g}_n\).
	Together with
	\(
	\beta_n(I)\subseteq I
	\)
	and stability in every argument, this proves that \(I\) is a
	Hom-ideal of \(\mathfrak{g}_n\).
\end{proof}

Thus, every Hom-ideal of the original binary Hom-Lie
superalgebra remains a Hom-ideal of each recursively induced
\(n\)-ary Hom-Nambu superalgebra. In particular, the recursive
construction preserves the ideal structure of the underlying
Hom-Lie superalgebra.

\begin{proposition}\label{prop:relative-der-induced}
	Let
	\(
	(\mathfrak{g},[\cdot,\cdot],\alpha)
	\)
	be a multiplicative Hom-Lie superalgebra, and let
	\(
	D:\mathfrak{g}\longrightarrow\mathfrak{g}
	\)
	be an \(\alpha^k\)-derivation of \(\mathfrak{g}\), where
	\(k\geq0\). Then \(D\) commutes with \(\beta_n=\alpha^{n-1}\)
	and satisfies
	\[
	\begin{aligned}
	D([x_1,\ldots,x_n]_n)
	={}&
	\sum_{i=1}^{n}
	(-1)^{|D|(|x_1|+\cdots+|x_{i-1}|)}
	\\
	&\quad\cdot
	[\alpha^k(x_1),\ldots,\alpha^k(x_{i-1}),
	D(x_i),
	\\
	&\hspace{35mm}
	\alpha^k(x_{i+1}),\ldots,\alpha^k(x_n)]_n
	\end{aligned}
	\]
	for all homogeneous
	\(x_1,\ldots,x_n\in\mathfrak{g}\).
\end{proposition}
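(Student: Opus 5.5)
Since $D$ is an $\alpha^k$-derivation, it commutes with $\alpha$ by definition. Hence it commutes with every power $\alpha^r$, and in particular with $\beta_n=\alpha^{n-1}$. This disposes of the first claim at once.

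For the derivation identity we argue by induction on $n\geq 2$. The case $n=2$ is exactly the binary $\alpha^k$-derivation identity, because $[\cdot,\cdot]_2=[\cdot,\cdot]$. Assume the identity holds for $[\cdot,\ldots,\cdot]_n$. Take homogeneous $x_1,\ldots,x_{n+1}$ and write $u=[x_1,\ldots,x_n]_n$ and $v=\alpha^{n-1}(x_{n+1})$. Then
\[
[x_1,\ldots,x_{n+1}]_{n+1}=[u,v].
\]
Applying the binary identity gives
\[
D([u,v])=[D(u),\alpha^k(v)]+(-1)^{|D||u|}[\alpha^k(u),D(v)].
\]
Evenness of the $n$-ary bracket gives $|u|=|x_1|+\cdots+|x_n|$. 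The two terms are then treated separately.

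\emph{First term.} Substitute the induction hypothesis for $D(u)$ and expand by bilinearity. Each resulting summand has the form
\[
\bigl[[\alpha^k(x_1),\ldots,D(x_i),\ldots,\alpha^k(x_n)]_n,\ \alpha^k\alpha^{n-1}(x_{n+1})\bigr],
\]
carrying the sign $(-1)^{|D|(|x_1|+\cdots+|x_{i-1}|)}$. Since $\alpha^k\alpha^{n-1}=\alpha^{n-1}\alpha^k$, this is exactly
\[
[\alpha^k(x_1),\ldots,D(x_i),\ldots,\alpha^k(x_n),\alpha^k(x_{n+1})]_{n+1}
\]
by the recursive definition. These give the summands $i=1,\ldots,n$ of the target formula.

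\emph{Second term.} By multiplicativity iterated through the recursion (a short induction showing $\alpha^k([x_1,\ldots,x_n]_n)=[\alpha^k(x_1),\ldots,\alpha^k(x_n)]_n$), we have $\alpha^k(u)=[\alpha^k(x_1),\ldots,\alpha^k(x_n)]_n$. Since $D$ commutes with $\alpha^{n-1}$, we have $D(v)=\alpha^{n-1}(D(x_{n+1}))$. Hence the second term equals
\[
(-1)^{|D|(|x_1|+\cdots+|x_n|)}[\alpha^k(x_1),\ldots,\alpha^k(x_n),D(x_{n+1})]_{n+1},
\]
which is the summand $i=n+1$ with the correct sign. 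Summing the two parts closes the induction.

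The main point needing care is the bookkeeping rather than any deep step. Two facts must be in place: that $\alpha^k$ passes through the recursively defined $n$-ary bracket, which needs its own small induction using multiplicativity of $\alpha$ on the binary bracket, and that the powers $\alpha^k$ and $\alpha^{n-1}$ appearing in the last slot commute. We would establish the compatibility $\alpha([x_1,\ldots,x_n]_n)=[\alpha(x_1),\ldots,\alpha(x_n)]_n$ first as a preliminary step, since it also underlies the multiplicativity asserted in the preceding theorem. The sign check is straightforward because the $n$-ary bracket is even.
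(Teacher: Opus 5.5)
Your proposal is correct and follows the paper's proof essentially step for step. The commutation $D\beta_n=\beta_n D$ comes from $D\alpha=\alpha D$, and the induction on $n$ splits $D([u,\alpha^{n-1}(x_{n+1})])$ via the binary identity. The induction hypothesis handles the first term, while $\alpha^k([x_1,\ldots,x_n]_n)=[\alpha^k(x_1),\ldots,\alpha^k(x_n)]_n$ together with $D\alpha^{n-1}=\alpha^{n-1}D$ produces the $(n+1)$-st summand, which the paper likewise proves by its own short induction.
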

\begin{proof}
Since \(D\) commutes with \(\alpha\), it also commutes with every
power of \(\alpha\). In particular,
\(
D\circ\beta_n
=
D\circ\alpha^{n-1}
=
\alpha^{n-1}\circ D
=
\beta_n\circ D.
\)

We prove the displayed derivation identity using the induction in \(n\).

For \(n=2\), the assertion is precisely the
\(\alpha^k\)-derivation identity for the original Hom-Lie
superalgebra \(\mathfrak g\).

Suppose that the assertion holds for some \(n\geq2\). We first note
that, by multiplicativity of the original Hom-Lie superalgebra,
\[
\alpha^r([x_1,\ldots,x_n]_n)
=
[\alpha^r(x_1),\ldots,\alpha^r(x_n)]_n
\]
for every \(r\geq0\). This follows by induction from the recursive
definition of the brackets.

Let \(x_1,\ldots,x_{n+1}\in H(\mathfrak g)\). By the recursive
definition,
\[
[x_1,\ldots,x_{n+1}]_{n+1}
=
\bigl[
[x_1,\ldots,x_n]_n,
\alpha^{n-1}(x_{n+1})
\bigr].
\]
Since \(D\) is an \(\alpha^k\)-derivation of the binary Hom-Lie
superalgebra, we have
\[
\begin{aligned}
&D([x_1,\ldots,x_{n+1}]_{n+1})\\
&=
\bigl[
D([x_1,\ldots,x_n]_n),
\alpha^{k+n-1}(x_{n+1})
\bigr]
\\
&\quad+
(-1)^{|D|(|x_1|+\cdots+|x_n|)}
\bigl[
\alpha^k([x_1,\ldots,x_n]_n),
D(\alpha^{n-1}(x_{n+1}))
\bigr].
\end{aligned}
\]
Here we used
\(
\bigl|[x_1,\ldots,x_n]_n\bigr|
=
|x_1|+\cdots+|x_n|.
\)

By the induction hypothesis,
\[
\begin{aligned}
D([x_1,\ldots,x_n]_n)
={}&
\sum_{i=1}^{n}
(-1)^{|D|(|x_1|+\cdots+|x_{i-1}|)}
\\
&\quad\cdot
[\alpha^k(x_1),\ldots,\alpha^k(x_{i-1}),
D(x_i),
\alpha^k(x_{i+1}),\ldots,\alpha^k(x_n)]_n .
\end{aligned}
\]
Substituting this expression into the first term and using the
recursive definition of the \((n+1)\)-ary bracket gives
\[
\begin{aligned}
&\bigl[
D([x_1,\ldots,x_n]_n),
\alpha^{k+n-1}(x_{n+1})
\bigr]
\\
&=
\sum_{i=1}^{n}
(-1)^{|D|(|x_1|+\cdots+|x_{i-1}|)}
\\
&\qquad\cdot
[\alpha^k(x_1),\ldots,\alpha^k(x_{i-1}),
D(x_i),
\alpha^k(x_{i+1}),\ldots,
\alpha^k(x_{n+1})]_{n+1}.
\end{aligned}
\]

For the second term, since \(D\) commutes with \(\alpha\), we have
\[
D(\alpha^{n-1}(x_{n+1}))
=
\alpha^{n-1}(D(x_{n+1})),
\]
while
\(
\alpha^k([x_1,\ldots,x_n]_n)
=
[\alpha^k(x_1),\ldots,\alpha^k(x_n)]_n.
\)
Therefore,
\[
\begin{aligned}
&\bigl[
\alpha^k([x_1,\ldots,x_n]_n),
D(\alpha^{n-1}(x_{n+1}))
\bigr]
\\
&=
\bigl[
[\alpha^k(x_1),\ldots,\alpha^k(x_n)]_n,
\alpha^{n-1}(D(x_{n+1}))
\bigr]
\\
&=
[\alpha^k(x_1),\ldots,\alpha^k(x_n),D(x_{n+1})]_{n+1}.
\end{aligned}
\]

Combining the two expressions, we obtain
\begin{multline*}
D([x_1,\ldots,x_{n+1}]_{n+1})
=
\sum_{i=1}^{n+1}
(-1)^{|D|(|x_1|+\cdots+|x_{i-1}|)}
\\
\cdot
[\alpha^k(x_1),\ldots,\alpha^k(x_{i-1}),
D(x_i),
\alpha^k(x_{i+1}),\ldots,
\alpha^k(x_{n+1})]_{n+1}.
\end{multline*}

Thus the assertion holds for \(n+1\), and hence for every
\(n\geq2\).
\end{proof}

\begin{remark}
	The preceding proposition gives only a relative derivation identity.
	A \(\beta_n^r\)-derivation of the \(n\)-ary Hom-Nambu superalgebra
\(\mathfrak{g}_n\), in the sense of the definition of an
\(\alpha^k\)-derivation, satisfies
	the derivation identity with
	\(
	\beta_n^r
	=
	\alpha^{(n-1)r}
	\)
	applied to all unchanged arguments.
Consequently, if \(D\) is an
	\(\alpha^{(n-1)r}\)-derivation of the original binary Hom-Lie
	superalgebra, then Proposition~\ref{prop:relative-der-induced}
	shows that \(D\) is a \(\beta_n^r\)-derivation of
	\(\mathfrak{g}_n\). The converse is not asserted.
In particular, this result does not imply that the completeness of
	\(\mathfrak{g}\) is inherited by \(\mathfrak{g}_n\).
\end{remark}

\section{Complete Low-Dimensional \texorpdfstring{$n$}{n}-Hom-Lie Superalgebras} \label{sec4-ComplLowdimnHomLieSuperalg}
In this section, we determine the complete low-dimensional Hom-Lie
superalgebras by computing their centers, derivations, and inner
derivations. Recall that a multiplicative Hom-Lie superalgebra is
complete if its center is trivial and
\(
\operatorname{Der}_{\alpha^{k+1}}(\mathfrak{g})
=
\operatorname{Inn}_{\alpha^k}(\mathfrak{g})
\)
for every \(k\geq0\).
	
Next, we determine which of the $2$-dimensional Hom-Lie superalgebras classified in Theorem~\ref{2dimen} are complete in the sense of Definition~\ref{def:complete}. Throughout this subsection, all Hom-Lie superalgebras are defined
over the field of complex numbers $\mathbb{C}$.
	
\begin{theorem}\label{complete2dim}
	Among the \(2\)-dimensional multiplicative Hom-Lie superalgebras
	listed in Theorem~\ref{2dimen}, the complete ones are precisely
	\(
	\mathfrak{g}^{3,b}_{1;1},\
	b\in\mathbb{C}^{*}.
	\)
\end{theorem}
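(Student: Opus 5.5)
The plan is to run through the five families of Theorem~\ref{2dimen} one by one. For each, we compute the center, the space $\operatorname{Der}_{\alpha^{k+1}}(\mathfrak g)$ and the space $\operatorname{Inn}_{\alpha^k}(\mathfrak g)$. Throughout, we use that derivations commute with $\alpha$ and that inner derivations $\operatorname{ad}^k_x(y)=[x,\alpha^k(y)]$ require $\alpha(x)=x$. In all cases we write $x=c_0e_0+c_1e_1$ and use $[e_1,e_0]=-[e_0,e_1]$, which holds because $e_0$ is even.

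\emph{Non-complete cases.}
\begin{enumerate}
\item For $\mathfrak g^1_{0;2}$ and $\mathfrak g^2_{1;1}$ the algebra is abelian, so $C(\mathfrak g)=\mathfrak g\neq0$.
\item For $\mathfrak g^{5,b}_{1;1}$ we have $[e_0,e_0]=0$ and $[e_0,e_1]=0$, so $e_0\in C(\mathfrak g)$.
\item For $\mathfrak g^{4,a}_{1;1}$ the center is trivial: $[x,e_0]=-c_1e_1$ and $[x,e_1]=c_0e_1$. However, $\alpha=\operatorname{diag}(a,0)$ with $a\neq 0,1$ has no nonzero fixed vector, so $\operatorname{Inn}_{\alpha^k}=0$. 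On the other hand, $D=\operatorname{diag}(1,0)$ commutes with $\alpha$, and the identity on $[e_0,e_1]=e_1$ reads $0=[e_0,\alpha^{k+1}e_1]+[\alpha^{k+1}e_0,0]=0$, since $\alpha e_1=0$. The remaining brackets are checked the same way. So $D$ is a nonzero non-inner $\alpha^{k+1}$-derivation, and $\mathfrak g^{4,a}_{1;1}$ is not complete.
\end{enumerate}

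\emph{The family $\mathfrak g^{3,b}_{1;1}$.} The same computation as above gives $C(\mathfrak g)=0$. We now split according to $b$.
\begin{enumerate}
\item \emph{Case $b\neq0,1$.} Commuting with $\alpha=\operatorname{diag}(1,b)$ forces an odd derivation to vanish and an even one to be $D=\operatorname{diag}(d_0,d_1)$. Applying $D$ to $[e_0,e_1]=e_1$ gives $d_1=d_0b^{k+1}+d_1$, hence $d_0=0$. The brackets $[e_1,e_1]=0$ and $[e_0,e_0]=0$ impose nothing further. So $\operatorname{Der}_{\alpha^{k+1}}=\mathbb C\operatorname{diag}(0,1)$. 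The fixed vectors of $\alpha$ are $\mathbb Ce_0$, and $\operatorname{ad}^k_{e_0}=\operatorname{diag}(0,b^k)\neq0$. The two spaces therefore coincide, and the algebra is complete.
\item \emph{Case $b=0$, to be excluded.} Here the relation from $[e_0,e_1]$ becomes $0=d_0\cdot 0^{k+1}$, which leaves $d_0$ free. So $\operatorname{diag}(1,0)$ is a derivation. But $\operatorname{Inn}_{\alpha^k}$ is spanned by $\operatorname{ad}^k_{e_0}=\operatorname{diag}(0,0^k)$, which never has a nonzero $e_0$-entry. Hence $\mathfrak g^{3,0}_{1;1}$ is not complete.
\item \emph{Case $b=1$.} Here $\alpha=\mathrm{id}$, and odd maps $D\colon e_0\mapsto ce_1,\ e_1\mapsto de_0$ must also be examined. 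Applying $D$ to $[e_0,e_1]=e_1$ gives $de_0=c[e_1,e_1]+d[e_0,e_0]=0$, so $d=0$. The remaining brackets, checked with the super signs, impose no further condition. So $e_0\mapsto e_1$ spans the odd derivations. Since $e_1$ is now $\alpha$-fixed, $\operatorname{ad}_{e_1}$ sends $e_0\mapsto -e_1$, so this odd derivation is inner. Together with the even part, this gives completeness.
\end{enumerate}

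The main obstacle is the careful sign bookkeeping for odd derivations in the case $b=1$, together with verifying that every bracket, including $[e_1,e_1]$ and $[e_1,e_0]$, is consistent. The rest is routine linear algebra.
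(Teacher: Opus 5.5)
Your proposal is correct and follows essentially the same route as the paper: you discard the abelian families and $\mathfrak{g}^{5,b}_{1;1}$ via the center, exhibit the outer even derivation $\operatorname{diag}(1,0)$ against the lack of suitable $\alpha$-fixed vectors for $\mathfrak{g}^{4,a}_{1;1}$ and $\mathfrak{g}^{3,0}_{1;1}$, and for $b\neq0$ show that the even $\alpha^{k+1}$-derivations are spanned by $\operatorname{ad}^k_{e_0}$ while the odd ones vanish unless $b=1$, where they are multiples of $\operatorname{ad}_{e_1}$. The only difference is cosmetic: you split by $b$ first rather than by parity, and you should say explicitly that your even-derivation computation in the case $b\neq0,1$ uses only $b\neq0$, since that is what justifies ``together with the even part'' when $b=1$.
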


\begin{proof}
	The abelian Hom-Lie superalgebras
	\(
	\mathfrak{g}^{1}_{0;2}\) and \(\mathfrak{g}^{2}_{1;1}
	\)
	are not complete because their centers are nonzero.

	For
	\(
	\mathfrak{g}^{5,b}_{1;1},
	\)
	we have
	\[
	[e_0,e_1]=0
	\qquad\text{and}\qquad
	[e_1,e_1]=e_0.
	\]
	Therefore,
	\(
	e_0\in C(\mathfrak{g}^{5,b}_{1;1}),
	\)
	and hence, \(\mathfrak{g}^{5,b}_{1;1}\) is not complete.

	We now consider
	\(
	\mathfrak{g}
	=
	\mathfrak{g}^{3,b}_{1;1},
	\)
	where
	\[
	[e_0,e_1]=e_1,
	\qquad
	[e_1,e_1]=0,
	\qquad
	\alpha(e_0)=e_0,
	\qquad
	\alpha(e_1)=be_1.
	\]
	First, we show that
	\(
	C(\mathfrak{g})=\{0\}.
	\)
	Let
	\(
	x=ue_0+ve_1\in C(\mathfrak{g}).
	\)
	Then
	\[
	[x,e_0]=-ve_1=0,
	\quad
	[x,e_1]=ue_1=0.
	\]
	Thus, \(u=v=0\), and consequently
	\(
	C(\mathfrak{g})=\{0\}.
	\)

	Assume first that \(b\neq0\). Let \(k\geq0\), and let
	\(
	D\in \operatorname{Der}_{\alpha^{k+1}}(\mathfrak{g})
	\)
	be homogeneous.
Suppose that \(D\) is even. Then
	\(
	D(e_0)=ue_0,
	\
	D(e_1)=ve_1
	\)
	for some \(u,v\in\mathbb{K}\). Applying \(D\) to
	\(
	[e_0,e_1]=e_1,
	\)
	we obtain
	\[
	ve_1
	=
	D([e_0,e_1]) =
	[D(e_0),\alpha^{k+1}(e_1)]
	+
	[\alpha^{k+1}(e_0),D(e_1)]
	=
	ub^{k+1}e_1+ve_1.
	\]
	Since \(b\neq0\), it follows that
	\(
	u=0.
	\)
	Thus,
	\(
	D(e_0)=0,\
	D(e_1)=ve_1.
	\)
	The element
	\(
	x=vb^{-k}e_0
	\)
	satisfies
	\(
	\alpha(x)=x.
	\)
	Moreover,
	\(
	\operatorname{ad}_x^k(e_0)=0
	\)
	and
	\[
	\operatorname{ad}_x^k(e_1)
	=
	[x,\alpha^k(e_1)]
	=
	[vb^{-k}e_0,b^ke_1]
	=
	ve_1.
	\]
	Therefore,
	\(
	D=\operatorname{ad}_x^k
	\in
	\operatorname{Inn}_{\alpha^k}(\mathfrak{g}).
	\)

	Suppose that \(D\) is odd. Then
	\(
	D(e_0)=ue_1,
	\
	D(e_1)=ve_0
	\)
	for some \(u,v\in\mathbb{K}\). The equality
	\(
	D\circ\alpha=\alpha\circ D
	\)
	gives
	\[
	(1-b)u=0
	\qquad\text{and}\qquad
	(b-1)v=0.
	\]
	Applying \(D\) to
	\(
	[e_0,e_1]=e_1
	\)
	gives
	\[
	ve_0
	=
	[D(e_0),\alpha^{k+1}(e_1)]
	+
	[\alpha^{k+1}(e_0),D(e_1)]
	=
	0.
	\]
	Hence,
	\(
	v=0.
	\)

	If \(b\neq1\), then \(u=0\) and consequently \(D=0\). If \(b=1\), then
	\(
	D(e_0)=ue_1,
	\
	D(e_1)=0.
	\)
	Since \(\alpha(e_1)=e_1\), the homogeneous element
	\(
	x=-ue_1
	\)
	is fixed by \(\alpha\), and
	\[
	\operatorname{ad}_x^k(e_0)
	=
	[-ue_1,e_0]
	=
	ue_1,
	\]
	while
	\(
	\operatorname{ad}_x^k(e_1)=0.
	\)
	Therefore,
	\(
	D=\operatorname{ad}_x^k
	\in
	\operatorname{Inn}_{\alpha^k}(\mathfrak{g}).
	\)

	Thus, for every \(b\neq0\) and every \(k\geq0\),
	\[
	\operatorname{Der}_{\alpha^{k+1}}(\mathfrak{g}^{3,b}_{1;1})
	=
	\operatorname{Inn}_{\alpha^k}
	(\mathfrak{g}^{3,b}_{1;1}).
	\]
	Since its center is trivial,
	\(
	\mathfrak{g}^{3,b}_{1;1}
	\)
	is complete for all \(b\in\mathbb{C}^{*}\).

	Let \(b=0\). Define the even linear map \(D\) by
	\(
	D(e_0)=e_0,
	D(e_1)=0.
	\)
	It is straightforward to verify that
	\(
	D\in \operatorname{Der}_{\alpha^{k+1}}
	(\mathfrak{g}^{3,0}_{1;1})
	\)
	for every \(k\geq0\). However, every inner derivation of
	\(\mathfrak{g}^{3,0}_{1;1}\) vanishes on \(e_0\), whereas
	\(
	D(e_0)=e_0.
	\)
	Hence, \(D\) is not inner, and
	\(
	\mathfrak{g}^{3,0}_{1;1}
	\)
	is not complete.

	Finally, consider
	\(
	\mathfrak{g}
	=
	\mathfrak{g}^{4,a}_{1;1},
	\
	a\neq0,1.
	\)
	Define the even linear map \(D\) by
	\[
	D(e_0)=e_0,
	\qquad
	D(e_1)=0.
	\]
	Then
	\(
	D\circ\alpha=\alpha\circ D,
	\)
	and a direct verification shows that
	\(
	D\in \operatorname{Der}_{\alpha^{k+1}}(\mathfrak{g})
	\)
	for every \(k\geq0\).

	On the other hand, there is no nonzero homogeneous element
	\(x\in\mathfrak{g}\) satisfying
	\(
	\alpha(x)=x.
	\)
	Therefore,
	\(
	\operatorname{Inn}_{\alpha^k}(\mathfrak{g})
	=
	\{0\}
	\)
	for every \(k\geq0\). Since \(D\neq0\), it follows that
	\(
	\mathfrak{g}^{4,a}_{1;1}
	\)
	is not complete.
\end{proof}

We now determine the complete $3$-dimensional multiplicative Hom-Lie
superalgebras occurring in the classification of Wang, Zhang, and Wei
\cite{wang2016}. Throughout this subsection, all Hom-Lie superalgebras
are defined over $\mathbb{K}=\mathbb{C}$, and
\(
\mathfrak{g}_{\bar 0}
=
\operatorname{span}\{e_1,e_2\},
\
\mathfrak{g}_{\bar 1}
=
\operatorname{span}\{e_3\}.
\)
We retain the notation $L^i_{2;1}$ used in \cite{wang2016}. The
matrices of the twisting maps are interpreted by columns.

Every linear map
\(
D:\mathfrak{g}\rightarrow\mathfrak{g}
\)
has a unique decomposition
\(
D=D_{\bar 0}+D_{\bar 1},
\)
where \(D_{\bar 0}\) is even and \(D_{\bar 1}\) is odd. Since the
twisting map and the bracket are even, if
\(
D\in \operatorname{Der}_{\alpha^k}(\mathfrak{g}),
\)
then both homogeneous components satisfy
\[
D_{\bar 0}\circ\alpha=\alpha\circ D_{\bar 0},
\qquad
D_{\bar 1}\circ\alpha=\alpha\circ D_{\bar 1},
\]
and each component separately satisfies the
\(\alpha^k\)-derivation identity. Consequently,
\(
\operatorname{Der}_{\alpha^k}(\mathfrak{g})
=
\operatorname{Der}_{\alpha^k}(\mathfrak{g})_{\bar 0}
\oplus
\operatorname{Der}_{\alpha^k}(\mathfrak{g})_{\bar 1}.
\)

It is therefore sufficient to compute the even and odd derivations
separately. Relative to the ordered homogeneous basis
\(\{e_1,e_2,e_3\}\), they have the forms
\[
D_{\bar 0}
=
\begin{pmatrix}
	m&n&0\\
	q&r&0\\
	0&0&v
\end{pmatrix},
\qquad
D_{\bar 1}
=
\begin{pmatrix}
	0&0&p\\
	0&0&s\\
	t&u&0
\end{pmatrix}.
\]

\begin{lemma}\label{lem:center-screening-3dim}
Among the $3$-dimensional complex multiplicative Hom-Lie
superalgebras of superdimension $(2|1)$ listed in \textnormal{\cite{wang2016}},
the families with trivial center are precisely
\[
L^6_{2;1},\quad
L^{9,a}_{2;1},\quad
L^{10}_{2;1},\quad
L^{11,a}_{2;1},\quad
L^{12,a}_{2;1},
\]
\[
L^{16,a}_{2;1},\quad
L^{17}_{2;1},\quad
L^{18,a,b}_{2;1},\quad
L^{19,a,b}_{2;1},\quad
L^{20,a,b,c}_{2;1},
\quad\text{and}\quad
L^{23}_{2;1}.
\]
All the remaining families in the Wang classification have nonzero
center, and hence are not complete.
\end{lemma}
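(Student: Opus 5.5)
The plan is a direct, case-by-case computation of \(C(\mathfrak g)\) for every family \(L^i_{2;1}\) in the classification of \cite{wang2016}, with the classification table (brackets and column-interpreted twisting matrices) as the only input. Since \(\mathfrak g\) is a Hom-Lie superalgebra, super-skew-symmetry gives \(C_1=C_2\), so we only need \(C(\mathfrak g)=\{x\mid [x,\mathfrak g]=0\}\). By the graded-centralizer proposition proved earlier (applied with \(I=\mathfrak g\), \(n=2\)), this center is graded. Hence it suffices to test a general even element \(x=ue_1+ve_2\) and the odd element \(x=we_3\) separately. For the even test, the conditions are \([x,e_1]=[x,e_2]=[x,e_3]=0\). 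For the odd test, they are \([e_3,e_1]=[e_3,e_2]=[e_3,e_3]=0\).

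For each family we will then read off a small linear system in \(u,v\) and a single condition on \(w\).

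\begin{itemize}
\item \textbf{Families claimed to have nonzero center.} It is enough to exhibit one explicit nonzero central basis vector or combination. Typically this is \(e_3\), when all brackets involving \(e_3\) vanish, or an even vector annihilated by every structure constant. This settles non-completeness immediately by Definition~\ref{def:complete}.
\item \textbf{Eleven listed families.} For \(L^6,L^{9,a},L^{10},L^{11,a},L^{12,a},L^{16,a},L^{17},L^{18,a,b},L^{19,a,b},L^{20,a,b,c},L^{23}\), we will show that the coefficient matrix of the even system has full rank \(2\) and that some bracket \([e_3,e_j]\) is nonzero, forcing \(w=0\).
\end{itemize}

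The parameter-dependent families need care. The rank must be verified for all admissible parameter values allowed in \cite{wang2016}, for example \(a\neq0\) or other excluded values. Where a structure constant is a parameter, we must check whether the permitted range ever makes it zero. If a degenerate value were admissible, the family would have to be split; the statement asserts it is not.

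The main obstacle is bookkeeping rather than theory. One has to transcribe the Wang table faithfully, including the convention that brackets not listed are zero apart from those forced by super-skew-symmetry. Note that \([e_3,e_3]\) is symmetric, not skew. For each parametrized family, one must confirm that no admissible parameter value produces a nontrivial kernel. I would organize this as one table recording, for each \(L^i_{2;1}\), the nonzero brackets, the resulting conditions on \(u,v,w\), and the conclusion (a central element, or \(C=0\)). The lemma then follows by inspection.
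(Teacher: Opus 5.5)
Your proposal is correct and follows essentially the same route as the paper. The paper applies the definition of the center directly to the multiplication tables of \cite{wang2016}, records the nonzero centers of the excluded families, and asserts zero center for the eleven listed families by direct calculation. Your parity splitting and rank-$2$ check are a tidy way of organizing that same computation. The rank check does go through: each listed family has $[e_1,e_2]=\gamma e_1$ with $\gamma\neq0$, which forces $u=v=0$. Each also has a nonzero bracket involving $e_3$ ($[e_3,e_3]\neq0$, $[e_1,e_3]=e_3$, or $[e_2,e_3]=\lambda e_3$), which forces $w=0$.
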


\begin{proof}
The assertion follows by applying the definition of the center to the
multiplication tables in \cite{wang2016}. More explicitly, the centers
of the families excluded above are
\[
\begin{array}{c|c}
\text{families}&C(\mathfrak{g})\\ \hline
L^2_{2;1}&\mathfrak{g}\\
L^3_{2;1}&\operatorname{span}\{e_1,e_2\}\\
L^4_{2;1},\,L^5_{2;1}&\operatorname{span}\{e_3\}\\
L^7_{2;1},\,L^8_{2;1}&\operatorname{span}\{e_1\}\\
L^{13}_{2;1},\,L^{14}_{2;1},\,L^{15}_{2;1}
&\operatorname{span}\{e_1\}\\
L^{21}_{2;1},\,L^{22}_{2;1}
&\operatorname{span}\{e_2\}.
\end{array}
\]
A direct calculation gives zero center for each of the families listed
in the statement.
\end{proof}

\begin{lemma}\label{lem:L6-L9}
The centerless families $L^6_{2;1}$ and $L^{9,a}_{2;1}$ are not
complete.

More precisely, for $L^6_{2;1}$ one has
\[
[e_1,e_2]=e_1,
\qquad
[e_3,e_3]=e_1,
\]
and the correct column interpretation of the twisting matrix in
\textnormal{\cite{wang2016}} is
\[
\alpha(e_1)=0,
\qquad
\alpha(e_2)=e_1,
\qquad
\alpha(e_3)=0.
\]
For every integer $k\geq 1$,
\[
\mathrm{\operatorname{Der}}_{\alpha^k}(\mathfrak{g})_{\bar 0}
=
\left\{
\begin{pmatrix}
0&n&0\\
0&0&0\\
0&0&v
\end{pmatrix}
\;\middle|\;
n,v\in\mathbb{K}
\right\},
\]
and
\[
\mathrm{\operatorname{Der}}_{\alpha^k}(\mathfrak{g})_{\bar 1}
=
\left\{
\begin{pmatrix}
0&0&p\\
0&0&0\\
0&u&0
\end{pmatrix}
\;\middle|\;
p,u\in\mathbb{K}
\right\}.
\]
Moreover,
\(
\mathrm{Inn}_{\alpha^{k-1}}(\mathfrak{g})=\{0\}.
\)

For $L^{9,a}_{2;1}$, where $a\in\mathbb{K}^{*}$,
\begin{align*}
& [e_1,e_2]=e_1,
\qquad
[e_3,e_3]=e_2,
\\
& \alpha(e_1)=0,
\qquad
\alpha(e_2)=a^2e_2,
\qquad
\alpha(e_3)=ae_3.
\end{align*}
For every integer $k\geq 1$,
\[
\mathrm{\operatorname{Der}}_{\alpha^k}(\mathfrak{g})_{\bar 0}
=
\left\{
\begin{pmatrix}
m&0&0\\
0&2a^kv&0\\
0&0&v
\end{pmatrix}
\;\middle|\;
m,v\in\mathbb{K},
\quad
(1-a^{2k})m=0
\right\},
\]
whereas
\[
\mathrm{\operatorname{Der}}_{\alpha^k}(\mathfrak{g})_{\bar 1}
=
\begin{cases}
\displaystyle
\left\{
\begin{pmatrix}
0&0&0\\
0&0&s\\
0&0&0
\end{pmatrix}
\;\middle|\;
s\in\mathbb{K}
\right\},
& a=1,\\[2.2em]
\{0\},
& a\neq1.
\end{cases}
\]
In particular, $L^{9,a}_{2;1}$ is not complete for any
$a\in\mathbb{K}^{*}$.
\end{lemma}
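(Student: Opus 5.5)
The plan is to prove both assertions by direct computation in the homogeneous bases, using the even/odd splitting of $\operatorname{Der}_{\alpha^k}(\mathfrak g)$ recorded just before Lemma~\ref{lem:center-screening-3dim}. For each family I write $D_{\bar 0}$ and $D_{\bar 1}$ in the general matrix forms with parameters $m,n,q,r,v$ and $p,s,t,u$. I then impose two conditions in order.

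First, I impose $D\circ\alpha=\alpha\circ D$ on $e_1,e_2,e_3$, which kills most parameters immediately.

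\begin{itemize}
\item For $L^6_{2;1}$, $\alpha$ is nilpotent with $\alpha^2=0$. Comparing $D\alpha(e_2)=D(e_1)$ with $\alpha D(e_2)$, and using $\alpha D(e_1)=D\alpha(e_1)=0$, forces $m=r$, $q=0$ in the even case. In the odd case it forces the $e_1$-coefficients of $D(e_3)$ to vanish appropriately.
\item For $L^{9,a}_{2;1}$, $\alpha$ is diagonal with eigenvalues $0,a^2,a$. Commutation forces $D$ to preserve eigenspaces. This gives $n=q=0$, and kills the off-diagonal entries except $s$ when $a=1$, together with the condition involving $a^2$ versus $a$.
\end{itemize}

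Second, I apply the $\alpha^k$-derivation identity to the finitely many basis pairs $(e_i,e_j)$, in particular to the defining relations $[e_1,e_2]$ and $[e_3,e_3]$ and to the vanishing brackets. The cases $k=1$ and $k\ge 2$ are treated separately.

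\begin{itemize}
\item For $L^6_{2;1}$ with $k\ge2$, $\alpha^k=0$, so the identity collapses to $D([x,y])=0$ and hence $D(e_1)=0$.
\item For $L^6_{2;1}$ with $k=1$, $\alpha$ has image $\operatorname{span}\{e_1\}$. Only the brackets involving $e_1$ and $e_2$ contribute, and checking them gives the same answer, namely the stated $n,v$ and $p,u$ families.
\item For $L^{9,a}_{2;1}$, the relation $[e_3,e_3]=e_2$ gives $D(e_2)=2a^k v\,e_2$. The relation $[e_1,e_2]=e_1$ gives $m=m\,a^{2k}+\ldots$, which produces $(1-a^{2k})m=0$.
\end{itemize}

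The main obstacle I expect is bookkeeping rather than ideas: keeping the column convention for $\alpha$ straight and not dropping the sign $(-1)^{|D||x|}$ in the odd case. I will check the displayed answers against a few sample pairs to catch such slips.

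For non-completeness, I determine the $\alpha$-fixed homogeneous elements, since the inner maps in $\operatorname{Inn}_{\alpha^{k-1}}$ are exactly the $\operatorname{ad}^{k-1}_x$ with $\alpha(x)=x$.

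\begin{itemize}
\item For $L^6_{2;1}$, $\alpha(c_1e_1+c_2e_2+c_3e_3)=c_2e_1$, so the only fixed element is $0$. Hence $\operatorname{Inn}_{\alpha^{k-1}}(\mathfrak g)=\{0\}$. The nonzero even derivation with $v=1$, i.e. $D(e_3)=e_3$ and $D(e_1)=D(e_2)=0$, lies in $\operatorname{Der}_{\alpha^k}(\mathfrak g)$, so $\operatorname{Der}\neq\operatorname{Inn}$.
\item For $L^{9,a}_{2;1}$, the fixed elements lie in $\operatorname{span}\{e_2\}$ (when $a^2=1$) and $\operatorname{span}\{e_3\}$ (when $a=1$). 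Any even inner derivation comes from $x=c\,e_2$. Since $[e_2,e_3]=0$, it annihilates $e_3$. Therefore the even derivation with $v=1$, which sends $e_3\mapsto e_3$ and $e_2\mapsto 2a^ke_2$, is not inner for any $a\in\mathbb K^{*}$.
\end{itemize}

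This shows $\operatorname{Der}_{\alpha^{k}}(\mathfrak g)\neq\operatorname{Inn}_{\alpha^{k-1}}(\mathfrak g)$ already for $k=1$ in both families, so neither is complete, despite the trivial centers from Lemma~\ref{lem:center-screening-3dim}.
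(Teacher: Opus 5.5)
Your plan is essentially the paper's own argument. You impose $D\circ\alpha=\alpha\circ D$ and the $\alpha^k$-derivation identity on basis pairs for $D_{\bar 0}$ and $D_{\bar 1}$, then show the $v=1$ derivation is outer: $\alpha$ has no nonzero fixed point on $L^6_{2;1}$, and on $L^{9,a}_{2;1}$ the even fixed points lie in $\mathbb{K}e_2$ with $[e_2,e_3]=0$. Two intermediate claims are misstated, though the final spaces you arrive at are right.

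For $L^6_{2;1}$, odd case: commutation kills $t$, via $D(e_1)=D\alpha(e_2)=\alpha D(e_2)=0$. It also kills the $e_2$-coefficient $s$ of $D(e_3)$, via $0=\alpha D(e_3)=s e_1$. The $e_1$-coefficient $p$ of $D(e_3)$ survives.

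For $L^{9,a}_{2;1}$ with $a=1$: commutation leaves both $u$ and $s$, since $e_2$ and $e_3$ share the eigenvalue $1$. So $u=0$ has to come from the odd derivation identity on $[e_3,e_3]=e_2$, namely $u e_3=[D(e_3),\alpha^k(e_3)]-[\alpha^k(e_3),D(e_3)]=0$.
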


\begin{proof}
Let $D$ be homogeneous and impose
\(
D\circ\alpha=\alpha\circ D
\)
together with the $\alpha^k$-derivation identity on all ordered pairs of homogeneous basis elements.

For $L^6_{2;1}$, these equations give
\begin{align*}
D_{\bar0}(e_1)=0,\qquad
D_{\bar0}(e_2)=ne_1,\qquad
D_{\bar0}(e_3)=ve_3,
\\
D_{\bar1}(e_1)=0,\qquad
D_{\bar1}(e_2)=ue_3,\qquad
D_{\bar1}(e_3)=pe_1.
\end{align*}
Since $\alpha(x)=x$ implies $x=0$, every inner derivation is zero.
The displayed derivation spaces are nonzero, so the algebra is not
complete.

For $L^{9,a}_{2;1}$, an even linear map commuting with $\alpha$ is
diagonal. Applying the derivation identity to
\[
[e_1,e_2]=e_1
\qquad\text{and}\qquad
[e_3,e_3]=e_2
\]
gives
\(
(1-a^{2k})m=0,
\
r=2a^kv.
\)
For an odd linear map, the commutation relation and derivation identity give $D=0$ unless $a=1$; when $a=1$, the only possible odd derivations satisfy $D(e_3)=se_2$.

Finally, the even derivation
\[
\begin{pmatrix}
0&0&0\\
0&2a^k&0\\
0&0&1
\end{pmatrix}
\]
is not inner. Hence, $L^{9,a}_{2;1}$ is never complete.
\end{proof}

\begin{lemma}\label{lem:L10-L12}
Consider the bracket
\[
[e_1,e_2]=e_1,
\qquad
[e_1,e_3]=e_3,
\qquad
[e_2,e_3]=[e_3,e_3]=0.
\]

\begin{enumerate}[label=\textup{\rm(\roman*)}]
\item For $L^{10}_{2;1}$,
\(
\alpha(e_1)=0,
\
\alpha(e_2)=e_2,
\
\alpha(e_3)=0,
\)
and, for every $k\geq1$,
\[
\mathrm{\operatorname{Der}}_{\alpha^k}(\mathfrak{g})
=
\left\{
\begin{pmatrix}
m&0&0\\
0&r&0\\
0&0&0
\end{pmatrix}
\;\middle|\;
m,r\in\mathbb{K}
\right\}.
\]
This algebra is not complete.

\item For $L^{11,a}_{2;1}$,
\(
\alpha(e_1)=0,
\
\alpha(e_2)=ae_2,
\
\alpha(e_3)=0,
\
a\neq1.
\)
If $a=0$, then
\[
\mathrm{\operatorname{Der}}_{\alpha^k}(\mathfrak{g})_{\bar0}
=
\left\{
\begin{pmatrix}
0&n&0\\
0&r&0\\
0&0&0
\end{pmatrix}
\;\middle|\;
n,r\in\mathbb{K}
\right\},
\]
and
\[
\mathrm{\operatorname{Der}}_{\alpha^k}(\mathfrak{g})_{\bar1}
=
\left\{
\begin{pmatrix}
0&0&0\\
0&0&0\\
0&u&0
\end{pmatrix}
\;\middle|\;
u\in\mathbb{K}
\right\}.
\]
If $a\neq0,1$, then
\[
\mathrm{\operatorname{Der}}_{\alpha^k}(\mathfrak{g})
=
\left\{
\begin{pmatrix}
m&0&0\\
0&r&0\\
0&0&0
\end{pmatrix}
\;\middle|\;
m,r\in\mathbb{K},
\quad
(1-a^k)m=0
\right\}.
\]
No member of $L^{11,a}_{2;1}$ is complete.

\item For $L^{12,a}_{2;1}$,
\[
\alpha(e_1)=ae_1,
\qquad
\alpha(e_2)=e_2,
\qquad
\alpha(e_3)=0,
\qquad
a\neq1.
\]
If $a=0$, then
\[
\mathrm{\operatorname{Der}}_{\alpha^k}(\mathfrak{g})
=
\left\{
\begin{pmatrix}
m&0&0\\
0&r&0\\
0&0&0
\end{pmatrix}
\;\middle|\;
m,r\in\mathbb{K}
\right\},
\]
and the algebra is not complete.

If $a\neq0,1$, then
\[
\mathrm{\operatorname{Der}}_{\alpha^k}(\mathfrak{g})
=
\left\{
\begin{pmatrix}
m&0&0\\
0&0&0\\
0&0&v
\end{pmatrix}
\;\middle|\;
m,v\in\mathbb{K},
\quad
(1-a^k)v=0
\right\},
\]
and
\[
\mathrm{Inn}_{\alpha^{k-1}}(\mathfrak{g})
=
\left\{
\begin{pmatrix}
m&0&0\\
0&0&0\\
0&0&0
\end{pmatrix}
\;\middle|\;
m\in\mathbb{K}
\right\}.
\]
Consequently,
\[
L^{12,a}_{2;1}\text{ is complete}
\quad\Longleftrightarrow\quad
a^k\neq1\ \text{for every integer }k\geq1.
\]
\end{enumerate}
\end{lemma}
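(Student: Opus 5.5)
The plan is a direct computation done family by family, following the scheme set up before Lemma~\ref{lem:center-screening-3dim}. By the parity splitting $D=D_{\bar0}+D_{\bar1}$, it suffices to treat the two matrix shapes $D_{\bar0}$ and $D_{\bar1}$ separately. For each of $L^{10}_{2;1}$, $L^{11,a}_{2;1}$, $L^{12,a}_{2;1}$ and each $k\geq1$ I will proceed in three steps.

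\emph{Step 1: commutation with $\alpha$.} I impose $D\circ\alpha=\alpha\circ D$ entrywise. Since $\alpha$ is diagonal in all three families, this forces off-diagonal entries to vanish whenever the corresponding eigenvalues of $\alpha$ differ. For example, in $L^{12,a}_{2;1}$ with $a\neq0,1$ it kills $n,q$ and all odd entries except possibly those linking $e_3$ with eigenvalue-$0$ vectors. In the degenerate cases ($a=0$, or $\alpha(e_1)=\alpha(e_3)=0$) more entries survive, which is where the extra parameters $n$ and $u$ in $L^{11,0}_{2;1}$ come from.

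\emph{Step 2: the $\alpha^k$-derivation identity.} I apply the identity
\[
D([x,y])=[D(x),\alpha^k(y)]+(-1)^{|D||x|}[\alpha^k(x),D(y)]
\]
to all ordered pairs of basis vectors, namely $[e_1,e_2]=e_1$, $[e_1,e_3]=e_3$, $[e_2,e_3]=0$, $[e_3,e_3]=0$, $[e_i,e_i]=0$, together with the reversed pairs obtained by super-skew-symmetry. The key simplification is that $\alpha^k$ annihilates $e_1$ and $e_3$ in $L^{10}_{2;1}$ and $L^{11,a}_{2;1}$, so most terms drop out. For instance, in $L^{12,a}_{2;1}$ the pair $(e_1,e_3)$ gives $v\,e_3=a^k v\,e_3$, which is the relation $(1-a^k)v=0$, while the pair $(e_1,e_2)$ forces $r=0$. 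Collecting these linear equations should give exactly the displayed spaces.

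\emph{Step 3: inner derivations.} I determine the $\alpha$-fixed homogeneous vectors and compute $\operatorname{ad}^{k-1}_x(y)=[x,\alpha^{k-1}(y)]$. In every family the fixed vectors lie in $\operatorname{span}\{e_2\}$, or are zero when $a\neq1$ in $L^{11,a}_{2;1}$. Moreover, every inner derivation has image inside $[\mathfrak g,\mathfrak g]=\operatorname{span}\{e_1,e_3\}$. Hence any derivation whose $(2,2)$ entry $r$ is nonzero is not inner. This single observation disposes of $L^{10}_{2;1}$, of all of $L^{11,a}_{2;1}$, and of $L^{12,0}_{2;1}$, since each of these admits the derivation $\operatorname{diag}(0,1,0)$.

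For $L^{12,a}_{2;1}$ with $a\neq0,1$, we have
\[
\operatorname{ad}^{k-1}_{e_2}(e_1)=-a^{k-1}e_1, \qquad \operatorname{ad}^{k-1}_{e_2}(e_2)=\operatorname{ad}^{k-1}_{e_2}(e_3)=0,
\]
so $\operatorname{Inn}_{\alpha^{k-1}}=\{\operatorname{diag}(m,0,0)\}$. The derivation space equals this space precisely when the relation $(1-a^k)v=0$ forces $v=0$, that is, when $a^k\neq1$. Since $C(\mathfrak g)=\{0\}$ by Lemma~\ref{lem:center-screening-3dim}, requiring this for all $k\geq1$ gives the stated criterion.

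The main obstacle is the bookkeeping in Step~2 for the odd maps and the degenerate values of $a$. Because $\alpha^k$ is not injective, fewer equations constrain $D$, and one has to track carefully which entries survive; sign errors in the odd case $(-1)^{|x|}$ can also easily introduce or remove parameters. I would first handle the generic case $a\neq0,1$, and then redo only the equations that change when $a=0$.
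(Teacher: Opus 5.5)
Your proposal is correct and takes essentially the same route as the paper. Like the paper, you impose $D\circ\alpha=\alpha\circ D$ and the $\alpha^k$-derivation identity on basis pairs for $D_{\bar0}$ and $D_{\bar1}$, then read off the inner derivations from the $\alpha$-fixed vectors: multiples of $e_2$, or none for $L^{11,a}_{2;1}$. Your observation that every inner derivation has image in $[\mathfrak g,\mathfrak g]=\operatorname{span}\{e_1,e_3\}$ gives one uniform reason why $\operatorname{diag}(0,1,0)$ is outer in $L^{10}_{2;1}$, $L^{11,a}_{2;1}$ and $L^{12,0}_{2;1}$; the paper only asserts this for $L^{10}_{2;1}$ and uses the absence of nonzero fixed points for $L^{11,a}_{2;1}$.
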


\begin{proof}
The assertions follow by substituting $D_{\bar0}$ and $D_{\bar1}$
into the commutation relation and derivation identity.

For $L^{10}_{2;1}$, the derivation equations give
\[
n=q=v=p=s=t=u=0,
\]
whereas $m$ and $r$ are arbitrary. The derivation with
$D(e_2)=e_2$ is outer.

For $L^{11,a}_{2;1}$, the case $a=0$ has the additional maps displayed
in the statement. When $a\neq0,1$, the equations reduce to
\[
n=q=v=p=s=t=u=0,
\qquad
(1-a^k)m=0,
\]
whereas $r$ is arbitrary. Since $\alpha$ has no nonzero fixed point,
all inner derivations vanish, whereas the derivation with
$D(e_2)=e_2$ is nonzero.

For $L^{12,a}_{2;1}$ with $a\neq0,1$, the only fixed homogeneous elements are the multiples of $e_2$. Moreover,
\(
\operatorname{ad}_{ce_2}^{\,k-1}(e_1)
=
[ce_2,\alpha^{k-1}(e_1)]
=
-ca^{k-1}e_1,
\)
and this inner derivation vanishes on $e_2$ and $e_3$. Hence, the inner space is exactly the one displayed above. The equality
\(
\mathrm{\operatorname{Der}}_{\alpha^k}(\mathfrak{g})
=
\mathrm{Inn}_{\alpha^{k-1}}(\mathfrak{g})
\)
holds for every $k\geq1$ precisely when $a^k\neq1$ for every
$k\geq1$.
\end{proof}

\begin{lemma}\label{lem:L16-L20}
Assume
\[
[e_1,e_2]=\gamma e_1,
\qquad
[e_2,e_3]=\lambda e_3,
\qquad
[e_1,e_3]=[e_3,e_3]=0,
\qquad
\gamma\lambda\neq0.
\]
Then, none of the families $L^{16}_{2;1}$--$L^{20}_{2;1}$ is complete.
Their derivation spaces are as follows.

\begin{enumerate}[label=\textup{\rm(\roman*)}]
\item For $L^{16,a}_{2;1}$,
\(
\alpha(e_1)=0,
\
\alpha(e_2)=e_2,
\
\alpha(e_3)=ae_3,
\
a\neq0,
\)
and
\[
\mathrm{\operatorname{Der}}_{\alpha^k}(\mathfrak{g})_{\bar0}
=
\left\{
\begin{pmatrix}
m&0&0\\
0&0&0\\
0&0&v
\end{pmatrix}
\;\middle|\;
m,v\in\mathbb{K}
\right\},
\]
\[
\mathrm{\operatorname{Der}}_{\alpha^k}(\mathfrak{g})_{\bar1}
=
\begin{cases}
\displaystyle
\left\{
\begin{pmatrix}
0&0&0\\
0&0&0\\
0&u&0
\end{pmatrix}
\;\middle|\;
u\in\mathbb{K}
\right\},
&a=1,\\[2.2em]
\{0\},
&a\neq1.
\end{cases}
\]

\item For $L^{17}_{2;1}$,
\(
\alpha(e_1)=0,
\
\alpha(e_2)=e_2,
\
\alpha(e_3)=0,
\)
and
\[
\mathrm{\operatorname{Der}}_{\alpha^k}(\mathfrak{g})_{\bar0}
=
\left\{
\begin{pmatrix}
m&0&0\\
0&r&0\\
0&0&v
\end{pmatrix}
\;\middle|\;
m,r,v\in\mathbb{K}
\right\},
\]
whereas
\[
\mathrm{\operatorname{Der}}_{\alpha^k}(\mathfrak{g})_{\bar1}
=
\begin{cases}
\displaystyle
\left\{
\begin{pmatrix}
0&0&p\\
0&0&0\\
t&0&0
\end{pmatrix}
\;\middle|\;
p,t\in\mathbb{K}
\right\},
&\gamma+\lambda=0,\\[2.2em]
\{0\},
&\gamma+\lambda\neq0.
\end{cases}
\]

\item For $L^{18,a,b}_{2;1}$,
\(
\alpha(e_1)=0,
\
\alpha(e_2)=ae_1+be_2,
\
\alpha(e_3)=0,
\
b\neq1.
\)
For every $k\geq1$,
\[
\mathrm{\operatorname{Der}}_{\alpha^k}(\mathfrak{g})_{\bar0}
=
\left\{
\begin{pmatrix}
m&n&0\\
0&r&0\\
0&0&v
\end{pmatrix}
\;\middle|\;
\begin{array}{l}
a(m-r)+bn=0,\\
(1-b^k)m=0,\\
(1-b^k)v=0
\end{array}
\right\},
\]
and
\[
\mathrm{\operatorname{Der}}_{\alpha^k}(\mathfrak{g})_{\bar1}
=
\left\{
\begin{pmatrix}
0&0&p\\
0&0&0\\
t&u&0
\end{pmatrix}
\;\middle|\;
\begin{array}{l}
at+bu=0,\\
(\gamma+\lambda b^k)t=0,\\
(\lambda+\gamma b^k)p=0
\end{array}
\right\}.
\]

\item For $L^{19,a,b}_{2;1}$,
\(
\alpha(e_1)=ae_1,
\
\alpha(e_2)=be_1+e_2,
\
\alpha(e_3)=0,
\
a\neq0,
\)
and
\[
\mathrm{\operatorname{Der}}_{\alpha^k}(\mathfrak{g})
=
\left\{
\begin{pmatrix}
m&n&0\\
0&0&0\\
0&0&v
\end{pmatrix}
\;\middle|\;
m,n,v\in\mathbb{K},
\quad
(1-a)n+bm=0
\right\}.
\]

\item For $L^{20,a,b,c}_{2;1}$,
\(
\alpha(e_1)=ae_1,
\
\alpha(e_2)=be_1+e_2,
\
\alpha(e_3)=ce_3,
\
ac\neq0,
\)
and
\[
\mathrm{\operatorname{Der}}_{\alpha^k}(\mathfrak{g})_{\bar0}
=
\left\{
\begin{pmatrix}
m&n&0\\
0&0&0\\
0&0&v
\end{pmatrix}
\;\middle|\;
m,n,v\in\mathbb{K},
\quad
(1-a)n+bm=0
\right\},
\]
\[
\mathrm{\operatorname{Der}}_{\alpha^k}(\mathfrak{g})_{\bar1}
=
\left\{
\begin{pmatrix}
0&0&p\\
0&0&0\\
t&u&0
\end{pmatrix}
\;\middle|\;
\begin{array}{l}
(c-a)p=0,\quad(\gamma+\lambda)p=0,\\
(a-c)t=0,\quad(\gamma+\lambda)t=0,\\
bt+(1-c)u=0
\end{array}
\right\}.
\]
\end{enumerate}
\end{lemma}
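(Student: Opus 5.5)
The plan is to treat all five families uniformly. Write a homogeneous candidate in the matrix forms $D_{\bar0}$, $D_{\bar1}$ fixed before Lemma~\ref{lem:center-screening-3dim}. First impose the commutation relation $D\circ\alpha=\alpha\circ D$ column by column, and only then impose the $\alpha^k$-derivation identity. That identity is checked on the ordered pairs $(e_1,e_2)$, $(e_1,e_3)$, $(e_2,e_3)$, $(e_3,e_3)$, $(e_1,e_1)$ and $(e_2,e_2)$; by super-skew-symmetry the remaining ordered pairs add nothing.

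\textbf{Commutation step.} In cases (i)--(iii) one has $\alpha(e_1)=0$, so $\alpha\circ D(e_1)=D(\alpha(e_1))=0$. Combined with the explicit form of $\alpha$ on $e_2,e_3$, this quickly kills most entries. For instance, in (i) the relation forces $q=0$, gives $(1-a)u=0$ for the odd part, and forces $n=0$ through $\alpha(e_2)=e_2$.

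\textbf{Derivation step.} Apply the identity to the relations $[e_1,e_2]=\gamma e_1$ and $[e_2,e_3]=\lambda e_3$, and to the vanishing brackets $[e_1,e_3]=[e_3,e_3]=0$. Since $\alpha^k(e_1)=0$ for $k\geq1$ in (i)--(iii), many terms drop out. This yields exactly the linear constraints in the statement. For example:
\begin{itemize}
\item the condition $(\gamma+\lambda b^k)t=0$ in (iii) arises from the pair $(e_2,e_3)$, where an odd $D$ sends $e_3\mapsto pe_1$ and $e_1\mapsto te_3$;
\item in (iv)--(v), the condition $(1-a)n+bm=0$ comes from the commutation relation applied to $e_2$;
\item the identity on $[e_1,e_2]$ then forces $r=0$, using $\gamma\neq0$ and the fact that $\alpha^k(e_2)$ has $e_2$-coefficient $1$.
\end{itemize}

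\textbf{Non-completeness.} Here I will compare each derivation space with the inner space built from homogeneous $\alpha$-fixed elements. The aim is to exhibit an explicit outer derivation in each family, chosen as follows.
\begin{itemize}
\item In (i), (ii) and (iii), the fixed vectors lie in $\operatorname{span}\{e_2\}$ (or there are none, when $b\neq1$ in (iii) and $a=0$). The inner derivations $\operatorname{ad}^{k-1}_{ce_2}$ act only on $e_1,e_3$ through $\alpha^{k-1}$, which vanishes on $e_1$ for $k\geq2$. Meanwhile the derivation $\operatorname{diag}(1,0,0)$ (or $\operatorname{diag}(1,\ast,\ast)$ in (iii) when $b^k=1$, else $\operatorname{diag}(0,0,v)$-type or $n$-type maps) survives and is outer.
\item In (iv)--(v), the fixed vectors are multiples of $e_2+\tfrac{b}{1-a}e_1$ when $a\neq1$. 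The corresponding inner derivations form a single one-parameter family. The even derivation space has dimension at least $2$, since $v$ is free, so some derivation is outer; the map $\operatorname{diag}(0,0,1)$ should not be inner because inner maps scale $e_3$ by a multiple of $\lambda$ tied to the $e_1$ entry. For $a=1$, the fixed set and the constraint $bm=0$ must be treated separately.
\end{itemize}

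\textbf{Main obstacle.} I expect the hardest part to be the bookkeeping in (iii) and (v), where $\alpha$ is non-diagonal and the parameters $a,b,c$ create several subcases (such as $b^k=1$ and $c=a$). There I would first fix $k$ and compute $\alpha^k$ explicitly, for example $\alpha^k(e_2)=b^{k-1}(ae_1+be_2)$ in (iii), before substituting into the identity.
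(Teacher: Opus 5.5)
Your plan is essentially the paper's proof. You solve the linear system from $D\circ\alpha=\alpha\circ D$ and the $\alpha^k$-derivation identity on basis pairs, then compare with the inner maps $\operatorname{ad}^{k-1}_x$ for $\alpha$-fixed $x$; you name explicit outer derivations where the paper compares dimensions, and the argument goes through.

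Two details of your sketch are off, though actually running the computation corrects them:
\begin{enumerate}
\item In (iii), the constraint $(\gamma+\lambda b^k)t=0$ comes from the pair $(e_1,e_2)$. The pair $(e_2,e_3)$ yields $s=0$ and $(\lambda+\gamma b^k)p=0$ instead.
\item When $b^k\neq1$, the maps $\operatorname{diag}(0,0,v)$ are not derivations, because $(1-b^k)v=0$. This does not hurt the argument: $b\neq1$ leaves $\alpha$ with no nonzero fixed vector, so $\operatorname{Inn}_{\alpha^{k-1}}(\mathfrak{g})=\{0\}$. Then any nonzero even map with $D(e_2)=ne_1+re_2$ and $bn=ar$ is outer.
\end{enumerate}

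Likewise, $\operatorname{diag}(0,0,1)$ is outer in (iv)--(v) also when $a=1$, so no separate case is needed there.
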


\begin{proof}
Each formula is obtained by solving the linear equations resulting from
\[
D\circ\alpha=\alpha\circ D
\]
and the $\alpha^k$-derivation identity on the ordered pairs
\[
(e_1,e_2),\quad(e_2,e_1),\quad(e_2,e_3),
\quad(e_3,e_2),
\]
together with the zero brackets.

For $L^{16,a}_{2;1}$, the even derivation space is two-dimensional,
whereas its even inner derivation space has dimension at most one.
For $L^{17}_{2;1}$, the even derivation space is three-dimensional,
whereas the inner derivation space has dimension at most one.

For $L^{18,a,b}_{2;1}$, the assumption $b\neq1$ implies that
$\alpha(x)=x$ only for $x=0$. Hence, \(\mathrm{Inn}_{\alpha^{k-1}}(\mathfrak{g})=\{0\},\) whereas the displayed even derivation space is nonzero.

For $L^{19,a,b}_{2;1}$ and $L^{20,a,b,c}_{2;1}$, the displayed even
derivation spaces have dimension at least two, while their even inner
derivation spaces have strictly smaller dimensions. Therefore, none of these Hom-Lie superalgebras is complete.
\end{proof}

\begin{lemma}\label{lem:L23}
Let $L^{23}_{2;1}$ be defined by
\[
[e_1,e_2]=\gamma e_1,
\qquad
[e_2,e_3]=\lambda e_3,
\qquad
[e_3,e_3]=\mu e_1,
\qquad
\gamma\lambda\mu\neq0,
\]
and
\(
\alpha(e_1)=0,
\
\alpha(e_2)=e_1,
\
\alpha(e_3)=0.
\)
Then, for every integer $k\geq1$,
\[
\mathrm{\operatorname{Der}}_{\alpha^k}(\mathfrak{g})_{\bar0}
=
\left\{
\begin{pmatrix}
0&n&0\\
0&0&0\\
0&0&0
\end{pmatrix}
\;\middle|\;
n\in\mathbb{K}
\right\},
\]
and
\[
\mathrm{\operatorname{Der}}_{\alpha^k}(\mathfrak{g})_{\bar1}
=
\left\{
\begin{pmatrix}
0&0&0\\
0&0&0\\
0&u&0
\end{pmatrix}
\;\middle|\;
u\in\mathbb{K}
\right\}.
\]
Moreover,
\(
\mathrm{Inn}_{\alpha^{k-1}}(\mathfrak{g})=\{0\},
\)
so $L^{23}_{2;1}$ is not complete.
\end{lemma}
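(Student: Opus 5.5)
The plan is a direct computation in the two homogeneous components, using the matrix forms $D_{\bar 0}$ and $D_{\bar 1}$ fixed before Lemma~\ref{lem:center-screening-3dim}. The key structural fact is that $\alpha$ is nilpotent: $\alpha(e_2)=e_1$ and $\alpha(e_1)=\alpha(e_3)=0$, so $\alpha^2=0$. Consequently the only solution of $\alpha(x)=x$ is $x=0$. Hence every $\operatorname{ad}^{k-1}_x$ with $\alpha$-fixed $x$ vanishes, which gives $\mathrm{Inn}_{\alpha^{k-1}}(\mathfrak g)=\{0\}$ at once, exactly as for $L^6_{2;1}$ in Lemma~\ref{lem:L6-L9}. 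Once the displayed derivation spaces are established, incompleteness follows because they are nonzero.

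First I impose $D\circ\alpha=\alpha\circ D$ on the basis vectors.
\begin{itemize}
\item For $D_{\bar 0}$, take $D(e_1)=me_1+qe_2$, $D(e_2)=ne_1+re_2$, $D(e_3)=ve_3$. Evaluating at $e_1$ gives $0=qe_1$, so $q=0$. Evaluating at $e_2$ gives $me_1+qe_2=re_1$, so $m=r$.
\item For $D_{\bar 1}$, take $D(e_1)=te_3$, $D(e_2)=ue_3$, $D(e_3)=pe_1+se_2$. Evaluating at $e_2$ gives $te_3=0$, so $t=0$. Evaluating at $e_3$ gives $0=se_1$, so $s=0$.
\end{itemize}

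Next I impose the $\alpha^k$-derivation identity, splitting into the cases $k\ge2$ and $k=1$.
\begin{itemize}
\item For $k\geq2$ we have $\alpha^k=0$, so the right-hand side vanishes identically. The identity therefore says that $D$ kills $[\mathfrak g,\mathfrak g]=\operatorname{span}\{e_1,e_3\}$, using $\gamma\lambda\mu\ne0$. This forces $m=0$ (hence $r=0$), $v=0$ and $p=0$.
\item For $k=1$, I evaluate on the pairs $(e_1,e_2)$ and $(e_2,e_3)$, using $\alpha(\mathfrak g)=\operatorname{span}\{e_1\}$ and the brackets $[e_1,e_1]=[e_1,e_3]=0$.
  \begin{itemize}
  \item The pair $(e_1,e_2)$ gives $\gamma D(e_1)=[D(e_1),e_1]=0$, so $m=0$.
  \item The pair $(e_2,e_3)$ gives $\lambda D(e_3)=[e_1,D(e_3)]=0$, so $v=0$ in the even case and $p=0$ in the odd case.
  \end{itemize}
\end{itemize}
In both cases only $n$ and $u$ survive.

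It then remains to check sufficiency: the maps with $D(e_2)=ne_1$ (even) or $D(e_2)=ue_3$ (odd), and all other basis images zero, are genuine $\alpha^k$-derivations for every $k\ge1$.
\begin{itemize}
\item Both maps commute with $\alpha$, since $D\alpha$ and $\alpha D$ both vanish: $D(e_1)=0$ and $\alpha(e_1)=\alpha(e_3)=0$.
\item The left-hand side $D([x,y])$ is always zero, since brackets land in $\operatorname{span}\{e_1,e_3\}$ and $D$ kills that span.
\item Every right-hand term has the form $[D(x),\alpha^k(y)]$ or $\pm[\alpha^k(x),D(y)]$, with $\alpha^k(\cdot)\in\operatorname{span}\{e_1\}$ and $D(\cdot)\in\operatorname{span}\{e_1,e_3\}$. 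Such terms vanish because $[e_1,e_1]=[e_1,e_3]=[e_3,e_1]=0$.
\end{itemize}

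The only step needing care is the sign bookkeeping in the odd case for $k=1$. I do not expect it to be an obstacle, because every relevant bracket is zero regardless of sign. The real content is simply the nilpotency of $\alpha$, which simultaneously kills the inner derivations and truncates the derivation equations.
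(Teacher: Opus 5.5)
Your proposal is correct and takes essentially the same route as the paper. Like the paper, you impose $D\circ\alpha=\alpha\circ D$ and the $\alpha^k$-derivation identity on the homogeneous matrix forms, and you note that $\alpha(x)=x$ forces $x=0$, so every inner derivation vanishes. Your version is more explicit than the paper's one-line proof: it uses $\alpha^2=0$, splits into $k\geq 2$ and $k=1$, and checks that the surviving maps really are $\alpha^k$-derivations.
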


\begin{proof}
The commutation relation and the derivation identity give
\begin{align*}
& D_{\bar0}(e_1)=D_{\bar0}(e_3)=0,
\quad
D_{\bar0}(e_2)=ne_1,
\\
& D_{\bar1}(e_1)=D_{\bar1}(e_3)=0,
\quad
D_{\bar1}(e_2)=ue_3.
\end{align*}
The equation $\alpha(x)=x$ has only the zero solution. Thus, every inner derivation is zero, whereas the displayed derivation spaces are nonzero.
\end{proof}

\begin{theorem}\label{complete3dim}
Among the $3$-dimensional complex multiplicative Hom-Lie
superalgebras of superdimension $(2|1)$ listed in
\textnormal{\cite{wang2016}}, the complete ones are precisely
\[
L^{12,a}_{2;1}:
\qquad
[e_1,e_2]=e_1,
\qquad
[e_1,e_3]=e_3,
\qquad
\alpha=
\begin{pmatrix}
a&0&0\\
0&1&0\\
0&0&0
\end{pmatrix},
\]
where
\(
a\in\mathbb{C}^{*}
\)
and
\(
a^k\neq1
\)
for each integer \(k\geq 1\).

Equivalently, $a$ is nonzero and not a root of unity.
\end{theorem}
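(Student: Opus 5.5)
The proof will be a bookkeeping argument that assembles the preceding lemmas; no new computation is needed beyond what they record. By Lemma~\ref{lem:center-screening-3dim}, every family in the Wang classification other than
\[
L^6_{2;1},\ L^{9,a}_{2;1},\ L^{10}_{2;1},\ L^{11,a}_{2;1},\ L^{12,a}_{2;1},\ L^{16,a}_{2;1},\ L^{17}_{2;1},\ L^{18,a,b}_{2;1},\ L^{19,a,b}_{2;1},\ L^{20,a,b,c}_{2;1},\ L^{23}_{2;1}
\]
has nonzero center. These families therefore fail the first condition of Definition~\ref{def:complete} and can be discarded immediately. What remains is to run through the centerless list and decide, for each family, whether $\operatorname{Der}_{\alpha^{k+1}}(\mathfrak g)=\operatorname{Inn}_{\alpha^k}(\mathfrak g)$ for all $k\geq0$. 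Equivalently, after shifting the index, whether $\operatorname{Der}_{\alpha^{k}}=\operatorname{Inn}_{\alpha^{k-1}}$ for all $k\geq1$, which is the normalization used in the lemmas.

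The negative cases are all covered by earlier lemmas.
\begin{itemize}
\item Lemma~\ref{lem:L6-L9} rules out $L^6_{2;1}$ and all $L^{9,a}_{2;1}$.
\item Lemma~\ref{lem:L10-L12}(i),(ii) rules out $L^{10}_{2;1}$ and all $L^{11,a}_{2;1}$.
\item Lemma~\ref{lem:L16-L20} rules out $L^{16}_{2;1}$ through $L^{20}_{2;1}$.
\item Lemma~\ref{lem:L23} rules out $L^{23}_{2;1}$.
\end{itemize}
In each case the lemma exhibits a nonzero derivation that is not inner. Often this is because $\alpha$ has no nonzero fixed point, so every inner derivation vanishes; otherwise it is a dimension count showing the inner space is strictly smaller.

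This leaves only $L^{12,a}_{2;1}$, which I will split into the cases $a=0$ and $a\neq0$. Note that $a\neq 1$ is already part of the family's definition. Lemma~\ref{lem:L10-L12}(iii) shows that $a=0$ is not complete. For $a\neq0,1$, the same lemma gives both spaces explicitly:
\begin{itemize}
\item the derivations are the diagonal maps $\mathrm{diag}(m,0,v)$ with $(1-a^k)v=0$;
\item the inner derivations are the maps $\mathrm{diag}(m,0,0)$, coming from $\operatorname{ad}_{ce_2}^{k-1}$ with $m=-ca^{k-1}$, which is surjective onto $\mathbb K$ since $a\neq0$.
\end{itemize}
So the two spaces coincide at level $k$ exactly when $a^k\neq1$. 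Completeness, which requires equality for every $k\geq1$, is therefore equivalent to $a^k\neq1$ for all $k\geq1$. Together with $a\neq0$, this says that $a$ is not a root of unity. The condition $a\neq 1$ is then automatic, since $1$ is itself a root of unity.

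I would also double-check that $C(L^{12,a}_{2;1})=0$ directly from the bracket table. If $x=ue_1+we_2+ze_3$, then
\[
[x,e_2]=ue_1,\qquad [x,e_1]=-we_1-ze_3,
\]
and both must vanish, forcing $u=w=z=0$.

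The main obstacle is not logical but one of verification. The argument leans on the correctness of the derivation spaces in Lemma~\ref{lem:L10-L12}(iii), in particular that $v$ is forced to vanish exactly when $a^k\neq1$. If I were to recheck one computation, it would be the identity on $[e_1,e_3]=e_3$ with $D=\mathrm{diag}(m,0,v)$. This gives
\[
ve_3=[me_1,\alpha^k e_3]+[a^ke_1,ve_3],
\]
and since $\alpha^k e_3=0$ for $k\geq1$, it reduces to $ve_3=a^kve_3$, that is, $(1-a^k)v=0$, as claimed. Finally, I would note that the theorem statement's $\alpha$ matrix matches the family, and conclude.
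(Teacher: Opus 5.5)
Your proposal is correct and is essentially the paper's own proof. You discard the families with nonzero center via Lemma~\ref{lem:center-screening-3dim}, eliminate the remaining centerless families by Lemmas~\ref{lem:L6-L9}, \ref{lem:L10-L12}, \ref{lem:L16-L20} and \ref{lem:L23}, and read off from Lemma~\ref{lem:L10-L12}(iii) that $\operatorname{Der}_{\alpha^k}=\operatorname{Inn}_{\alpha^{k-1}}$ for all $k\geq1$ exactly when $a^k\neq1$ for every $k\geq1$; your direct checks that $C(L^{12,a}_{2;1})=\{0\}$ and that $[e_1,e_3]=e_3$ forces $(1-a^k)v=0$ are correct and only make explicit what the paper leaves to the lemmas.
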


\begin{proof}
By Lemma~\ref{lem:center-screening-3dim}, all families with nonzero
center are excluded. Lemmas~\ref{lem:L6-L9},
\ref{lem:L10-L12}, \ref{lem:L16-L20}, and \ref{lem:L23}
show that every remaining centerless family is not complete, except
for $L^{12,a}_{2;1}$ with
\(
a^k\neq1
\)
for all \(k\geq1.\)
For this family, Lemma~\ref{lem:L10-L12} gives
\(
\mathrm{\operatorname{Der}}_{\alpha^k}(\mathfrak{g})
=
\mathrm{Inn}_{\alpha^{k-1}}(\mathfrak{g})
\)
for every $k\geq1$, and its center is zero. Hence it is complete.
\end{proof}

\begin{remark}
The parameter range for $L^{12,a}_{2;1}$ given in
\cite{wang2016} contains a redundancy at $a=0$, because that value
coincides with $L^{10}_{2;1}$. This does not affect Theorem~\ref{complete3dim} because
$L^{12,0}_{2;1}=L^{10}_{2;1}$ is not complete.
\end{remark}

\begin{definition}\label{def:Mor-nary}
Let $\mathfrak{g}$ be a $\mathbb{Z}_2$-graded linear space and let
\(
[\cdot,\ldots,\cdot]:
\mathfrak{g}^{\times n}\rightarrow\mathfrak{g}
\)
be an even $n$-linear super-skew-symmetric map. We denote by
\(
\mathrm{Mor}
\bigl(\mathfrak{g},[\cdot,\ldots,\cdot]\bigr)
\)
the set of all even linear maps
$\alpha:\mathfrak{g}\rightarrow\mathfrak{g}$ for which
\(
\bigl(
\mathfrak{g},
[\cdot,\ldots,\cdot],
\alpha
\bigr)
\)
is a multiplicative $n$-ary Hom-Lie superalgebra.
\end{definition}

\begin{proposition}\label{prop:Mor-conjugation}
Let $\mathfrak{g}$ be a $\mathbb{Z}_2$-graded linear space, and let
\[
[\cdot,\ldots,\cdot]_1,
\qquad
[\cdot,\ldots,\cdot]_2:
\mathfrak{g}^{\times n}\longrightarrow\mathfrak{g}
\]
be two even $n$-linear super-skew-symmetric maps. Suppose that there
exists an even invertible linear map
$\phi:\mathfrak{g}\rightarrow\mathfrak{g}$ such that
\[
\phi([x_1,\ldots,x_n]_1)
=
[\phi(x_1),\ldots,\phi(x_n)]_2
\]
for all homogeneous $x_1,\ldots,x_n\in\mathfrak{g}$. Then
\[
\mathrm{Mor}
\bigl(\mathfrak{g},[\cdot,\ldots,\cdot]_2\bigr)
=
\left\{
\phi\alpha\phi^{-1}
\;\middle|\;
\alpha\in
\mathrm{Mor}
\bigl(\mathfrak{g},[\cdot,\ldots,\cdot]_1\bigr)
\right\}.
\]
\end{proposition}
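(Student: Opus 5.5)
We prove the two inclusions by direct transport of structure along $\phi$. First fix $\alpha\in\mathrm{Mor}(\mathfrak{g},[\cdot,\ldots,\cdot]_1)$ and set $\alpha'=\phi\alpha\phi^{-1}$. Since $\phi$, $\phi^{-1}$ and $\alpha$ are even, $\alpha'$ is even. The main tool is the rewriting of the hypothesis as
\[
[y_1,\ldots,y_n]_2=\phi\bigl([\phi^{-1}(y_1),\ldots,\phi^{-1}(y_n)]_1\bigr),
\]
valid for all $y_j\in\mathfrak{g}$ by multilinearity. Parity is preserved throughout, so homogeneous elements correspond to homogeneous elements of the same degree under $\phi^{\pm1}$, and all Koszul signs are unchanged.

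For multiplicativity of $\alpha'$, compute
\[
\alpha'([y_1,\ldots,y_n]_2)=\phi\alpha\bigl([\phi^{-1}y_1,\ldots,\phi^{-1}y_n]_1\bigr)=\phi\bigl([\alpha\phi^{-1}y_1,\ldots,\alpha\phi^{-1}y_n]_1\bigr).
\]
This equals $[\alpha'y_1,\ldots,\alpha'y_n]_2$ by the rewritten hypothesis. For the super-Hom-Nambu identity \eqref{shni} for $([\cdot,\ldots,\cdot]_2,\alpha')$ at homogeneous $x_1,\ldots,x_{n-1},y_1,\ldots,y_n$, set $a_r=\phi^{-1}(x_r)$ and $b_s=\phi^{-1}(y_s)$. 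Then every nested bracket transforms as
\[
[\alpha'(x_1),\ldots,\alpha'(x_{n-1}),[y_1,\ldots,y_n]_2]_2=\phi\bigl([\alpha(a_1),\ldots,\alpha(a_{n-1}),[b_1,\ldots,b_n]_1]_1\bigr).
\]
The same holds term by term on the right-hand side. Applying $\phi$ to the identity \eqref{shni} for $([\cdot,\ldots,\cdot]_1,\alpha)$ at $(a,b)$ then yields the identity for $([\cdot,\ldots,\cdot]_2,\alpha')$, with the signs $(-1)^{|X||Y|_{i-1}}$ matching because $|a_r|=|x_r|$ and $|b_s|=|y_s|$. Super-skew-symmetry of $[\cdot,\ldots,\cdot]_2$ is part of the hypothesis. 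Hence $\alpha'\in\mathrm{Mor}(\mathfrak{g},[\cdot,\ldots,\cdot]_2)$, which gives the inclusion $\supseteq$.

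For the reverse inclusion we use symmetry. The map $\phi^{-1}$ is even, invertible, and satisfies $\phi^{-1}([y_1,\ldots,y_n]_2)=[\phi^{-1}y_1,\ldots,\phi^{-1}y_n]_1$. Applying the first part with the roles of the two brackets exchanged, any $\alpha'\in\mathrm{Mor}(\mathfrak{g},[\cdot,\ldots,\cdot]_2)$ gives $\alpha:=\phi^{-1}\alpha'\phi\in\mathrm{Mor}(\mathfrak{g},[\cdot,\ldots,\cdot]_1)$. Since $\alpha'=\phi\alpha\phi^{-1}$, the inclusion $\subseteq$ follows.

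The only step needing care is the nested-bracket transport in the Hom-Nambu identity. There one must check that the inner bracket $[x_1,\ldots,x_{n-1},y_i]_2$ equals $\phi([a_1,\ldots,a_{n-1},b_i]_1)$, and that the $\alpha'(y_s)=\phi(\alpha(b_s))$ factor through $\phi$ before the outer bracket is transported. This is a direct two-level application of the rewritten hypothesis.
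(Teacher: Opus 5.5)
Your proposal is correct and follows essentially the same route as the paper: transport multiplicativity and the super-Hom-Nambu identity along $\phi$ to get $\phi\alpha\phi^{-1}\in\mathrm{Mor}(\mathfrak{g},[\cdot,\ldots,\cdot]_2)$, then apply the same argument to $\phi^{-1}$ for the reverse inclusion. Your explicit two-level transport of the nested brackets, together with the observation that $\phi^{\pm1}$ preserve parity so the Koszul signs match, fills in the step the paper states in one line.
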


\begin{proof}
Let
\(
\alpha\in
\mathrm{Mor}
\bigl(\mathfrak{g},[\cdot,\ldots,\cdot]_1\bigr)
\)
and set $\beta=\phi\alpha\phi^{-1}$. Transporting the
super-Hom-Nambu identity through the even isomorphism $\phi$ shows
that
\[
\bigl(
\mathfrak{g},
[\cdot,\ldots,\cdot]_2,
\beta
\bigr)
\]
satisfies the super-Hom-Nambu identity. Moreover,
\[
\begin{aligned}
\beta([x_1,\ldots,x_n]_2)
&=
\phi\alpha
\bigl(
[\phi^{-1}(x_1),\ldots,\phi^{-1}(x_n)]_1
\bigr)\\
&=
[\beta(x_1),\ldots,\beta(x_n)]_2.
\end{aligned}
\]
Thus,
\(
\phi\alpha\phi^{-1}
\in
\mathrm{Mor}
\bigl(\mathfrak{g},[\cdot,\ldots,\cdot]_2\bigr).
\)
This proves one inclusion. Applying the same argument to
$\phi^{-1}$ proves the reverse inclusion.
\end{proof}

\begin{theorem}\label{2d-3homlie}
Let
\(
\mathfrak{g}_{\bar0}=\mathbb{C}e_0,
\
\mathfrak{g}_{\bar1}=\mathbb{C}e_1.
\)
Every $2$-dimensional complex multiplicative $3$-Hom-Lie
superalgebra of superdimension $(1|1)$ is isomorphic to precisely one
of the following families.

\begin{enumerate}[label=\textup{\arabic*.}]
\item
$\mathfrak{g}^{1,a,b}_{1;1}$ is abelian, with
\(
\alpha(e_0)=ae_0,
\
\alpha(e_1)=be_1,
\
a,b\in\mathbb{C}.
\)

\item
$\mathfrak{g}^{2,a}_{1;1}$ is defined by
\[
[e_1,e_1,e_1]=e_1,
\qquad
\alpha(e_0)=ae_0,
\qquad
\alpha(e_1)=0,
\qquad
a\in\mathbb{C}.
\]

\item
$\mathfrak{g}^{3}_{1;1}$ is defined by
\(
[e_1,e_1,e_0]=e_0,
\
\alpha=0.
\)

\item
$\mathfrak{g}^{4,\tau}_{1;1}$ is defined by
\[
[e_1,e_1,e_0]=e_0,
\qquad
[e_1,e_1,e_1]=\tau e_1,
\qquad
\alpha=0,
\qquad
\tau\in\mathbb{C}^{*}.
\]
Moreover,
\(
\mathfrak{g}^{4,\tau}_{1;1}
\cong
\mathfrak{g}^{4,\tau'}_{1;1}
\Longleftrightarrow
\tau=\tau'.
\)
\end{enumerate}
\end{theorem}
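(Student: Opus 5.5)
The plan is to parametrize all admissible brackets and twisting maps on $\mathfrak{g}=\mathbb{C}e_0\oplus\mathbb{C}e_1$, impose the defining identities, and then normalize by even rescalings. I would use Proposition~\ref{prop:Mor-conjugation} to organize the isomorphism question.

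\emph{Step 1: the bracket.} Super-skew-symmetry forces every bracket containing $e_0$ twice to vanish, since $e_0$ is even and therefore antisymmetric with itself. It also allows the odd vector $e_1$ to be repeated, because odd entries are symmetric in adjacent transpositions. Parity then determines the possible values. Since $|e_1|+|e_1|+|e_1|=\bar 1$, the bracket $[e_1,e_1,e_1]$ lies in $\mathbb{C}e_1$. Since $|e_1|+|e_1|+|e_0|=\bar 0$, the bracket $[e_1,e_1,e_0]$ lies in $\mathbb{C}e_0$. All other orderings are determined by super-skew-symmetry. Hence every bracket has the form
\[
[e_1,e_1,e_0]=s\,e_0,\qquad [e_1,e_1,e_1]=t\,e_1,\qquad s,t\in\mathbb{C}.
\]
Because $\alpha$ is even, it is diagonal, with $\alpha(e_0)=ae_0$ and $\alpha(e_1)=be_1$.

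\emph{Step 2: the constraints.} Multiplicativity yields the two conditions $t(b-b^3)=0$ and $sa(1-b^2)=0$. Next I would write out the super-Hom-Nambu identity \eqref{shni} on the homogeneous test tuples
\[
x=(e_1,e_1)\quad\text{with}\quad y\in\{(e_1,e_1,e_1),\,(e_1,e_1,e_0)\},
\]
together with $x=(e_1,e_0)$ and the corresponding $y$. Each of these gives a scalar equation in $a,b,s,t$. I expect these equations to force the following: if $s\neq0$, then $\alpha=0$; if $s=0$ and $t\neq0$, then $b=0$ while $a$ remains free. The case $s=t=0$ is abelian and every diagonal $\alpha$ is allowed.

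This step is the main obstacle, since the Koszul signs $(-1)^{|X||Y|_{i-1}}$ with odd entries must be tracked carefully. For instance, with $x=(e_1,e_1)$ we have $|X|=\bar 0$, so all signs are $+1$. With $x=(e_1,e_0)$ the signs alternate. Moreover, the super-skew-symmetry reorderings needed to bring each term to the normal form $[e_1,e_1,\cdot]$ introduce additional signs. I would first compute the identity with $x=y$-entries all equal to $e_1$. This should give a relation of the form $b^2t^2=3b^2t^2$ or similar, which forces $bt=0$. Then I would compute the identity for $(e_1,e_1;e_1,e_1,e_0)$ to obtain relations of the form $a s=0$, $b s=0$ and $ast=0$.

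\emph{Step 3: normalization and uniqueness.} An even isomorphism is $\phi=\operatorname{diag}(\lambda,\mu)$, and it transforms the parameters by
\[
s'=s/\mu^2,\qquad t'=t/\mu^2,
\]
while $\alpha$ is unchanged because diagonal matrices commute. If $s\neq0$, I choose $\mu$ with $\mu^2=s$ to get $s'=1$. This produces $\mathfrak{g}^3_{1;1}$ when $t=0$ and $\mathfrak{g}^{4,\tau}_{1;1}$ with $\tau=t/s$ otherwise. If $s=0$ and $t\neq0$, I set $t'=1$ to obtain $\mathfrak{g}^{2,a}_{1;1}$.

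For uniqueness, note that the ratio $t/s$ is invariant under all rescalings. Likewise, $a$ and $b$ are invariant, since by Proposition~\ref{prop:Mor-conjugation} the twisting maps of isomorphic structures are conjugate and hence have the same eigenvalues on each graded component. Finally, the families are distinguished from one another by which of $s$ and $t$ vanish, since this property is preserved by rescaling.
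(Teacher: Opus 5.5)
Your proposal is correct and follows essentially the same route as the paper: reduce to $[e_1,e_1,e_0]=s\,e_0$, $[e_1,e_1,e_1]=t\,e_1$ with diagonal $\alpha$, impose multiplicativity and the super-Hom-Nambu identity on basis tuples, split into cases, and normalize by even rescalings $\operatorname{diag}(\lambda,\mu)$, under which $t/s$ is invariant. One correction to your bookkeeping: the tuple $(e_1,e_1;e_1,e_1,e_0)$ yields only $abst=0$, so in the case $s\neq0$, $t=0$ the relation that actually forces $b=0$ is $bs(at-3bs)=0$, which comes from $x=(e_1,e_0)$ (or $(e_0,e_1)$) with $y=(e_1,e_1,e_1)$; then $a=0$ follows from the multiplicativity condition $as(1-b^2)=0$.
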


\begin{proof}
By parity and super-skew-symmetry, the only possibly nonzero basic
brackets are
\[
[e_1,e_1,e_0]=ce_0,
\qquad
[e_1,e_1,e_1]=de_1.
\]
Every even linear twisting map has the form
\[
\alpha(e_0)=ae_0,
\qquad
\alpha(e_1)=be_1.
\]
Multiplicativity and the super-Hom-Nambu identity are equivalent to
\[
ac(1-b^2)=0,
\qquad
bd(1-b^2)=0,
\]
\[
bc(ad-3bc)=0,
\qquad
abcd=0,
\qquad
b^2d^2=0.
\]

The first two equations follow from multiplicativity applied to
\[
[e_1,e_1,e_0]=ce_0
\qquad\text{and}\qquad
[e_1,e_1,e_1]=de_1,
\]
respectively. By multilinearity, it is sufficient to verify the
super-Hom-Nambu identity on homogeneous basis elements. The choices
\[
(x_1,x_2;y_1,y_2,y_3)
=
(e_0,e_1;e_1,e_1,e_1),
\]
\[
(e_1,e_1;e_0,e_1,e_1),
\qquad\text{and}\qquad
(e_1,e_1;e_1,e_1,e_1)
\]
give, respectively,
\(
bc(ad-3bc)=0,\
abcd=0,\
b^2d^2=0.
\)
All remaining choices either give zero identically or reproduce these
relations up to a nonzero scalar factor or sign.

If $c=d=0$, then the algebra is abelian. If $c=0$ and $d\neq0$, then
$b=0$, and a change of the odd basis element normalizes $d$ to $1$.
If $c\neq0$ and $d=0$, the displayed equations imply $a=b=0$, and
$c$ can be normalized to $1$. Finally, if $cd\neq0$, then
$a=b=0$; after normalizing $c$ to $1$, the invariant ratio
\(
\tau=\frac{d}{c}
\)
remains. As the ground field is $\mathbb{C}$, the required square roots for these normalizations exist.

An even isomorphism has the form
\[
\phi(e_0)=xe_0,
\qquad
\phi(e_1)=ye_1,
\qquad
xy\neq0.
\]
Both $c$ and $d$ are multiplied by $y^2$ under this change of basis,
so $d/c$ is invariant. This proves the stated isomorphism assertions
and exhaustiveness of the list.
\end{proof}

\begin{theorem}\label{derivations-2d-3homlie}
For every integer $k\geq1$, the spaces of $\alpha^k$-derivations of the algebras in Theorem~\ref{2d-3homlie} are as follows.
\begin{enumerate}[label=\textup{\arabic*.}]
\item For the abelian algebra $\mathfrak{g}^{1,a,b}_{1;1}$,
\[
\mathrm{\operatorname{Der}}_{\alpha^k}(\mathfrak{g})_{\bar0}
=
\left\{
\begin{pmatrix}
m&0\\
0&q
\end{pmatrix}
\;\middle|\;
m,q\in\mathbb{C}
\right\},
\]
and
\[
\mathrm{\operatorname{Der}}_{\alpha^k}(\mathfrak{g})_{\bar1}
=
\begin{cases}
\displaystyle
\left\{
\begin{pmatrix}
0&n\\
p&0
\end{pmatrix}
\;\middle|\;
n,p\in\mathbb{C}
\right\},
&a=b,\\[2.2em]
\{0\},
&a\neq b.
\end{cases}
\]

\item For $\mathfrak{g}^{2,a}_{1;1}$,
\[
\mathrm{\operatorname{Der}}_{\alpha^k}(\mathfrak{g})_{\bar0}
=
\left\{
\begin{pmatrix}
m&0\\
0&0
\end{pmatrix}
\;\middle|\;
m\in\mathbb{C}
\right\},
\]
and
\[
\mathrm{\operatorname{Der}}_{\alpha^k}(\mathfrak{g})_{\bar1}
=
\begin{cases}
\displaystyle
\left\{
\begin{pmatrix}
0&0\\
p&0
\end{pmatrix}
\;\middle|\;
p\in\mathbb{C}
\right\},
&a=0,\\[2.2em]
\{0\},
&a\neq0.
\end{cases}
\]

\item For $\mathfrak{g}^{3}_{1;1}$,
\[
\mathrm{\operatorname{Der}}_{\alpha^k}(\mathfrak{g})_{\bar0}
=
\left\{
\begin{pmatrix}
0&0\\
0&q
\end{pmatrix}
\;\middle|\;
q\in\mathbb{C}
\right\},
\]
and
\[
\mathrm{\operatorname{Der}}_{\alpha^k}(\mathfrak{g})_{\bar1}
=
\left\{
\begin{pmatrix}
0&n\\
0&0
\end{pmatrix}
\;\middle|\;
n\in\mathbb{C}
\right\}.
\]

\item For $\mathfrak{g}^{4,\tau}_{1;1}$,
\(
\mathrm{\operatorname{Der}}_{\alpha^k}(\mathfrak{g})=\{0\}.
\)
\end{enumerate}

Therefore, the complete $2$-dimensional complex multiplicative
$3$-Hom-Lie superalgebras of superdimension $(1|1)$ are precisely
\(
\mathfrak{g}^{4,\tau}_{1;1},
\
\tau\in\mathbb{C}^{*}.
\)
\end{theorem}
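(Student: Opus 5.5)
The plan is to treat each family of Theorem~\ref{2d-3homlie} by direct computation relative to the homogeneous basis $\{e_0,e_1\}$. Every homogeneous linear map has one of two forms: an even map $D(e_0)=me_0$, $D(e_1)=qe_1$, or an odd map $D(e_0)=pe_1$, $D(e_1)=ne_0$. For each form I will first impose $D\circ\alpha=\alpha\circ D$. The even diagonal maps always commute with the diagonal $\alpha$. For odd maps the condition reads $(a-b)p=(a-b)n=0$. I will then impose the $\alpha^k$-derivation identity on the only basic brackets that can be nonzero, namely $[e_1,e_1,e_0]$ and $[e_1,e_1,e_1]$, together with those forced to vanish by parity and super-skew-symmetry. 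By multilinearity and super-skew-symmetry, the brackets $[e_1,e_1,e_0]$, $[e_1,e_1,e_1]$ and their permutations, plus brackets with two copies of $e_0$, suffice. Since $k\ge1$, every factor $\alpha^k$ carries $a^k$ or $b^k$.

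For case~1 all brackets vanish, so only the commutation condition remains; this gives the stated spaces. For case~2 we have $b=0$. The identity on $[e_1,e_1,e_1]=e_1$ reduces to $D(e_1)=[\alpha^k e_1,\alpha^k e_1,De_1]+\cdots=0$, which forces $q=0$ and, in the odd case, $n=0$. The commutation condition gives $ap=0$. I also need to check that $D(e_0)=pe_1$ creates no constraint: the relevant brackets with $e_0$ vanish, and every term on the right contains a $\alpha^k(e_1)=0$. In cases~3 and~4 we have $\alpha=0$, so every right-hand side vanishes and the identity becomes $D(\text{image of bracket})=0$. In case~3 the image is $\mathbb{C}e_0$, so $D(e_0)=0$ and $D(e_1)$ is free; this gives exactly $m=p=0$ with $q,n$ free. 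In case~4 the image is all of $\mathfrak{g}$, since $\tau\ne0$, so $D=0$. This last point is precisely Proposition~\ref{surcom}, which I will cite.

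For completeness I will combine these spaces with centers and inner derivations. Case~1 is abelian with nonzero center. In case~2, $e_0$ is central because every bracket containing $e_0$ vanishes, so it is not complete. In case~3, $\alpha=0$ forces every inner derivation to vanish, while the nonzero derivation with $q=1$ exists, so it is not complete (its center is also checked but not needed). In case~4 I will verify the center is trivial. If $x=ue_0+ve_1$ is central, then $[e_1,e_1,x]=ue_0+v\tau e_1=0$, so $u=v=0$; by super-skew-symmetry all positional centers coincide. Proposition~\ref{surcom} then gives completeness.

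The main obstacle is bookkeeping of signs in the odd derivation identity for case~2. For the odd map with $D(e_1)=ne_0$, the three terms carry signs $(-1)^{|D|\cdot(\ldots)}$, and $ne_0$ sits in brackets with $\alpha^k(e_1)=b^ke_1=0$. Because $b=0$, every right-hand term in case~2 vanishes whenever $k\ge1$ and one argument is untouched. The identity therefore reduces to $D([e_1,e_1,e_1])=D(e_1)=0$ and $D(e_0)$ unconstrained except by commutation, which sidesteps the sign issue. I would verify this reduction carefully first.
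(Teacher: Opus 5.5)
Your proposal is correct and follows essentially the same route as the paper. You substitute homogeneous even and odd matrices into the commutation relation and the $\alpha^k$-derivation identity, use that for $\alpha=0$ the identity reduces to $D$ vanishing on the image of the bracket (Proposition~\ref{surcom} for $\mathfrak{g}^{4,\tau}_{1;1}$), and settle completeness through the nonzero centers of the first two families and the vanishing of inner derivations for $\mathfrak{g}^{3}_{1;1}$. One small tidy-up for case~2: not every right-hand term contains $\alpha^k(e_1)$ (e.g.\ $[\alpha^k(e_0),\alpha^k(e_0),D(e_1)]$), but each such term still vanishes, because it has entries in $\alpha^k(\mathfrak{g})\subseteq\mathbb{C}e_0$ and every bracket with an $e_0$ entry is zero in $\mathfrak{g}^{2,a}_{1;1}$.
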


\begin{proof}
The abelian case follows from the commutation relation
$D\circ\alpha=\alpha\circ D$.
For each nonabelian algebra, substitute a homogeneous matrix
\[
D_{\bar0}
=
\begin{pmatrix}
m&0\\
0&q
\end{pmatrix},
\qquad
D_{\bar1}
=
\begin{pmatrix}
0&n\\
p&0
\end{pmatrix}
\]
into the $\alpha^k$-derivation identity. This gives the displayed
spaces.
The first two families have a nonzero center. The algebra
$\mathfrak{g}^{3}_{1;1}$ has zero center, but its derivation space is
nonzero. Since $\alpha=0$, it has no nonzero fixed fundamental object,
and hence
\(
\mathrm{Inn}_{\alpha^{k-1}}(\mathfrak{g})=\{0\}.
\)
Finally, for $\mathfrak{g}^{4,\tau}_{1;1}$, the bracket is surjective.
Thus, every $\alpha^k$-derivation vanishes for $k\geq1$. Its center is
zero, and its inner derivation space is zero. Therefore, $\mathfrak{g}^{4,\tau}_{1;1}$ is complete.
\end{proof}

\begin{remark}
Theorem~\ref{2d-3homlie} classifies the superdimension $(1|1)$ case.
No classification of the purely odd superdimension $(0|2)$ case is
claimed here.
\end{remark}

\section{Conclusions}\label{sec5-Conclusions}

In this work, we introduced and studied completeness for multiplicative
\(n\)-ary Hom-Nambu superalgebras. Since a Hom-Nambu bracket need not
be totally super-skew-symmetric, the center was defined as the intersection
of the positional centers. This definition reduces to the usual center in
the \(n\)-Hom-Lie case.

We proved a completeness criterion for multiplicative \(n\)-ary
Hom-Nambu superalgebras with zero twisting map and surjective bracket,
and we exhibited a complete \(n\)-ary Hom-Nambu superalgebra that is
not an \(n\)-Hom-Lie superalgebra. We also established direct-sum results
for complete \(n\)-Hom-Lie superalgebras with bijective twisting maps.

Starting from a multiplicative Hom-Lie superalgebra, we considered the
recursively induced multiplicative \(n\)-ary Hom-Nambu superalgebras
\[
\mathfrak{g}_n
=
\bigl(
\mathfrak{g},
[\cdot,\ldots,\cdot]_n,
\beta_n
\bigr),
\qquad
\beta_n=\alpha^{n-1}.
\]
When \(\alpha\) is surjective, we proved that the center of the original
binary Hom-Lie superalgebra is trivial if and only if the center
\(C(\mathfrak{g}_n)=\bigcap_{i=1}^{n}C_i(\mathfrak{g}_n)\)
of each induced \(n\)-ary Hom-Nambu superalgebra is trivial. We also
recalled that a binary \(\alpha^k\)-derivation satisfies the corresponding
relative derivation identity for the recursively induced bracket. This
identity alone does not imply a complete correspondence between the
derivation spaces of the binary and induced \(n\)-ary structures.

Finally, we determined the complete members in selected low-dimen\-sional
classifications of multiplicative Hom-Lie and \(3\)-Hom-Lie
superalgebras. Further work may investigate additional hypotheses under which
completeness, or the equality of the corresponding derivation and
inner-derivation spaces, is preserved under twisting or recursive
higher-arity constructions.




\section*{Acknowledgements}
\medskip
 \noindent\textbf{Funding}
M. R. Farhangdoost and M. R. Hafezi would like to thank Shiraz University for financial support.
This research was supported by Grant No. 98GRC1M82582, Shiraz University, Shiraz, Iran.


\medskip
\noindent\textbf{Data Availability Statement}
Data sharing is not applicable to this article, as no datasets were generated or analyzed during the current study.

\medskip
\noindent\textbf{Declarations}
\textbf{Conflict of interest} The authors have no conflicts of interest to disclose.



\begin{thebibliography}{99}
\bibitem{AbramovSilvestrov:3homLiealgsigmaderivINvol}
Abramov, V., Silvestrov, S.: $3$-Hom-Lie algebras based on
$\sigma$-derivation and involution. Adv. Appl. Clifford Algebras, 30, 45 (2020)  

\bibitem{AbdaouiMabrMakhl2014ArXiv-CohomlHomLeibnAryHomNambuLieSuperAlg} Abdaoui, K., Mabrouk, S., Makhlouf, A.: Cohomology of Hom-Leibniz and $n$-ary Hom-Nambu-Lie superalgebras. arXiv:1406.3776 [math.RT].\\

\bibitem{ammarmakhlouf2010super}
	Ammar, F., Makhlouf, A.:
	Hom-Lie superalgebras and Hom-Lie admissible superalgebras.
	J. Algebra, 324(7), 1513--1528 (2010)

\bibitem{AmmarMabMakh2011-naryhomrep}
	Ammar, F., Mabrouk, S., Makhlouf, A.:
	Representation and cohomology of $n$-ary multiplicative Hom-Nambu-Lie algebras.
	J. Geom. Phys. 61, 1898--1913 (2011)


\bibitem{ArnlindMkhSilvJMP2010:ternaryHomNambuLieINdHomLie}
Arnlind, J., Makhlouf, A., Silvestrov, S.:
Ternary Hom-Nambu-Lie algebras induced by Hom-Lie algebras. J. Math. Phys. 51, 043515
(2010)
	
\bibitem{arnlindMakhlSilvJMP2011-ConstrnLienHomNambuLie} Arnlind, J., Makhlouf, A., Silvestrov, S.:
Construction of $n$-Lie algebras and $n$-ary Hom-Nambu-Lie algebras. J. Math. Phys. 52, 123502 (2011)
	
\bibitem{akms:ternary}
Arnlind, J., Kitouni, A., Makhlouf, A., Silvestrov, S.: Structure and Cohomology of $3$-Lie Algebras Induced by Lie Algebras. In: Makhlouf, A., Paal, E., Silvestrov, S., Stolin, A. (eds) Algebra, Geometry and Mathematical Physics. Springer Proceedings in Mathematics and Statistics, vol 85, pp. 123--144, Springer, Berlin, Heidelberg (2014) 

\bibitem{armakan2020complete}
	Armakan, A., Razavi, A.:
	Complete Hom-Lie superalgebras.
	Commun. Algebra, 48(2), 651--662 (2020)

\bibitem{ArmakanSilvFarhangTJM2019EnvAlgColHomLie}
	Armakan, A., Silvestrov, S., Farhangdoost, M. R.:
	Enveloping algebras of color Hom-Lie algebras.
	Turk. J. Math. 43(1), 316--339  (2019)
	
\bibitem{ArmakanSilvFarhangGMJ2021ExtHomLieColAlg}
	Armakan, A., Silvestrov, S., Farhangdoost, M. R.:
	Extensions of Hom-Lie color algebras.
	Georgian Math. J. 28(1), 15--27 (2021)

\bibitem{AtMaSi:GenNambuAlg}
Ataguema, H., Makhlouf, A., Silvestrov, S.: Generalization of $n$-ary Nambu
algebras and beyond. J. Math. Phys. 50, 083501 (2009)

\bibitem{awataLiMincYoneyaJHEP2001-quantnambubr}
Awata, H., Li, M., Minic, D., Yoneya, T.: On the quantization of Nambu brackets.
J. High Energy Phys. 2, 013 (2001). DOI:10.1088/1126-6708/2001/02/013

\bibitem{bai2010}
Bai, R., Bai, C., Wang, J.:
Realizations of 3-Lie algebras.
J. Math. Phys. 51, 063505 (2010)
	
\bibitem{bai2011}
	Bai, R., Song, G., Zhang, Y.:
	On classification of $n$-Lie algebras.
	Front. Math. China, 6, 581--606 (2011)

\bibitem{bakayoko2014a}
    Bakayoko, I.:
	Laplacian of Hom-Lie quasi-bialgebras.
	Int. J. Algebra, 8(15), 713--727 (2014)
	
\bibitem{bakayoko2014b}
	Bakayoko, I.:
	L-modules, L-comodules and Hom-Lie quasi-bialgebras.
	African Diaspora J. Math., 17, 49--64 (2014)

\bibitem{BakayokoSilvestrov-SPROMS2020-MultnHomLiecolalg}
Bakayoko, I., Silvestrov, S.: Multiplicative $n$-Hom-Lie color algebras.
In: Silvestrov, S., Malyarenko, A., Ran\v{c}i\'{c}, M. (eds) Algebraic Structures and Applications. SPAS 2017. Springer Proceedings in Mathematics and Statistics, vol 317, pp. 159--187, Springer, Cham, Switzerland (2020).
(arXiv:1912.10216 [math.QA])
	
\bibitem{BakayokoSilv-AfM2021-HomleftSymcolHomtridendColAlg}
	Bakayoko, I., Silvestrov, S.:
	Hom-left-symmetric color dialgebras and Hom-tridendriform color algebras.
	Afr. Mat. 32, 941--958 (2021)

\bibitem{BeitesKaygorodovPopovBMMSS2019-GenDernaryHomOmegacolalg}
Beites, P.D., Kaygorodov, I., Popov, Y.:
Generalized derivations of multiplicative $n$-ary Hom-$\Omega$ color algebras.
Bull. Malays. Math. Sci. Soc. 42, 315--335 (2019)

\bibitem{BenAbdeljElhamdKaygorMakhl201920GenDernBiHomLiealg}
Ben Abdeljelil, A., Elhamdadi, M., Kaygorodov, I., Makhlouf, A.
Generalized derivations of $n$-BiHom-Lie algebras. In:  Silvestrov, S., Malyarenko, A., Ran\v{c}i\'{c}, M. (eds) Algebraic Structures and Applications. Springer Proceedings in Mathematics and Statistics, vol 317, Springer, Cham, 81--97 (2020)
(arXiv:1901.09750 [math.RA])

\bibitem{AbdelkhaderSamiOthmen}
Ben Hassine, A., Mabrouk, S., Ncib, O.: Some constructions of multiplicative $n$-ary hom-Nambu algebras. \emph{Adv. Appl. Clifford Algebr.} \textbf{2019}, \emph{29}, 88.

\bibitem{chen2013}
	Chen, L., Ma, Y., Ni, L.:
	Generalized derivations of Lie color algebras.
	Results Math. 63(3--4), 923--936. (2013)

\bibitem{daletskii1997}
	Daletskii, Y., Takhtajan, L.:
	Leibniz and Lie algebra structures for Nambu algebras.
	Letters in Mathematical Physics, 39, 127--141 (1997)

\bibitem{deazcarraga2010}
	De Azcarraga, J. A., Izquierdo, J. M.:
	$n$-Ary algebras: a review with applications.
	J. Phys. A: Math. Theor., 43, 293001 (2010)
    DOI 10.1088/1751-8113/43/29/293001

\bibitem{guan2017}
	Guan, B., Chen, L., Sun, B.:
	3-Ary Hom-Lie superalgebras induced by Hom-Lie superalgebras.
	Adv. Appl. Clifford Algebras, 27, 3063--3082 (2017)
	
\bibitem{guan2019}
	Guan, B., Chen, L., Sun, B.:
	On Hom-Lie superalgebras.
	Adv. Appl. Clifford Algebras, 29(1), 16 (2019)

\bibitem{hartwiglarssonSilvestrov20032006}
Hartwig, J.T., Larsson, D., Silvestrov, S.D.:
Deformations of Lie algebras using $\sigma$-derivations. J. Algebra, 295(2), 314--361 (2006).  (First published as: Preprints in Mathematical Sciences 2003:32, LUTFMA-5036-2003, Centre for Mathematical Sciences, Lund University, 52 pp, 2003. arXiv:math/0408064 [math.QA])

\bibitem{fan2021}
	Fan, Y., Li, J., Chen, L.:
	Complete Bihom-Lie superalgebras and derivation superalgebras.
	Commun. Algebra, 49(5), 1925--1937 (2021)

\bibitem{farhangdoost2023}
	Farhangdoost, M.R., Attari Polsangi, A.R., Silvestrov, S.:
	Simply complete Hom-Lie superalgebras and decomposition of complete Hom-Lie superalgebras. Adv. Appl. Clifford Algebras, 33, 17 (2023)

\bibitem{filippov1985}
Filippov, V. T.: $n$-Lie algebras. Siberian Mathematical Journal, 26, 879--891 (1985)

\bibitem{kaygorodov2012}
	Kaygorodov, I.:
	On $\delta$-Derivations of $n$-ary algebras.
	Izv. Math., 76(5), 1150--1162 (2012)
	
\bibitem{kaygorodov2016}
	Kaygorodov, I., Popov, Y.:
	Generalized derivations of (color) $n$-ary algebras.
    Linear and Multilinear Algebra, 64(6), 817--837 (2016)

\bibitem{km:nary}
Kitouni, A., Makhlouf, A.: On structure and central extensions of $(n+1)$-Lie algebras induced by $n$-Lie algebras. arXiv:1405.5930 [math.RA] (2014)

\bibitem{KitMakSil-GMJ2016nhominduced}
Kitouni, A., Makhlouf, A., Silvestrov, S.:
	On $(n+1)$-Hom-Lie algebras induced by $n$-Hom-Lie algebras.
	Georgian Math. J., 23(1), 75--95 (2016). (arXiv:1504.06980 [math.RA])

\bibitem{kms:narygenBiHomLieBiHomassalgebras2020}
Kitouni, A.; Makhlouf, A.; Silvestrov, S. On $n$-ary generalization of BiHom-Lie algebras and BiHom-associative algebras. In: Silvestrov, S., Malyarenko, A., Ran\v{c}i\'{c}, M., (eds) Algebraic Structures and Applications. Springer Proceedings in Mathematics and Statistics, vol 317, Springer, Cham, 99--126 (2020)

\bibitem{kms:solvnilpnhomlie2020}
Kitouni, A.; Makhlouf, A.; Silvestrov, S. On solvability and nilpotency for $n$-Hom-Lie algebras and $(n+1)$-Hom-Lie algebras induced by $n$-Hom-Lie algebras. In: Silvestrov, S., Malyarenko, A., Ran\v{c}i\'{c}, M., (eds) Algebraic Structures and Applications.
Springer Proceedings in Mathematics and Statistics, vol 317, Springer, Cham, 127--157 (2020)

\bibitem{kitmbongsil-2023NAART:IdlsDerCenDescSerNaryHomAlg} Kitouni, A., Mboya, S., Ongong’a, E., Silvestrov, S.: On Ideals and Derived and Central Descending Series of $n$-ary Hom-Algebras. In: Albuquerque, H., Brox, J., Mart{\'i}nez, C., Saraiva, P. (eds) Non-Associative Algebras and Related Topics. NAART 2020. Springer Proceedings in Mathematics and Statistics, vol 427, Springer, Cham. 261–286, (2023)

\bibitem{cln1dimnHomLieniltw}
Kitouni, A.; Silvestrov, S. On Classification of $(n+1)$-dimensional $n$-Hom-Lie algebras with nilpotent twisting maps. In: Silvestrov, S., Malyarenko, A. (eds) Non-commutative and Non-associative Algebra and Analysis Structures. SPAS 2019. Springer Proceedings in Mathematics and Statistics, vol 426, Springer, Cham. 525--562 (2023) 

\bibitem{LarssonSilv2005:QuasiLieAlg}
Larsson, D., Silvestrov, S. D.: Quasi-Lie algebras. In: Fuchs, J., Mickelsson, J., Rozenblioum, G., Stolin, A., Westerberg, A. (eds),
Noncommutative Geometry and Representation Theory in Mathematical Physics. Contemp. Math. \textbf{391}, Amer. Math. Soc., Providence, RI, 241-248 (2005) (First published as: Preprints in Mathematical Sciences 2004:30, LUTFMA-5049-2004, Centre for Mathematical Sciences, Lund University (2004))

\bibitem{LarssonSilv:GradedquasiLiealg}
Larsson, D., Silvestrov, S. D.: Graded quasi-Lie algebras. Czech. J. Phys. 55(11),
1473--1478 (2005) 

\bibitem{LarssonSigSilvJGLTA2008:QuasiLiedefFttN}
Larsson, D., Sigurdsson, G., Silvestrov, S. D.: Quasi-Lie deformations on the algebra $\mathbb{F}[t]/(t^N)$. J. Gen. Lie Theory Appl. \textbf{2}(3), 201-205 (2008)

\bibitem{LarssonSilvJA2005:QuasiHomLieCentExt2cocyid}
Larsson, D., Silvestrov, S. D.: Quasi-Hom-Lie algebras, central extensions and $2$-cocycle-like identities. J. Algebra, \textbf{288}, 321-344 (2005) (First published as: Preprints in Mathematical Sciences 2004:3, LUTFMA-5038-2004, Centre for Mathematical Sciences, Department of Mathematics, Lund Institute of Technology, Lund University (2004))

\bibitem{LarssonSilv20042005CA2007:quasidefsl2}
Larsson, D., Silvestrov, S. D.: Quasi-deformations of $sl_2(\mathbb{F})$ using twisted derivations. Comm. Algebra, \textbf{35}(12), 4303--4318 (2007)
(First published as:
Preprints in Mathematical Sciences 2004:26, LUTFMA-5047-2004, Centre for Mathematical Sciences, Lund Institute of Technology, Lund University (2004).
arXiv:math/0506172 [math.RA] (2005))

\bibitem{leger2000}
Leger, G., Luks, E.: Generalized derivations of Lie algebras.
J. Algebra, 228(1), 165--203 (2000)

\bibitem{ma2017a}
Ma, T., Makhlouf, A., Silvestrov, S.: Curved $\mathcal O$-operator systems. Commun. Algebra, 52(12), 5186–5202 (2024)
(First published as: arXiv:1710.05232 [math.RA] (2017))
	
\bibitem{ma2017b}
Ma, T., Makhlouf, A., Silvestrov, S.:
Rota-Baxter bisystems and covariant bialgebras. arXiv:1710.05161 [math.RA]

\bibitem{MaMkhlSilv2021} Ma, T., Makhlouf, A., Silvestrov, S.:
Rota–Baxter cosystems and coquasitriangular mixed bialgebras. J. Algebra Appl. \textbf{20}(04), 2150064 (2021)

\bibitem{MabroukNcibSilvAACA2021-GenDerRotaBaxterOpsnaryHomNambuSuperal}
Mabrouk, S., Ncib, O., Silvestrov, S.: Generalized derivations and Rota-Baxter operators of
$n$-ary Hom-Nambu superalgebras. Adv. Appl. Clifford Algebras \textbf{31}, 32 (2021) (arXiv:2003.01080[math.QA])\\

\bibitem{MakhlSilv2006GLTA2008-Homalgstr}
Makhlouf, A., Silvestrov, S. D.:
Hom-algebra structures. J. Gen. Lie Theory Appl. \textbf{2}(2), 51--64 (2008).
(First published as:
Preprints in Mathematical Sciences  2006:10, LUTFMA-5074-2006, Centre for Mathematical Sciences,
Lund University (2006))
	
\bibitem{MakhSilv2007FM2010-NotesformdefHomassHomLie}
Makhlouf, A., Silvestrov, S.: Notes on $1$-parameter formal deformations of Hom-associative and Hom-Lie algebras, Forum Math. \textbf{22}(4), 715-739 (2010)
(First published as: Preprints in Mathematical Sciences, 2007:31, LUTFMA-5095-2007, Centre for Mathematical Sciences,
Lund University, (2007). arXiv:0712.3130v1 [math.RA])

\bibitem{MakhSil200709HomHopfGLTBSpringer} Makhlouf, A., Silvestrov, S.:
Hom-Lie admissible Hom-coalgebras and Hom-Hopf algebras,
In: Silvestrov, S., Paal, E., Abramov, V., Stolin, A. (Eds.),
Generalized Lie Theory in Mathematics, Physics and Beyond, Springer-Verlag, Berlin, Heidelberg, Ch. 17, 189-206 (2009) (First published as: Preprints in Mathematical Sciences, Lund University, Centre for Mathematical Sciences, Centrum Scientiarum Mathematicarum (2007:25) LUTFMA-5091-2007. arXiv:0709.2413 [math.RA] (2007))

\bibitem{MakhSilvJAA2010HomAlgHomCoalg} Makhlouf, A., Silvestrov, S. D.:
Hom-algebras and Hom-coalgebras, J. Algebra Appl. \textbf{9}(4), 553-589 (2010).
(First published as: Preprints in Mathematical Sciences, Lund University, Centre for Mathematical Sciences, Centrum Scientiarum Mathematicarum, (2008:19) LUTFMA-5103-2008. arXiv:0811.0400
[math.RA] (2008)) 


\bibitem{makhlouf2009}
Makhlouf, A.:
Paradigm of Nonassociative Hom-algebras and Hom-superalgebras. In: Carmona Tapia, J., Morales Campoy, A., Peralta Pereira, A. M., Ram{\'i}rez {\'A}lvarez M. I. (eds), Proceedings of Jordan Structures in Algebra and Analysis Meeting, Publishing house: Circulo Rojo, 145--177 (2010)

\bibitem{nambu1973}
Nambu, Y.:
Generalized Hamiltonian dynamics. Phys. Rev. D, \textbf{7}(8), 2405--2412 (1973)

\bibitem{RichardSilvJA2008:quasiLiesigderCtpm1}
Richard, L., Silvestrov, S. D.: Quasi-Lie structure of $\sigma$-derivations of $\mathbb{C}[t^{\pm1}]$, J. Algebra, \textbf{319}(3), 1285-1304 (2008)

\bibitem{RichardSilvestrovGLTMPBSpr2009:QuasiLieHomLiesigmaderiv}
Richard, L., Silvestrov, S.:
A Note on Quasi-Lie and Hom-Lie structures of $\sigma$-derivations of $\mathbb{C}[z_1^{\pm1},\dots,z_n^{\pm1}]$. In: Silvestrov, S., Paal, E., Abramov, V., Stolin, A. (eds), Generalized Lie Theory in Mathematics, Physics and Beyond. Springer-Verlag, Berlin, Heidelberg, Ch. 22, 257-262 (2009)

\bibitem{SerganovaVaintrob2026}
V.~Serganova and A.~Vaintrob,
\emph{Almost inner derivations of Lie superalgebras},
J. Algebra \textbf{692} (2026), 1--26.

\bibitem{SigSilv:CzechJP2006:GradedquasiLiealgWitt}
Sigurdsson, G., Silvestrov, S.: Graded quasi-Lie algebras of Witt type. Czechoslovak J. Phys. \textbf{56}, 1287-1291 (2006)

\bibitem{SigSilv:GLTbdSpringer2009}
Sigurdsson, G., Silvestrov, S.: Lie color and Hom-Lie algebras of Witt type and their central extensions. In: Silvestrov, S., Paal, E., Abramov, V., Stolin, A. (Eds.), Generalized Lie Theory in Mathematics, Physics and Beyond, Springer-Verlag, Berlin, Heidelberg, Ch. 21, 247-255 (2009)
	
\bibitem{takhtajan1994}
Takhtajan, L. A.:
On the foundation of generalized Nambu mechanics.
Commun. Math. Phys. \textbf{160}(2), 295--315 (1994)
		
\bibitem{takhtajan1995}
	Takhtajan, L. A.:
	Higher order analog of the Chevalley-Eilenberg complex.
	St. Petersburg Math. J. \textbf{6}(2), 429--438 (1995)

\bibitem{VainermanKerner-$n$-ary}
Vainerman, L., Kerner R.: On special classes of $n$-algebras. J. Math. Phys. \textbf{37} (5), 2553-2565 (1996)

\bibitem{wang2016}
	Wang, C., Zhang, Q., Wei, Z.:
	A classification of low-dimensional multiplicative Hom-Lie superalgebras.
	Open Math. \textbf{14}, 613--628 (2016)	

\bibitem{yau2008}
Yau, D.:
Enveloping algebras of Hom-Lie algebras.
J. Gen. Lie Theory Appl. \textbf{2}(2), 95--108 (2008)
	
\bibitem{yau2009}
Yau, D.: Hom-algebras and homology.
\textit{Journal of Lie Theory}, \textbf{19}(2), 409--421 (2009)
	
\bibitem{yau2011}
Yau, D.:
The Hom-Yang-Baxter equation and Hom-Lie algebras.
J. Math. Phys. \textbf{52}(5), 053502 (2011)

\bibitem{YauGenCom}
Yau, D.: A Hom-associative analogue of Hom-Nambu algebras. arXiv:1005.2373 [math.RA] (2010)

\bibitem{YauHomNambuLie}
Yau, D.: On $n$-ary Hom-Nambu and Hom-Nambu-Lie algebras. J. Geom. Phys. \textbf{62}(2), 506--522 (2012)

\bibitem{Yuan2012:HomLiecoloralgstr} Yuan, L.: Hom-Lie color algebra structures, Commun. Algebra, \textbf{40}, 575-592 (2012)

\bibitem{zhang2010}
	Zhang, R., Zhang, Y.:
	Generalized derivations of Lie superalgebras.
	Commun. Algebra, \textbf{38}(10), 3737--3751 (2010)	

\end{thebibliography}
\end{document}